\numberwithin{equation}{section}
\newtheorem{theorem}{Theorem}[section]
\newtheorem{lemma}[theorem]{Lemma}
\newtheorem{proposition}[theorem]{Proposition}
\newtheorem*{hypothesis}{Radial Hypothesis}
\theoremstyle{definition}
\newtheorem{remark}[theorem]{Remark}
\newtheorem*{acknowledgement}{Acknowledgement}
\newcommand{\C}{\mathbb{C}}
\newcommand{\R}{\mathbb{R}}
\newcommand{\Z}{\mathbb{Z}}
\newcommand{\N}{\mathbb{N}}
\newcommand{\ud}{\mathrm{d}}
\begin{document}

\title[Variable coefficient Wolff-type inequalities]{Variable coefficient Wolff-type inequalities and sharp local smoothing estimates for wave equations on manifolds}

\author[D. Beltran, J. Hickman and C. D. Sogge]{David Beltran, Jonathan Hickman and Christopher D. Sogge}

\address{BCAM - Basque Center for Applied Mathematics, Alameda de Mazarredo 14, 48009 Bilbao, Spain.}
\email{dbeltran@bcamath.org}

\address{Eckhart Hall Room 414, Department of Mathematics, University of Chicago, 5734 S. University Avenue, Chicago, IL 60637, USA.}
\curraddr{Mathematical Institute, North Haugh, St Andrews, Fife, KY16 9SS.}

\email{jeh25@st-andrews.ac.uk}

\address{Department of Mathematics, Johns Hopkins University, Baltimore, MD 20193, USA.}
\email{sogge@jhu.edu}

\subjclass[2010]{Primary: 35S30, Secondary: 35L05}
\keywords{Local smoothing, variable coefficient, Fourier integral operators, decoupling inequalities.}


\thanks{ The first author was supported by the ERC Grant 307617,  the ERCEA Advanced Grant 2014 669689 - HADE, the MINECO project MTM2014-53850-P, the Basque Government project IT-641-13, the Basque Government through the BERC 2014-2017 program, by Spanish Ministry of Economy and Competitiveness MINECO: BCAM Severo Ochoa excellence accreditation SEV-2013-0323, and an IMA small grant. This material is based upon work supported by the National Science Foundation under Grant No. DMS-1440140 while the second author was in residence at the Mathematical Sciences Research Institute in Berkeley, California, during the Spring 2017 semester. The third author was supported by NSF Grant DMS 1665373.}

\begin{abstract} The sharp Wolff-type decoupling estimates of Bourgain--Demeter are extended to the variable coefficient setting. These results are applied to obtain new sharp local smoothing estimates for wave equations on compact Riemannian manifolds, away from the endpoint regularity exponent. More generally, local smoothing estimates are established for a natural class of Fourier integral operators; at this level of generality the results are sharp in odd dimensions, both in terms of the regularity exponent and the Lebesgue exponent. 
\end{abstract}

\maketitle




\section{Introduction and statement of results}




\subsection{Local smoothing estimates}

Let $n \geq 2$ and $(M, g)$ be a smooth,\footnote{In view of the methods of the present article it is convenient to work in the $C^{\infty}$ category, but the forthcoming definitions and questions certainly make sense at lower levels of regularity.} compact $n$-dimensional Riemannian manifold with associated Laplace--Beltrami operator $\Delta_g$. Given initial data $f_0, f_1 \colon M \to \C$ belonging to some \emph{a priori} class, consider the Cauchy problem
\begin{equation}\label{wave equation}
\left\{\begin{array}{l}
(\partial_t^2 - \Delta_g) u = 0 \\[5pt]
u(\,\cdot\,,0) = f_0, \qquad \partial_t u (\,\cdot\,,0) = f_1.
\end{array}\right. 
\end{equation}
It was shown, \emph{inter alia}, in \cite[Theorem 4.1]{Seeger1991} that for each fixed time $t$ and $1 < p < \infty$ the solution $u$ satisfies\footnote{Given a (possibly empty) list of objects $L$, for real numbers $A_{s,p}, B_{s,p} \geq 0$ depending on some Lebesgue exponent $p$ and/or regularity exponent $s$ the notation $A_{s,p} \lesssim_L B_{s,p}$ or $B_{s,p} \gtrsim_L A_{s,p}$ signifies that $A_{s,p} \leq CB_{s,p}$ for some constant $C = C_{L,n,p,s} \geq 0$ depending only on the objects in the list, $n$, $p$ and $s$. In such cases it will also be useful to sometimes write $A_{s,p} = O_L(B_{s,p})$. In addition, $A_{s,p} \sim_L B_{s,p}$ is used to signify that $A_{s,p} \lesssim_L B_{s,p}$ and $A_{s,p} \gtrsim_L B_{s,p}$.}
\begin{equation}\label{fixed time estimate}
\|u(\,\cdot\, , t)\|_{L^p_{s - \bar{s}_p}(M)} \lesssim_{M,g} \|f_0\|_{L^p_{s}(M)} + \|f_1\|_{L^p_{s-1}(M)}
\end{equation}
for all $s \in \R$ where $\bar{s}_p := (n-1)|1/2 - 1/p|$. Here $L^p_{s}(M)$ denotes the standard Sobolev (or Bessel potential) space on $M$ with Lebesgue exponent $p$ and $s$ derivatives; the relevant definitions are recalled in $\S$3 below. Moreover, provided $t$ avoids a discrete set of times, the estimate \eqref{fixed time estimate} is sharp for all $1 < p < \infty$ in the sense that one cannot replace $\bar{s}_p$ with $\bar{s}_p - \sigma$ for any $\sigma > 0$.

The purpose of this article is to prove sharp \emph{local smoothing} estimates for the solution $u$ for a partial range of $p$, which demonstrate a gain in regularity for space-time estimates over the fixed-time case.

\begin{theorem}\label{local smoothing theorem} If $u$ is the solution to the Cauchy problem \eqref{wave equation} and $\bar{p}_n \leq p < \infty$ where $\bar{p}_n := \tfrac{2(n+1)}{n-1}$, then
\begin{equation}\label{local smoothing estimate}
\Big(\int_1^2\|u(\,\cdot\,,t)\|_{L^p_{s - \bar{s}_p + \sigma}(M)}^p\,\ud t\Big)^{1/p} \lesssim_{M,g} \|f_0\|_{L^p_{s}(M)} + \|f_1\|_{L^p_{s-1}(M)}
\end{equation}
holds for all $s \in \R$ and all $\sigma < 1/p$.
\end{theorem}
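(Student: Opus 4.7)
The plan is to reduce the estimate on $(M,g)$ to a frequency-localized statement for a class of Fourier integral operators (FIO) and then to invoke the variable coefficient decoupling inequality that the title of the paper advertises. I proceed in four steps.

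\emph{Step 1: Reduction to an FIO parametrix.} By Hadamard/H\"ormander parametrix constructions, the half-wave propagators $e^{\pm i t\sqrt{-\Delta_g}}$ are, microlocally and modulo smoothing operators, FIOs of order $0$ whose canonical relations are parametrized by the null bicharacteristic flow of the wave operator. Using a finite partition of unity on $M$ and a microlocal cut-off, it suffices to prove the estimate \eqref{local smoothing estimate} for oscillatory integral operators of the form
\[
T f(x,t) = \int_{\R^n} e^{i \varphi(x,t;\xi)} a(x,t;\xi) \widehat f(\xi) \, \mathrm{d}\xi,
\]
where $\varphi$ is $1$-homogeneous in $\xi$ and satisfies the cone condition (the $\xi$-Hessian of $\varphi$ at fixed $(x,t)$ has rank $n-1$, corresponding to a light cone in the cotangent fibre).

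\emph{Step 2: Littlewood--Paley reduction and rescaling.} Apply a Littlewood--Paley decomposition $f = \sum_{\lambda \,\mathrm{dyadic}} f_\lambda$. For $p \geq 2$ the square function inequality reduces matters to an $\ell^p$-sum of dyadic pieces, so it is enough to show, for each $\lambda = 2^k$,
\[
\|T f_\lambda\|_{L^p(M \times [1,2])} \lesssim_\varepsilon \lambda^{\bar{s}_p - 1/p + \varepsilon} \|f_\lambda\|_{L^p(M)},
\]
with an arbitrarily small $\varepsilon > 0$. Rescaling space-time by $\lambda^{-1}$, this becomes a unit-frequency estimate for an FIO whose phase has $1$-scale oscillation and whose canonical relation still satisfies the cone (cinematic curvature) condition uniformly in $\lambda$.

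\emph{Step 3: Variable coefficient decoupling.} Cover the $\lambda$-frequency annulus by $\sim \lambda^{(n-1)/2}$ plates of dimensions $\sim 1 \times \lambda^{1/2} \times \cdots \times \lambda^{1/2} \times \lambda$ (a $\lambda^{-1/2}$ angular decomposition about the cone direction) and apply the variable coefficient Wolff-type $\ell^p$ decoupling inequality of the paper to the pieces $T f_{\lambda,\theta}$ associated with each such plate $\theta$. This gives
\[
\|T f_\lambda\|_{L^p} \lesssim_\varepsilon \lambda^{\varepsilon} \lambda^{(n-1)(1/2 - 1/p) - 1/p} \Bigl(\sum_\theta \|T f_{\lambda,\theta}\|_{L^p}^p\Bigr)^{1/p},
\]
at the critical exponent $p=\bar p_n$, with the same inequality (with improved powers) for $\bar p_n \leq p < \infty$ by interpolation with the trivial $L^\infty$ bound and H\"ormander's fixed-time $L^p$ bound on each plate.

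\emph{Step 4: Closing the estimate.} On each $\lambda^{-1/2}$-plate the operator $T$ is essentially a single-scale FIO and a direct energy (or fixed-time) estimate together with Minkowski's inequality bounds $\|T f_{\lambda,\theta}\|_{L^p}$ by a constant times $\|f_{\lambda,\theta}\|_{L^p}$. Summing over $\theta$ using orthogonality/Plancherel in the $L^2$ direction and interpolating recovers the right-hand side of the desired inequality, with an $O(\lambda^\varepsilon)$ loss that is absorbed into the strict gap $\sigma < 1/p$. Finally, reassembling the dyadic pieces via Littlewood--Paley yields \eqref{local smoothing estimate}.

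The main obstacle is Step 3: in the Euclidean case Bourgain--Demeter decoupling exploits translation invariance and Plancherel on the paraboloid/cone, while here the phase $\varphi$ is genuinely variable and one must run the entire bilinear-to-linear induction-on-scales scheme with a parabolic rescaling that respects the curved cone, controlling error terms from the non-polynomial part of $\varphi$ at each iteration. This is where the cinematic curvature hypothesis on $\varphi$ is essential, and it is what the promised variable-coefficient Wolff-type inequality is designed to deliver.
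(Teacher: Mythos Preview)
Your outline is essentially correct and follows the same route as the paper: reduce to a local FIO of the form $\mathcal{F}f(x,t)=\int e^{i\phi(x,t;\xi)}b(x,t;\xi)\hat f(\xi)\,\mathrm{d}\xi$ via the parametrix, Littlewood--Paley decompose, apply the variable-coefficient $\ell^p$ decoupling inequality at scale $\lambda$, and then ``recouple'' by interpolation between $L^2$ (H\"ormander/Plancherel/orthogonality of the plates) and $L^\infty$.

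Two points are worth tightening. First, in Step~4 the $L^\infty$ endpoint is not a fixed-time or energy bound but a uniform $L^1$ kernel estimate $\sup_{(x,t)}\|K_\theta^\lambda(x,t;\cdot)\|_{L^1}\lesssim 1$ for each plate, obtained by stationary phase using the homogeneity of $\phi$ and the $\lambda^{-1/2}$ angular localization (this is the Seeger--Sogge--Stein mechanism). The per-plate bound $\|Tf_{\lambda,\theta}\|_{L^p}\lesssim\|f_{\lambda,\theta}\|_{L^p}$ alone does not suffice, since $\bigl(\sum_\theta\|f_{\lambda,\theta}\|_{L^p}^p\bigr)^{1/p}$ is not controlled by $\|f_\lambda\|_{L^p}$ for $p>2$; one really needs the interpolation between the two endpoints above.

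Second, your final paragraph anticipates rerunning the full bilinear-to-linear Bourgain--Demeter induction with a variable phase. This is a reasonable expectation, but the paper proceeds differently and more economically: it proves an approximation lemma showing that on any ball of radius $K\le\lambda^{1/2}$ the operator $T^\lambda$ is well approximated by a \emph{constant-coefficient} extension operator $E_{\bar z}$ (associated to the cone $\partial_z\phi(\bar z;\cdot)$), applies the flat Bourgain--Demeter theorem to $E_{\bar z}$ as a black box to gain a small power of $K$, and then propagates this gain through all scales via a parabolic rescaling lemma and an induction on $R$. No multilinear machinery is redone in the variable-coefficient setting.
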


For the given range of $p$ this result is sharp up to the endpoint in the sense that the inequality fails if $\sigma > 1/p$.\footnote{Such inequalities are also conjectured to hold at the endpoint (that is, the case $\sigma = 1/p$) and endpoint estimates have been obtained for a further restricted range of $p$ in high-dimensional cases: see \cite{Heo2011} and \cite{Lee2013}.} It is likely, however, that the range of $p$ is not optimal. For instance, Minicozzi and the third author \cite{Minicozzi1997} (see also \cite{Sogge}) found specific manifolds for which \eqref{local smoothing estimate} can hold for all $\sigma < 1/p$ only if $p \geq \frac{2(3n+1)}{3n-3}$ for $n$ odd or $p \geq \frac{2(3n+2)}{3n-2}$ for $n$ even; it is not unreasonable to speculate that these necessary conditions should, for general $M$, be sufficient.\footnote{The examples in \cite{Minicozzi1997} concern certain oscillatory integral operators of Carleson--Sj\"olin type, defined with respect to the geodesic distance on $M$. Their results lead to counterexamples for local smoothing estimates via a variant of the well-known implication ``local smoothing $\Rightarrow$ Bochner--Riesz". Implications of this kind will be discussed in detail in $\S$\ref{section: sharp examples}.} The examples of \cite{Minicozzi1997} rely on Kakeya compression phenomena for families of geodesics; the (euclidean) Kakeya conjecture, if valid, would preclude such behaviour over $\R^n$. Indeed, the \emph{local smoothing conjecture} for the wave equation \cite{Sogge1991} asserts that in the euclidean case the estimate \eqref{local smoothing estimate} should hold for all $\sigma < 1/p$ in the larger range $\tfrac{2n}{n-1} \leq p < \infty$. If true, this would be a remarkable result, not least because the conjecture formally implies many other major open problems in harmonic analysis (including the Bochner--Riesz, Fourier restriction and Kakeya conjectures): see \cite{Tao1999}.

It is well known (see, for instance, \cite[Chapter 5]{Duistermaat2011} or \cite[Chapter 4]{Sogge2017}) that the solution $u$ to the Cauchy problem \eqref{wave equation} is given by
\begin{equation}\label{solution formula}
u(x,t) = \mathcal{F}_{0}f_0(x,t) + \mathcal{F}_{1}f_1(x,t) 
\end{equation}
where, using the language of \cite{Hormander1971} and \cite{Mockenhaupt1993}, each $\mathcal{F}_{j} \in I^{j-1/4}(M \times \R, M; \mathcal{C})$ is a Fourier integral operator (FIO) with canonical relation $\mathcal{C}$ satisfying the cinematic curvature condition (the relevant definitions will be recalled below in $\S$\ref{section: proof of local smoothing}; see also \cite{BHS} for a comprehensive introduction to FIOs in the context of local smoothing.). In local coordinates, such operators $\mathcal{F}_j$ adopt the explicit form \eqref{explicit FIO} below with $\mu=j$. Theorem~\ref{local smoothing theorem} follows from a more general result concerning Fourier integral operators. 

\begin{theorem}\label{FIO local smoothing theorem} Let $n \geq 2$ and let $Y$ and $Z$ be precompact manifolds of dimensions $n$  and $n+1$, respectively. Suppose that $\mathcal{F} \in I^{\mu-1/4}(Z, Y; \mathcal{C})$ where the canonical relation $\mathcal{C}$ satisfies the cinematic curvature condition. If $\bar{p}_n \leq p < \infty$, then 
$$\|\mathcal{F} f\|_{L^p_{\mathrm{loc}}(Z)} \lesssim \| f \|_{L^p_{\mathrm{comp}}(Y)}$$
holds whenever $\mu < -\bar{s}_p + 1/p$. 
\end{theorem}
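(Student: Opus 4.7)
The plan is to combine standard reductions for Fourier integral operators with the variable-coefficient Wolff-type decoupling inequality---the principal analytic input of the paper---followed by a single-cap bound and Riesz--Thorin interpolation. In local coordinates $\mathcal{F}$ admits an oscillatory integral representation with amplitude $a \in S^\mu$ of compact support in the base variable and phase $\phi$ positively homogeneous of degree one in $\xi$ satisfying the cinematic curvature condition. Decomposing $a$ dyadically in $|\xi|$ yields $\mathcal{F} = \sum_{\lambda \geq 1} \mathcal{F}_\lambda$ with $\mathcal{F}_\lambda$ frequency-localized at scale $|\xi| \sim \lambda$, and the theorem reduces to proving, at the critical exponent $p = \bar{p}_n$ and for every $\epsilon > 0$, the single-scale estimate
\[
\|\mathcal{F}_\lambda f\|_{L^{\bar{p}_n}(Z)} \lesssim_\epsilon \lambda^{\mu + \bar{s}_{\bar{p}_n} - 1/\bar{p}_n + \epsilon} \|f\|_{L^{\bar{p}_n}(Y)}.
\]
Interpolating this with the trivial endpoint $\|\mathcal{F}_\lambda\|_{L^\infty \to L^\infty} \lesssim \lambda^{\mu + (n-1)/2}$ (a consequence of the fixed-time Seeger--Sogge--Stein theorem at $p = \infty$) gives the analogous bound with exponent $\mu + \bar{s}_p - 1/p + \epsilon$ for every $p \in [\bar{p}_n, \infty)$, which is then summable in $\lambda$ under the strict inequality $\mu < -\bar{s}_p + 1/p$.

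To prove the single-scale estimate at $p = \bar{p}_n$, decompose $\mathcal{F}_\lambda = \sum_\theta \mathcal{F}_\lambda^\theta$ via a smooth partition of unity in $\xi/|\xi|$ at angular scale $\lambda^{-1/2}$; there are $N \sim \lambda^{(n-1)/2}$ caps. The cinematic curvature hypothesis ensures each single-cap piece $\mathcal{F}_\lambda^\theta$ produces output essentially localized on a curved plate of dimensions $1 \times \lambda^{-1/2} \times \cdots \times \lambda^{-1/2}$ in $Z$, so the angular decomposition targets precisely the cone-adapted atoms for which the variable-coefficient $\ell^2$ Wolff-type decoupling inequality of the paper applies, yielding
\[
\|\mathcal{F}_\lambda f\|_{L^{\bar{p}_n}(Z)} \lesssim_\epsilon \lambda^\epsilon \bigg( \sum_\theta \|\mathcal{F}_\lambda^\theta f\|_{L^{\bar{p}_n}(Z)}^2 \bigg)^{1/2}.
\]
The $\ell^2$-summation is essential here: the variable-coefficient analogue of a converging spherical wave (a coherent sum of plates meeting at a single space-time point) saturates the $\ell^2$-version but refutes any $\ell^{\bar{p}_n}$-summed statement, and this example is what forces the loss $\bar{s}_p - 1/p$ in the final bound.

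It remains to estimate each single-cap term. For one cap, the phase $\phi(\cdot,\xi)$ is affine in $\xi$ up to a quadratic correction of the right scale; after a parabolic rescaling straightening the cap, $\mathcal{F}_\lambda^\theta$ becomes a bounded $L^{\bar{p}_n}$-operator whose only growth comes from the $\lambda^\mu$ size of the amplitude, giving
\[
\|\mathcal{F}_\lambda^\theta f\|_{L^{\bar{p}_n}(Z)} \lesssim \lambda^\mu \|\chi_\theta(D) f\|_{L^{\bar{p}_n}(Y)}.
\]
Combining this with the decoupling and the vector-valued estimate $\big( \sum_\theta \|\chi_\theta(D) f\|_{L^{\bar{p}_n}}^2 \big)^{1/2} \lesssim N^{1/2 - 1/\bar{p}_n} \|f\|_{L^{\bar{p}_n}}$---obtained by Riesz--Thorin interpolation between the $L^2$-Plancherel isometry $\sum \|\chi_\theta(D) f\|_2^2 = \|f\|_2^2$ and the uniform boundedness of each individual angular multiplier on $L^\infty$---yields the single-scale bound because a direct computation shows $N^{1/2 - 1/\bar{p}_n} = \lambda^{(n-1)(1/2 - 1/\bar{p}_n)/2} = \lambda^{(n-1)/(2(n+1))} = \lambda^{\bar{s}_{\bar{p}_n} - 1/\bar{p}_n}$.

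The main technical obstacle is the variable-coefficient decoupling inequality itself, whose proof must overcome the non-flat geometry of the canonical relation. The expected strategy is a multi-scale induction in which, at each stage, a parabolic rescaling adapted to the cinematic cone approximates $\phi$ by a quadratic model phase on a small physical ball; on the rescaled ball the operator is close enough to a flat cone extension operator that the Euclidean Bourgain--Demeter decoupling applies, and one transfers the estimate back through the rescaling and iterates down to scale $\lambda^{-1}$. Secondary difficulties include preserving the cinematic curvature condition under successive rescalings, controlling amplitude behaviour across many scales, and absorbing the lower-order terms arising in the FIO calculus.
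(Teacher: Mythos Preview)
Your overall architecture---localise, dyadically decompose, apply the variable-coefficient decoupling theorem, then recouple via an interpolated single-cap bound---matches the paper's proof of Proposition~3.1 and Lemma~3.2. Two organisational differences are worth noting. First, the paper's Theorem~1.4 is an $\ell^p$ (not $\ell^2$) decoupling statement with constant $\lambda^{\alpha(p)+\varepsilon}$, where $\alpha(p)=\bar s_p-1/p$ for $p\ge\bar p_n$; the paper applies this directly at every $p\ge\bar p_n$ and then uses the $\ell^p$-summed recoupling lemma $\big(\sum_\theta\|\mathcal F^\lambda_\theta f\|_p^p\big)^{1/p}\lesssim\lambda^\mu\|f\|_p$, proved by interpolating the $L^2$ H\"ormander bound against the $L^\infty$ kernel estimate. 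You instead invoke an $\ell^2$ decoupling with constant $\lambda^\varepsilon$ at the single exponent $\bar p_n$ and push the $\lambda^{\alpha(\bar p_n)}$ loss into the $\ell^2$ recoupling, then interpolate the resulting single-scale bound with the trivial $L^\infty$ endpoint to reach all $p$. The two routes are equivalent in effect, but be aware that the $\ell^2$ formulation you cite is not the statement of Theorem~1.4; it is a parallel result the paper says its method would equally prove, and in any case your argument goes through verbatim if you replace the $\ell^2$ decoupling by the paper's $\ell^p$ decoupling and your $\ell^2$ recoupling by Lemma~3.2. Second, your parenthetical remark that the focusing example ``refutes any $\ell^{\bar p_n}$-summed statement'' should be read as refuting such a statement \emph{with constant $\lambda^\varepsilon$}; the $\ell^{\bar p_n}$ decoupling with the larger constant $\lambda^{\alpha(\bar p_n)+\varepsilon}$ is precisely what the paper proves and uses.
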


An interesting feature of Theorem~\ref{FIO local smoothing theorem} is that both the restriction on $\mu$ and the restriction on $p$ is sharp in certain cases. 

\begin{proposition}\label{sharpness proposition} For all odd dimensions $n \geq 3$ there exists some operator $\mathcal{F} \in I^{-(n-1)/2-1/4}(\R^{n+1}, \R^n; \mathcal{C})$ with $\mathcal{C}$ satisfying the cinematic curvature condition such that 
\begin{equation*}
\bigl\| \, (I-\Delta_x)^{\gamma/2} \circ {\mathcal F} f\, \bigr\|_{ L^p(\R^{n+1})} \lesssim \| f \|_{L^p(\R^n)} \quad \textrm{for all $0<\gamma<n/p$}
\end{equation*}
fails for $p < \bar{p}_n$.
\end{proposition}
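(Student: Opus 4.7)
\emph{Plan.} The counterexample is constructed by adapting the Kakeya compression phenomena for geodesics of Minicozzi and the third author \cite{Minicozzi1997} to the FIO local smoothing framework, and then deriving the failure of the hypothesized inequality via a Knapp-type focused wave packet.

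\emph{Step 1: Construction of $\mathcal{F}$.} In odd dimensions $n\geq 3$, let $g$ be a smooth Riemannian metric on an open set $U\subset\R^n$ whose geodesic flow exhibits the Kakeya compression behaviour identified in \cite{Minicozzi1997}---a higher-dimensional analogue of antipodal focusing on the round sphere, realised locally around a pair of conjugate points $x_0,x_1\in U$ with conjugacy time $t_0>0$. Define
\begin{equation*}
\mathcal{F}f(x,t)\,:=\,\chi(x,t)\,\bigl[e^{it\sqrt{-\Delta_g}}(1-\Delta_g)^{-(n-1)/4}f\bigr](x),
\end{equation*}
where $\chi$ is a smooth compactly supported cutoff equal to one near $(x_1,t_0)$. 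Then $\mathcal{F}\in I^{-(n-1)/2-1/4}(\R^{n+1},\R^n;\mathcal{C})$ with $\mathcal{C}$ the canonical relation of the half-wave operator; the cinematic curvature condition for $\mathcal{C}$ is automatic for wave propagators on Riemannian manifolds.

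\emph{Step 2: Deriving the contradiction.} Suppose, toward contradiction, that
\begin{equation*}
\bigl\|(I-\Delta_x)^{\gamma/2}\mathcal{F}f\bigr\|_{L^p(\R^{n+1})} \lesssim \|f\|_{L^p(\R^n)}
\end{equation*}
holds for some $p<\bar{p}_n$ and every $0<\gamma<n/p$. After a Littlewood--Paley decomposition and parabolic rescaling, this would promote the fixed-time Seeger--Sogge--Stein estimate for $e^{it\sqrt{-\Delta_g}}(1-\Delta_g)^{-(n-1)/4}$ into a strict improvement of Bochner--Riesz type via the standard ``local smoothing $\Rightarrow$ Bochner--Riesz'' implication (cf.\ \cite{Sogge1991}, \cite{Tao1999}). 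The Kakeya compression example of \cite{Minicozzi1997} furnishes an explicit Knapp-type initial datum $f_\lambda$, spectrally localised at dyadic scale $\lambda$ and concentrated on a narrow tube of geodesics emanating from $x_0$ towards $x_1$, for which $\mathcal{F}f_\lambda$ focuses at $(x_1,t_0)$ with amplitude saturating the trivial pointwise bound; a direct lower bound on the $L^p(\R^{n+1})$ norm of this focused wave then contradicts the hypothesised estimate for every $\gamma$ sufficiently close to $n/p$.

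\emph{Main obstacle.} The principal difficulty is executing the last step quantitatively: controlling the space-time $L^p$-norm of the focused wave from below (rather than at the focal time only), and verifying that the resulting failure exponent is strictly positive throughout the range $p<\bar{p}_n$---not merely at the endpoint $p=\bar{p}_n$ itself. A secondary technical point is verifying the cinematic curvature condition for the constructed $\mathcal{C}$ and ensuring that the microlocal cutoff $\chi$ does not excise the portion of $\mathcal{C}$ carrying the compressed geodesic family.
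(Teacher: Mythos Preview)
Your proposal has a genuine gap: the Minicozzi--Sogge construction cannot deliver the range $p<\bar{p}_n$ required by the proposition. The operators you build arise from a Riemannian wave propagator, so the associated cones $\Gamma_{z_0}$ have $n-1$ \emph{positive} principal curvatures. For such convex-cone FIOs the Minicozzi--Sogge compression examples only force failure of the full local smoothing estimate in the strictly smaller range
\[
p \;<\; \bar{p}_{n,+} \;=\; \frac{2(3n+1)}{3n-3} \;<\; \frac{2(n+1)}{n-1} \;=\; \bar{p}_n
\]
(for instance, when $n=3$ one has $\bar{p}_{3,+}=10/3$ versus $\bar{p}_3=4$). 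Indeed the paper explicitly remarks that the sharp examples for Theorem~\ref{FIO local smoothing theorem} do \emph{not} arise as solutions of wave equations, and \S\ref{section: sharp examples} discusses precisely this dichotomy between the convex and mixed-signature cases.

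The paper's proof instead uses Bourgain's counterexample (in Stein's formulation): one takes an averaging FIO with kernel $\rho(t)\,a(x,y)\,\delta_0(t-\Phi(x,y))$, where $\Phi(x,y)=\langle x',y'\rangle+\tfrac12\langle \bm{A}(x_n)y',y'\rangle+x_n+y_n$ and $\bm{A}(s)$ is a block-diagonal sum of the rank-one $2\times 2$ matrices $\bigl(\begin{smallmatrix}1&s\\ s&s^2\end{smallmatrix}\bigr)$. The associated surfaces $\Sigma_{x_0}$ are hyperbolic paraboloids, so the cones have $(n-1)/2$ positive and $(n-1)/2$ negative curvatures. One then invokes Proposition~\ref{Fourier vrs oscillatory proposition} (the ``local smoothing $\Rightarrow$ oscillatory integral bound'' implication), together with the known lower bound $\lambda^{-(n-1)/4-(n-1)/2p}\lesssim\|S_\phi^\lambda\|_{L^p\to L^p}$ for Bourgain's phase, to conclude failure throughout $p<\bar{p}_n$. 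The mixed signature is essential: it is what pushes the failure threshold from $\bar{p}_{n,+}$ up to $\bar{p}_n$.
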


If $\mathcal{F} \in I^{-(n-1)/2-1/4}(\R^{n+1}, \R^n; \mathcal{C})$, then $(I-\Delta_x)^{\gamma/2} \circ {\mathcal F}  \in I^{\mu-1/4}(\R^{n+1}, \R^n; \mathcal{C})$ for $\mu = -(n-1)/2 + \gamma$ by the composition theorem for Fourier integral operators (see, for instance,  \cite[Theorem 6.2.2]{Sogge2017}). The range $0<\gamma<n/p$ corresponds to $-(n-1)/2 < \mu < -\bar{s}_p + 1/p$ and thus Proposition~\ref{sharpness proposition} demonstrates that Theorem~\ref{FIO local smoothing theorem} is sharp in odd dimensions. 

Proposition~\ref{sharpness proposition} is established by relating local smoothing estimates for Fourier integral operators to $L^p$ estimates for oscillatory integral operators with non-homogeneous phase (sometimes referred to as \emph{H\"ormander-type operators}) and then invoking well-known examples of Bourgain \cite{Bourgain1991, Bourgain1995} for the oscillatory integral problem. The details of the argument are discussed in $\S$\ref{section: sharp examples}.\footnote{It is remarked that the $\mathcal{F}$ constructed to provide sharp examples for Theorem~\ref{FIO local smoothing theorem} do not arise as solutions to wave equations of the kind discussed above. Thus, these examples \emph{do not} show sharpness in Theorem~\ref{local smoothing theorem}. Indeed, it is likely that Theorem~\ref{local smoothing theorem} should hold in the range suggested by the work of Minicozzi and the third author \cite{Minicozzi1997}, as described above (see also the discussion in $\S$\ref{section: sharp examples}).}

At this juncture some historical remarks are in order. Local smoothing estimates for the euclidean wave equation were introduced by the third author in \cite{Sogge1991} and then further investigated in \cite{Mockenhaupt1992}. These early results, however, did not involve a sharp gain in regularity (that is, a sharp range of $\sigma$, at least up to the endpoint); the first sharp local smoothing estimates were established in $\R^2$ in the seminal work of Wolff \cite{Wolff2000}. For this, Wolff \cite{Wolff2000} introduced what have since become known as \emph{decoupling inequalities} for the light cone. The results of \cite{Wolff2000} were improved and extended by a number of authors \cite{Laba2002, Garrigos2009, Garrigos2010, Bourgain2013} before the remarkable breakthrough of Bourgain--Demeter \cite{Bourgain2015} established essentially sharp decoupling estimates in all dimensions (see also \cite{BourgainSF, TV2, Heo2011, Lee2012, Lee} for alternative approaches to the local smoothing problem and \cite{Cladek} for recent work in a related direction). One of the many consequences of the theorem of Bourgain--Demeter \cite{Bourgain2015} is the analogue of Theorem~\ref{local smoothing theorem} for the wave equation in $\R^n$. 

Local smoothing estimates were studied in the broader context of Fourier integral operators in parallel to the developments described above \cite{Mockenhaupt1993, Lee2013} (see also \cite{Sogge2017}). Results in this vein typically follow from variable coefficient extensions of methods used to study wave equations on flat space. Similarly, Theorem~\ref{FIO local smoothing theorem} (and therefore Theorem~\ref{local smoothing theorem}) is a consequence of a natural variable coefficient extension of the decoupling inequality of Bourgain--Demeter \cite{Bourgain2015}. The variable coefficient decoupling theorem is the main result of this paper and concerns certain oscillatory integral operators with homogeneous phase; the setup is described in the following subsection. 




\subsection{Variable coefficient decoupling}\label{subsection: variable coefficient decoupling} Let $a = a_1 \otimes a_2 \in C^{\infty}_c(\R^{n+1} \times \R^n)$ where $a_1 \in C^{\infty}_c(\R^n)$ is supported in $B(0,1)$ and $a_2$ is supported in the domain
\begin{equation*}
\Gamma_1 := \big\{\xi \in \hat{\R}^n : 1/2 \leq \xi_n \leq 2 \textrm{ and } |\xi_j| \leq |\xi_n| \textrm{ for $1 \leq j \leq n-1$}\big\}.
\end{equation*}
Suppose that $\phi \colon \R^n \times \R \times \hat{\R}^n \to \R$ is smooth away from $\R^n \times \R \times \{0\}$ and that for all $(x,t) \in \R^n \times \R$ the function $\xi \mapsto \phi(x,t;\xi)$ is homogeneous of degree 1. Writing $\mathrm{supp}\,a \setminus 0 $ for the set $(\mathrm{supp}\,a) \setminus (\R^n \times \R \times \{0\})$, assume, in addition, that $\phi$ satisfies the following geometric conditions:
\begin{itemize}
\item[H1)]$\mathrm{rank}\, \partial_{ \xi z}^2 \phi(x,t;\xi) = n$ for all $(x,t; \xi) \in \mathrm{supp}\,a \setminus 0$. Here and below $z$ is used to denote vector in $\R^{n+1}$ comprised of the space-time variables $(x,t)$. 
\item[H2)] Defining the generalised Gauss map by $G(z; \xi) := \frac{G_0(z;\xi)}{|G_0(z;\xi)|}$ for all $(z; \xi) \in \mathrm{supp}\,a \setminus 0$ where
\begin{equation*}
G_0(z;\xi) :=  \bigwedge_{j=1}^{n} \partial_{\xi_j} \partial_z\phi(z;\xi),
\end{equation*}
one has
\begin{equation*}
\mathrm{rank}\,\partial^2_{\eta \eta} \langle \partial_z\phi(z;\eta),G(z; \xi)\rangle|_{\eta = \xi} = n-1
\end{equation*}
for all $(z; \xi) \in \mathrm{supp}\,a \setminus 0$.
\end{itemize}

Here the wedge product of $n$ vectors in $\R^{n+1}$ is associated with a vector in $\R^{n+1}$ in the usual manner. It is remarked that H1) and H2) are the natural homogeneous analogues of the Carleson--Sj\"olin \cite{Carleson1972} or H\"ormander \cite{Hormander1973} conditions for non-homogeneous phase functions.

The conditions H1) and H2) naturally arise in the study of Fourier integral operators of the type described in the previous subsection. Indeed, by standard theory (see, for instance, \cite[Proposition 6.1.4]{Sogge2017}), any operator belonging to the class $I^{\mu - 1/4}(Z,Y; \mathcal{C})$ with $\mathcal{C}$ satisfying the cinematic curvature condition can be written in local coordinates as a finite sum of operators of the form
\begin{equation}\label{explicit FIO}
\mathcal{F}f(x,t) := \int_{\hat{\R}^n} e^{i \phi(x,t; \xi)} b(x,t;\xi)(1 + |\xi|^2)^{\mu/2} \hat{f}(\xi)\,\ud \xi
\end{equation}
where  $b$ is a symbol of order 0 (with compact support in the $(x,t)$ variables) and  $\phi$ satisfies the properties H1) and H2) (at least on the support of $b$).

Rather than  directly studying the operators $\mathcal{F}$ as in \eqref{explicit FIO}, a decoupling inequality shall instead be formulated in terms of a certain closely related class of oscillatory integral operators. 

Given $\lambda \geq 1$, define the rescaled phase and amplitude 
\begin{equation*}
\phi^{\lambda}(x,t;\omega) := \lambda\phi(x/\lambda, t/\lambda;\omega) \qquad \textrm{and} \qquad a^{\lambda}(x,t; \xi) := a_1(x/\lambda, t/\lambda) a_2(\xi)
\end{equation*}
and, with this data, let
\begin{equation*}
 T^{\lambda}f(x,t) := \int_{\hat{\R}^{n}} e^{i \phi^{\lambda}(x, t;\xi)} a^{\lambda}(x,t;\xi) f(\xi) \,\ud \xi.
\end{equation*}
The aforementioned variable coefficient decoupling inequality compares the $L^p$-norm of $T^{\lambda}f$ with the $L^p$-norms of localised pieces $T^{\lambda}f_{\theta}$ which form a decomposition of the original operator. To describe this decomposition fix a second spatial parameter $1 \leq R \leq \lambda$ and note that the support of $a_2$ intersects the affine hyperplane $\xi_n = 1$ on the disc $B^{n-1}(0,1) \times \{1\}$. Fix a maximally $R^{-1/2}$-separated subset of $B^{n-1}(0,1) \times \{ 1\}$ and for each $\omega$ belonging to this subset define the $R^{-1/2}$-\emph{plate}
\begin{equation*}
\theta := \big\{(\xi',\xi_n) \in \hat{\R}^n : 1/2 \leq \xi_n \leq 2 \textrm{ and } |\xi'/\xi_n - \omega| \leq R^{-1/2} \big\}. 
\end{equation*}
In this case $(\omega,1) \in B^{n-1}(0,1) \times \{1\}$ is referred to as the \emph{centre} of the $R^{-1/2}$-plate $\theta$. Thus, the collection of all $R^{-1/2}$-plates forms a partition of the support of $a_2$ into finitely-overlapping subsets (see Figure~\ref{plate diagram}). For each $\theta$, let $\widetilde{\theta}$ be a subset of $\theta$ such that the family of all $\widetilde{\theta}$ forms a partition of the support of $a_2$. Given any function $f \in L^1_{\mathrm{loc}}(\hat{\R}^n)$ and an $R^{-1/2}$-plate $\theta$, define $f_{\theta} := \chi_{\widetilde{\theta}}f$, and for $1 \leq p < \infty$ and any measurable set $E \subseteq \R^{n+1}$ introduce the \emph{decoupled norm}
\begin{equation*}
\|T^{\lambda}f\|_{L^{p,R}_{\mathrm{dec}}(E)} := \big(\sum_{\theta : R^{-1/2}-\mathrm{plate}} \|T^{\lambda}f_{\theta}\|_{L^{p}(E)}^p \Big)^{1/p}.
\end{equation*}
This definition is extended to the case $p = \infty$ and to weighted norms  $\|T^{\lambda}f\|_{L^{p,R}_{\mathrm{dec}}(w)}$ in the obvious manner.
\begin{figure}
\begin{center}
\resizebox {0.95\textwidth} {!} {\begin{tikzpicture}

 \fill[fill=blue!10] (2.5,2.5) -- (10,10) -- (-10, 10) -- (-2.5, 2.5) -- cycle;			
		
 \fill[fill=yellow!30] (0.5,2.5) -- (2,10) -- (4, 10) -- (1, 2.5) -- cycle;		   
    
		\draw[black, very thick] 
		(-12.5,0) -- (12.5,0);
		\draw[black, very thick] 
		(0,-2) -- (0,13.2);
		
		\draw[black, very thick] 
    (2.5,2.5) -- (10,10);
		\draw[black, very thick] 
		(-2.5,2.5) -- (-10,10);

		\draw[black, very thick] 
		(-10,10) -- (10,10);
		\draw[black, thick, dashed] 
		(-12.5,10) -- (-10,10);
		\draw[black, thick, dashed] 
		(10,10) -- (12.5,10);
		
		\draw[black, very thick] 
		(-2.5,2.5) -- (2.5,2.5);
    \draw[black, thick, dashed] 
		(-12.5,2.5) -- (-2.5,2.5);
		\draw[black, thick, dashed] 
		(2.5,2.5) -- (12.5,2.5);

\foreach \a in {-10,-8,...,10}
    {
		\draw[black, thick, dashed] 
		({\a * 0.25},2.5) -- (0,0);
		\draw[black, thick, dashed] 
		({\a},{10}) -- ({\a * 1.25},{12.5});
		\draw[black, thick] 
		({\a},{10}) -- ({\a * 0.25},2.5);
		
		}
		
\node[left, scale=2] at  (-8, 7) {\Huge$\Gamma_1$};
\node[left, scale=1.5] at  (2.6, 7) {\Huge$\theta$};
\node[below, scale=1.5] at  (11.8, 0) {\Huge$\xi'$};
\node[left, scale=1.5] at  (0, 12.5) {\Huge$\xi_n$};
\node[below, scale=1.5] at  (-11, 2.5) {\Huge$\xi_n = 1/2$};
\node[below, scale=1.5] at  (-11.8, 10) {\Huge$\xi_n=2$};

\filldraw[black](1.5,5) circle (3pt);
\draw[black, thick, dashed] 
		(-5,5) -- (5,5);
\node[right, scale=1.5] at  (8, 4.7) {\Huge$(\omega,1)$};		
\path[red,line width=1.3pt,-{>[scale=1.5, length=10, width=10]}] (8,4.7) edge [bend left] (1.6,4.8);		
		\end{tikzpicture}}
\caption{The decomposition of the domain $\Gamma_1$ into $R^{-1/2}$-plates. The centre $(\omega,1)$ of one such plate $\theta$ is indicated.}
\label{plate diagram}
\end{center}
\end{figure}

Finally, let $\bar{p}_n$ and $\bar{s}_p$ be as in the statement of Theorem~\ref{local smoothing theorem} and given $2 \leq p \leq \infty$ define the exponent 
\begin{equation}\label{alpha exponent}
\alpha(p) := \left\{ \begin{array}{ll}
\bar{s}_p/2 & \textrm{if $2 \leq p \leq \bar{p}_n$}, \\
\bar{s}_p - 1/p & \textrm{if $\bar{p}_n \leq p \leq \infty$.}
\end{array}\right. 
\end{equation} 
With these definitions, the decoupling theorem reads as follows.

\begin{theorem}\label{decoupling theorem} Let $T^{\lambda}$ be an operator of the form described above and $2 \leq p \leq \infty$. For all $\varepsilon > 0$ and $M \in \N$ one has\footnote{Strictly speaking, the proof will establish this inequality with the operator appearing on the right-hand side of \eqref{decoupling theorem equation} defined with respect to an amplitude with slightly larger spatial support than that appearing in the operator on the left (but both operators are defined with respect to the same phase function). This has no bearing on the applications and such slight discrepancies will be suppressed in the notation.}
\begin{equation}\label{decoupling theorem equation}
\|T^{\lambda}f\|_{L^p(\R^{n+1})} \lesssim_{\varepsilon, M, \phi, a} \lambda^{\alpha(p) +\varepsilon} \|T^{\lambda}f\|_{L^{p,\lambda}_{\mathrm{dec}}(\R^{n+1})}  + \lambda^{-M}\|f\|_{L^2(\hat{\R}^n)}.
\end{equation}
\end{theorem}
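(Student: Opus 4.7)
The plan is to adapt the induction-on-scales strategy of Bourgain--Demeter \cite{Bourgain2015} by approximating the operator on intermediate spatial balls by constant-coefficient conic extension operators, to which the sharp cone decoupling theorem of \cite{Bourgain2015} applies directly.

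Fix an intermediate scale $K$ with $1 \ll K \ll \lambda^{1/4}$ and decompose the $\lambda$-ball in physical space into balls $B(\bar z, K^2)$. On each such ball, Taylor expand $\phi^\lambda(z;\xi) = \lambda\phi(z/\lambda;\xi)$ in $z$ around $\bar z$ to first order; the remainder is of order $\lambda^{-1}K^4 = o(1)$ and may be absorbed into the amplitude. The leading frozen phase is affine in $z$ with $z$-gradient equal to $\partial_z\phi(\bar z/\lambda;\xi)$; by H1) this gradient map is a local diffeomorphism of frequency space, and by H2) its image as $\xi$ runs over $\supp a_2$ is a conic hypersurface with $n-1$ non-vanishing principal curvatures---a piece of a light-cone-type hypersurface. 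Thus, modulo rapidly decaying tails, $T^\lambda f$ on $B(\bar z,K^2)$ is a constant-coefficient cone extension at frequency scale $K$, and the Bourgain--Demeter cone decoupling theorem yields a decoupling into $K^{-1}$-plates $\tau$ with loss $K^{2\alpha(p)+\varepsilon/2}$.

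Within each such $K^{-1}$-plate $\tau$, one performs a parabolic rescaling in the frequency variable centered at the plate's centre, combined with the corresponding linear change of variables in $(x,t)$ mapping the $K^2$-ball onto a $\lambda/K^2$-ball. The localised operator $T^\lambda f_\tau$ is transformed, modulo negligible errors, into an operator of the same form as $T^\lambda$ but at the reduced parameter $\lambda/K^2$, with a new homogeneous phase still satisfying H1) and H2). The induction hypothesis applied at scale $\lambda/K^2$ decouples the rescaled operator into $(\lambda/K^2)^{-1/2}$-subplates, which after reversing the rescaling correspond exactly to $\lambda^{-1/2}$-subplates of $\tau$, with an additional loss of $(\lambda/K^2)^{\alpha(p)+\varepsilon}$. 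Multiplying the two losses yields $K^{-\varepsilon/2}\lambda^{\alpha(p)+\varepsilon}$, so by taking $K$ large enough (depending on the Bourgain--Demeter constant but independently of $\lambda$) the induction closes, with all Taylor-remainder and wave-packet tail errors absorbed into $\lambda^{-M}\|f\|_{L^2}$.

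The principal obstacle is the rigorous approximation step: one must verify that Bourgain--Demeter's sharp cone decoupling applies \emph{uniformly} to the family of frozen conic hypersurfaces $\xi\mapsto\partial_z\phi(\bar z/\lambda;\xi)$ parametrised by $\bar z$, with constants depending only on a finite number of derivatives of $\phi$, so that a single constant $C_\varepsilon$ suffices throughout. A secondary technical difficulty is tracking the amplitude through the parabolic rescaling --- the rescaled amplitude has slightly enlarged spatial support, which is the source of the discrepancy noted in the theorem's footnote --- and ensuring that the $o(1)$ phase errors from the first-order Taylor truncation can be absorbed into bounded multiplicative perturbations of the amplitude without cumulative degradation across the iterations.
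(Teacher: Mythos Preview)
Your proposal is correct and follows essentially the same route as the paper: localise to small physical balls, approximate by the constant-coefficient cone extension operator $E_{\bar z}$ obtained by freezing $z$ at the ball centre, apply Bourgain--Demeter to gain a $K^{\varepsilon/2}$, then use parabolic rescaling plus the induction hypothesis on each resulting plate; the two obstacles you flag (uniformity of the Bourgain--Demeter constant over the family $\{E_{\bar z}\}$, and control of the rescaled amplitude/phase so the induction closes over a fixed class of data) are precisely what the paper spends most of its effort on, handling them via the ``type $\mathbf{A}$'' bookkeeping and the two-sided approximation Lemma~2.4. The only cosmetic difference is that the paper parametrises the intermediate scale by $K$-balls and $K^{-1/2}$-plates (rather than your $K^2$-balls and $K^{-1}$-plates) and runs the induction on a two-parameter quantity $\mathfrak{D}^{\varepsilon}_{\mathbf 1}(\lambda;R)$ with $R \le \lambda^{1-\varepsilon/n}$, which makes the error management slightly cleaner.
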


Theorem~\ref{decoupling theorem} is a natural variable coefficient extension of (the $\ell^p$ variant of) Theorem 1.2 in \cite{Bourgain2015}, which treats the prototypical case $\phi(x,t;\xi) = \langle x, \xi \rangle + t |\xi|$. More generally, the \emph{translation-invariant} case, where $\phi$ is linear in the variables $x,t$, can be deduced from the results of \cite{Bourgain2015, Bourgain2017a} via an argument originating in \cite{Pramanik2007, Garrigos2010}. Interestingly, it transpires that the result for general operators $T^{\lambda}$ follows itself from the translation-invariant case. This stands in contrast with the $L^p$-theory of such operators (see, for instance, \cite{Bourgain2011, Guth}). 

Finally, it is remarked that the argument used to prove Theorem~\ref{decoupling theorem} is flexible in nature, and could equally be applied to prove natural variable coefficient extensions of other known decoupling results, such as the $\ell^2$ decoupling theorem for the paraboloid \cite{Bourgain2015} or the decoupling theorem of Bourgain--Demeter--Guth \cite{Bourgain2016} for the moment curve (in the latter case the relevant variable coefficient operators are those appearing in \cite{Bak2004, Bak2009}).

\begin{acknowledgement} The first and second authors are indebted to Andreas Seeger for helpful conversations and his hospitality during their visit to UW-Madison in October, 2017. They would also like to thank Jon Bennett, Stefan Buschenhenke and Ciprian Demeter for useful and encouraging discussions on decoupling inequalities. 
\end{acknowledgement}




\section{A proof of the variable coefficient decoupling inequality}

\subsection{An overview of the proof} As indicated in the introduction, Theorem~\ref{decoupling theorem} will be derived as a consequence of the (known) translation-invariant case; the latter result is recalled presently. Let $a_2$ be as in the introduction and suppose $h \colon \hat{\R}^n \to \R$ is smooth away from 0, homogeneous of degree 1 and satisfies $\mathrm{rank}\,\partial_{\xi\xi}^2h(\xi) = n-1$ for all $\xi \in \mathrm{supp}\,a_2\setminus\{0\}$. With this data, define the extension operator
\begin{equation*}
Ef(x,t) := \int_{\hat{\R}^n} e^{i(\langle x, \xi \rangle + t h(\xi))}a_2(\xi)f(\xi)\,\ud \xi. 
\end{equation*} 
For the exponent $\alpha$ defined in \eqref{alpha exponent}, the translation-invariant case of the theorem (due to Bourgain--Demeter \cite{Bourgain2015, Bourgain2017a}) reads thus.

\begin{theorem}[Bourgain--Demeter \cite{Bourgain2015, Bourgain2017a}]\label{translation-invariant decoupling theorem} For all $2 \leq p \leq \infty$ and all $\varepsilon > 0$ the estimate
\begin{equation}\label{translation-invariant decoupling inequality}
\|Ef\|_{L^p(w_{B_{\lambda}})} \lesssim_{\varepsilon, N, h, a} \lambda^{\alpha(p) +\varepsilon} \|Ef\|_{L^{p,\lambda}_{\mathrm{dec}}(w_{B_{\lambda}})}
\end{equation}
holds for $\lambda \geq 1$. 
\end{theorem}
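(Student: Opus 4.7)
The plan is to split the argument according to the two regimes in the definition of $\alpha(p)$, since the mechanism producing the exponent differs between the subcritical range $2 \leq p \leq \bar{p}_n$ and the supercritical range $\bar{p}_n \leq p \leq \infty$. In both regimes the statement is intrinsically about the conic surface $\{(\xi, h(\xi)) : \xi \in \supp a_2\}$, which, by the rank condition $\operatorname{rank}\,\partial_{\xi\xi}^2 h = n-1$ together with homogeneity of degree $1$, is a cone over a hypersurface of non-vanishing Gaussian curvature. All arguments will make essential use of this ``cone over a paraboloid'' structure and the fact that the weight $w_{B_{\lambda}}$ can be treated via standard Schwartz tail arguments.

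In the subcritical range $2 \leq p \leq \bar{p}_n$, I would first slice $\supp a_2$ into radial sectors of angular width $\lambda^{-1/2}$ about the $\xi_n$-axis. After parabolic rescaling, each such sector becomes a $\lambda^{1/2}$-dilate of a piece of a surface with non-degenerate second fundamental form, so the $\ell^2$ decoupling theorem for the paraboloid of Bourgain--Demeter \cite{Bourgain2015} applies at scale $\lambda^{1/2}$ and produces an $\ell^2$-decoupling at the level of $R^{-1/2}$-plates with loss $\lambda^{\bar{s}_p/2 + \varepsilon}$; embedding $\ell^2 \hookrightarrow \ell^p$ delivers the claimed inequality with $\alpha(p) = \bar{s}_p/2$. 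In the supercritical range $\bar{p}_n \leq p \leq \infty$ one invokes the decoupling theorem for the cone itself: the proof proceeds by induction on $\lambda$, using a broad/narrow Bourgain--Guth dichotomy to separate the transverse interactions from the concentrated ones. The broad contribution is controlled via multilinear Kakeya (Bennett--Carbery--Tao), applied to tubes of dimensions $1 \times R^{1/2} \times R$ adapted to the cone, while the narrow contribution is handled by parabolic rescaling followed by the induction hypothesis. The bootstrap, iterated a finite number of times depending on $\varepsilon$, produces the sharp exponent $\alpha(p) = \bar{s}_p - 1/p$ up to a $\lambda^\varepsilon$ loss.

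General homogeneous $h$ (rather than $|\xi|$) is handled by observing that each ingredient (parabolic rescaling, multilinear Kakeya, paraboloid decoupling) is stable under smooth diffeomorphisms preserving the curvature hypothesis; alternatively, one follows the equivalent variant proved in \cite{Bourgain2017a}. The main obstacle is the supercritical case: one must carry out the induction on scales in such a way that the constants in the multilinear Kakeya and in the parabolic rescaling compose without introducing a loss $\lambda^\delta$ for any fixed $\delta > 0$. This is where the bootstrap/interpolation between the trivial estimate and the multilinear one, run only a bounded number of times, proves essential, as does the $R^{\varepsilon}$-room afforded by the statement of multilinear Kakeya.
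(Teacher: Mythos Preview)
The paper does not actually prove this theorem; it is invoked as a black-box input from Bourgain--Demeter, with the remark that the general homogeneous $h$ case follows from the prototypical case $h(\xi)=|\xi|$ either via the Pramanik--Seeger/Garrig\'os--Seeger transference arguments or via a variant of the variable-coefficient machinery that the paper develops for Theorem~\ref{decoupling theorem}. So there is no ``paper's own proof'' to compare against beyond that citation, and your sketch is really a sketch of Bourgain--Demeter's argument rather than of anything the paper does.

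On the merits of the sketch itself: your supercritical outline (broad/narrow dichotomy, multilinear input, induction on scales via parabolic rescaling, bootstrap a bounded number of times) is a fair high-level description of the Bourgain--Demeter scheme. Your subcritical argument, however, is garbled. You say you first slice $\supp a_2$ into angular sectors of width $\lambda^{-1/2}$ and then apply paraboloid $\ell^2$-decoupling ``at scale $\lambda^{1/2}$'' to each one after parabolic rescaling --- but those $\lambda^{-1/2}$ sectors \emph{are} the plates in the target decoupled norm, so once that slicing is done there is nothing left to decouple, and you have not explained where the exponent $\bar{s}_p/2$ comes from. The actual reduction runs the other way: restricted to the shell $\xi_n\sim 1$, the conic surface $\{(\xi,h(\xi))\}$ is a $\lambda^{-1}$-neighbourhood of the graph of the curved cross-section $\omega\mapsto (\omega,1,h(\omega,1))$ in $\R^n$, and a \emph{single} application of $(n-1)$-dimensional paraboloid $\ell^2$-decoupling at scale $\lambda$ to that surface decouples $Ef$ into the $\lambda^{-1/2}$-plates with the sharp $\ell^2$ constant; then $\ell^2\hookrightarrow\ell^p$ gives $\alpha(p)=\bar{s}_p/2$. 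You also introduce an undefined scale $R$, and your claim that paraboloid decoupling applied at scale $\lambda^{1/2}$ yields a loss $\lambda^{\bar{s}_p/2+\varepsilon}$ is dimensionally inconsistent with that scale.
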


Here $B_{R}$ denotes a ball of radius $R$ for any $R > 0$ and $w_{B_{R}}$ is a rapidly decaying weight function, concentrated on $B_{R}$. In particular, if $(\bar{x},\bar{t}) \in \R^n \times \R$ denotes the centre of $B_{R}$, then
\begin{equation}\label{weight function}
w_{B_{R}}(x,t) := \big(1 + R^{-1}|x - \bar{x}| + R^{-1} |t - \bar{t}|\big)^{-N}, 
\end{equation}
where $N$ can be taken to be any sufficiently large integer (depending on $n$, $h$ and $p$). It is remarked that the dependence on $h$ of the implicit constant in the inequality \eqref{translation-invariant decoupling inequality} involves only the size of the absolute values of the non-zero eigenvalues of $\partial_{\xi \xi}^2 h$ and their reciprocals, as well as upper bounds for a finite number of higher order derivatives $\partial_{\xi}^{\beta}h$, $|\beta| \geq 3$. 

As mentioned in the introduction, Theorem~\ref{translation-invariant decoupling theorem} does not appear in \cite{Bourgain2015, Bourgain2017a} in the stated generality, but this result may be readily deduced from the prototypical cases considered in \cite{Bourgain2015, Bourgain2017a} via the arguments of \cite{Pramanik2007, Garrigos2010} (see also \cite[$\S\S$7-8]{Bourgain2015} and \cite{Guo}), or by using a variant of the approach developed in the present article.

The passage from Theorem~\ref{translation-invariant decoupling theorem} to Theorem~\ref{decoupling theorem} is, in essence, realised in the following manner. The desired decoupling inequalities have a `self-similar' structure, which is manifested in their almost-invariance under certain parabolic rescaling (see Lemma~\ref{parabolic rescaling lemma}). An implication of this self-similarity is that in order to prove the decoupling estimate, it suffices to obtain some non-trivial, but possibly very small, gain at a single spatial scale; this gain can then be propagated through all the scales via parabolic rescaling.\footnote{Further details and discussion of this perspective on decoupling theory can be found in the recorded lecture series given by Guth as part of the MSRI harmonic analysis programme during January 2017 \cite{Guth_MSRI_1, Guth_MSRI_2, Guth_MSRI_3}.} At spatial scales $K$ below the critical value $\lambda^{1/2}$ one can effectively approximate $T^{\lambda}$ by an extension operator $E$ of the form described above: this is the content of Lemma~\ref{approximation lemma} below. Combining this approximation with Theorem~\ref{translation-invariant decoupling theorem} provides some gain at such scales $K$, and combining these observations concludes the argument.




\subsection{Basic properties of the phase}\label{subsection: properties of the phase} Before carrying out the programme described above it is useful to note some basic properties of homogeneous phases $\phi$ satisfying the conditions H1) and H2) and to make some simple reductions. 

After a localisation and a translation argument, one may assume that $a$ is supported inside $Z \times \Xi$ where $Z := X \times T$ for $X \subseteq B(0,1) \subseteq \R^{n}$ and $T \subseteq (-1,1) \subseteq \R$  small open neighbourhoods of the origin and $\Xi \subseteq \Gamma_1$ is a small open sector around $e_n := (0, \dots, 0,1) \in \hat{\R}^n$. By choosing the size of the neighbourhoods appropriately, one may assume the phase satisfies a number of useful additional properties, described presently. 

By localising, one may ensure that strengthened versions of the conditions H1) and H2) hold. In particular, without loss of generality one may work with phases satisfying:
\begin{itemize}
\item[H1$^\prime$)] $\det \partial_{ \xi x}^2 \phi(z;\xi) \neq 0$ for all $(z; \xi) \in Z \times \Xi$;
\item[H2$^\prime$)] $\det \partial_{\xi' \xi'} \partial_t \phi (z;\xi) \neq 0$ for all $(z; \xi) \in Z \times \Xi$.
\end{itemize}

Indeed, by precomposing the phase with a rotation in the $z = (x,t)$ variables, one may assume that $G(0;e_n)=e_{n+1}$ and therefore $\partial_\xi \partial_t \phi (0; e_n)=0$. Hence, by H1), it follows that $\det \partial_{ \xi x}^2 \phi(0;e_n) \neq 0$. On the other hand, by the homogeneity of $\phi$, every $(n-1)\times(n-1)$ minor of the matrix featured in the H2) condition is a multiple of $\det \partial_{\eta'\eta'} \langle \partial_z \phi(z;\eta), G(z; \xi)\rangle|_{\eta = \xi}$. Thus, in order for the rank condition H2) to hold, this determinant must be non-zero. In particular, as $G(0;e_n)=e_{n+1}$, it follows that $\det \partial_{\xi' \xi'} \partial_t \phi (0;e_n) \neq 0$. Choosing the neighbourhoods $Z$ and $\Xi$ sufficiently small now ensures both H1$^\prime$) and H2$^\prime$) hold.

By Euler's homogeneity relations,
\begin{equation*}
\partial_x \phi (x,t;\xi) = \sum_{j=1}^n \xi_j \cdot \partial_{\xi_j}\partial_x \phi (x,t;\xi).
\end{equation*}
It follows that for each $t \in (-1,1)$ and $\xi \in \hat{\R}^n$ the Jacobian determinant of the map $x \mapsto ((\partial_{\xi'}\phi)(x,t;\xi), \phi(x,t;\xi))$ is given by $\xi_n \cdot \det \partial_{\xi x}^2\phi(x,t;\xi)$, which is non-zero by H1$^\prime$). Thus, there exists a smooth local inverse mapping $\Upsilon(\,\cdot\,,t; \xi)$ which satisfies
\begin{equation}\label{Upsilon}
(\partial_{\xi'}\phi)(\Upsilon(y,t; \xi),t;\xi) = y' \quad \textrm{and} \quad \phi(\Upsilon(y,t; \xi),t;\xi) = y_n. 
\end{equation}
Similarly, there exists a smooth mapping  $\Psi(x,t; \,\cdot\,)$ such that
\begin{equation}\label{Psi}
(\partial_{x}\phi)(x,t;\Psi(x,t; \eta)) = \eta.
\end{equation}
Given $\lambda \geq 1$, let $\Upsilon^{\lambda}$ and $\Psi^{\lambda}$ denote the natural rescaled versions of these maps, so that $\Upsilon^{\lambda}(z;\xi) = \lambda \Upsilon(y/\lambda;\xi)$ and $\Psi^{\lambda}(z; \eta) := \Psi(z/\lambda; \eta)$. One may assume that $Z$ and $\Xi$ are such that the above mappings are everywhere defined.




\subsection{Quantitative conditions} Fix $\varepsilon > 0$, $M \in \N$ and $2 \leq p < \infty$ (the $p=\infty$ case of Theorem~\ref{decoupling theorem} is trivial but nevertheless must be treated separately: see \eqref{trivial decoupling}). To facilitate certain induction arguments, it is useful to work with quantitative versions of the conditions H1$^\prime$) and H2$^\prime$) on the phase function. In particular, let $c_\mathrm{par}$ be a small fixed constant and assume that for some $0 \leq \sigma_+ \leq n-1$ and $\mathbf{A}=(A_1,A_2,A_3) \in [1,\infty)^3$ the phase satisfies, in addition to H1$^\prime$) and H2$^\prime$), the following:
\begin{itemize}
\item[H1$_{\mathbf{A}}$)] $| \partial_{ \xi x}^2\phi(z;\xi) - I_n| \leq c_{\mathrm{par}}A_1$ for all $(z;\xi) \in Z \times \Xi$. 
\item[H2$_{\mathbf{A}}$)] $|\partial^2_{\xi' \xi'}\partial_t \phi(z;\xi) - \frac{1}{\xi_n} I_{n-1, \sigma_+}| \leq c_{\mathrm{par}} A_2$ for all $(z;\xi) \in Z \times \Xi$, where 
\begin{equation*}
I_{n-1,\sigma_+}:=\mathrm{diag}(\underbrace{1,\dots, 1}_{\sigma_+},\underbrace{ -1, \dots, -1}_{n-1-\sigma_+})
\end{equation*}
 is an $(n-1) \times (n-1)$  diagonal matrix.
\end{itemize}
Some additional control on the size of various derivatives, which is of a rather technical nature, is assumed:
\begin{itemize}
\item[D1$_{\mathbf{A}})$] $\|\partial_{\xi}^{\beta} \partial_{x_k} \phi\|_{L^{\infty}(Z \times \Xi)} \leq c_{\mathrm{par}} A_1$ for all $1 \leq k \leq n$ and $\beta \in \N_0^n$ with $2 \leq |\beta| \leq 3$ satisfying $|\beta'| \geq 2$;
\newline
$\|\partial_{\xi'}^{\beta'} \partial_{t} \phi\|_{L^{\infty}(Z \times \Xi)} \leq \frac{c_{\mathrm{par}}}{2n} A_1$ for all $\beta' \in \N_0^{n-1}$ with $ |\beta'| = 3$.
\item[D2$_{\mathbf{A}})$] For some large integer $N = N_{\varepsilon, M, p} \in \N$, depending only on the dimension $n$ and the fixed choice of $\varepsilon$, $M$ and $p$, one has
\begin{equation*}
\|\partial_{\xi}^{\beta} \partial_{z}^{\alpha} \phi\|_{L^{\infty}(Z \times \Xi)} \leq \frac{c_{\mathrm{par}}}{2n} A_3
\end{equation*}
for all $(\alpha, \beta) \in \N_0^{n+1} \times \N_0^n$ with $2 \leq |\alpha| \leq 4N$ and $1 \leq |\beta| \leq 4N+2$ satisfying $1 \leq |\beta| \leq 4N$ or $|\beta'| \geq 2$. 
\end{itemize}
Finally, it is useful to assume a \emph{margin} condition on the spatial support of the amplitude $a$:
\begin{itemize}
\item[M$_{\mathbf{A}}$)]  $\mathrm{dist}(\mathrm{supp}\,a_1, \R^{n+1}\setminus Z) \geq 1/4A_3$. 
\end{itemize}

Datum $(\phi, a)$ satisfying H1$_{\mathbf{A}}$), H2$_{\mathbf{A}}$), D1$_{\mathbf{A}}$), D2$_{\mathbf{A}}$) and M$_{\mathbf{A}}$) (in addition to H1$^\prime$) and H2$^\prime$)) is said to be of \emph{type $\mathbf{A}$}. One may easily verify that any phase function satisfying H1$^\prime$) and H2$^\prime$) is of type $\mathbf{A}$ for some $\mathbf{A}=(A_1, A_2, A_3) \in [1, \infty)^3$. The conditions H1$_{\mathbf{A}}$) and H2$_{\mathbf{A}}$) are quantitative substitutes for H1$^\prime$) and H2$^\prime$) if, say, $A_1, A_2 \leq 1$; for $A_1$ and $A_2$ large, however, the conditions H1$_\mathbf{A})$ and H2$_\mathbf{A})$ are vacuous and do not imply H1$^\prime$) or H2$^\prime$). By various rescaling arguments, it is possible to reduce to the case where $\mathbf{A} = \mathbf{1} := (1,1,1)$, as shown in \S\ref{subsection: parabolic rescaling}.




\subsection{Setting up the induction for \eqref{decoupling theorem equation} and reduction to $\lambda^{1-\varepsilon/n}$-balls} Continuing with the fixed $\varepsilon$, $M$ and $p$ from the previous subsection, let $\mathbf{A}=(A_1,A_2,A_3) \in [1,\infty)^3$ and $N \in \N$ be as in the definition of the condition D2$_{\mathbf{A}}$). For $1 \leq R \leq \lambda$ let $\mathfrak{D}_{\mathbf{A}}^{\varepsilon}(\lambda; R)$ denote the infimum over all $C \geq 0$ for which the inequality
\begin{equation}\label{decoupling constant definition}
\|T^{\lambda}f\|_{L^p(B_R)} \leq CR^{\alpha(p) + \varepsilon}  \|T^{\lambda}f\|_{L^{p,R}_{\mathrm{dec}}(w_{B_R})} + R^{2n} (\lambda/R)^{-\varepsilon N/8n} \|f\|_{L^2(\hat{\R}^n)}
\end{equation}
holds for all type $\mathbf{A}$ data $(\phi, a)$\footnote{As in the statement of Theorem~\ref{decoupling theorem}, a discrepancy between the amplitude functions is allowed here: the right-hand operator is understood to be defined with respect to \emph{some} amplitude with possibly slightly larger spatial support than the original amplitude $a$.} and balls $B_R$ of radius $R$ contained in $B(0,\lambda)$. Here the weight function is understood to be defined with respect to the fixed choice of $N$ above, as in \eqref{weight function}. It is remarked that the quantity $\mathfrak{D}_{\mathbf{A}}^{\varepsilon}(\lambda; R)$ is always finite. To see this, note that for any $1 \leq \rho \leq R$ and $\rho^{-1/2}$-plate $\theta$ one may write 
\begin{equation*}
T^\lambda f_\theta = \sum_{\substack{\sigma \cap \widetilde{\theta} \neq \emptyset \\ \sigma: R^{-1/2}-\mathrm{plate}}} T^\lambda f_\sigma;
\end{equation*}
recall that $\widetilde{\theta}$ is the subset of $\theta$ upon which $f_\theta$ is supported. By the triangle and H\"older's inequalities, for any weight $w$ one has 
\begin{equation}\label{trivial decoupling 0}
 \|T^{\lambda}f\|_{L^{p, \rho}_{\mathrm{dec}}(w)} \leq (R/\rho)^{(n-1)/2p'}\|T^{\lambda}f\|_{L^{p, R}_{\mathrm{dec}}(w)}.
\end{equation}
Taking $\rho$ = 1, one thereby deduces the trivial bound 
\begin{equation}\label{trivial decoupling}
\mathfrak{D}_{\mathbf{A}}^{\varepsilon}(\lambda; R) \lesssim R^{(n-1)/2p' - \alpha(p)},
\end{equation} 
which, in particular, shows that $\mathfrak{D}_{\mathbf{A}}^{\varepsilon}(\lambda; R)$ is finite. This trivial observation also proves Theorem~\ref{decoupling theorem} in the $p = \infty$ case. 

To prove Theorem~\ref{decoupling theorem} for the fixed parameters $2\varepsilon$, $M$ and $2 \leq p < \infty$ it is claimed that it suffices to show that
\begin{equation}\label{away from lambda}
\mathfrak{D}_{\mathbf{A}}^{\varepsilon}(\lambda; \lambda^{1-\varepsilon/n}) \lesssim_{\mathbf{A},\varepsilon} 1.
\end{equation}
The `$\varepsilon/n$-gain' realised by this reduction will be useful for various technical reasons. To see the above claim, observe that the support conditions on the amplitude $a$ imply that the support of $T^\lambda f$ is always contained in $B(0,\lambda)$. Take a cover of $B(0,\lambda)$ by finitely-overlapping $\lambda^{1-\varepsilon/n}$-balls and apply \eqref{away from lambda} to the relevant $L^p$-norm defined over each of these balls. Summing over all the contributions from the collection via Minkowski's inequality, one deduces that 
\begin{equation*}
\|T^{\lambda}f\|_{L^p(B(0,\lambda))} \lesssim_{\mathbf{A}, \varepsilon} \lambda^{\alpha(p) + \varepsilon} \|T^{\lambda} f\|_{L^{p,\lambda^{1-\varepsilon/n}}_{\mathrm{dec}}(w_{B(0,\lambda)})} +\lambda^{2n-\varepsilon N/8n} \|f\|_{L^2(\hat{\R}^n)}.
\end{equation*}
Here the weight $w_{B(0,\lambda)}$ is as defined in \eqref{weight function} (with $R = \lambda$ and $\bar{x} = 0$, $\bar{t} = 0$). Provided $N$ is sufficiently large, the desired estimate \eqref{decoupling theorem equation} now follows from \eqref{trivial decoupling 0}. 

After reducing to the case $\mathbf{A} = \mathbf{1}$, it will be shown in $\S$\ref{subsec:proof}, using induction on $R$, that $\mathfrak{D}_{\mathbf{1}}^{\varepsilon}(\lambda; R) \lesssim_{\varepsilon} 1$ for all $1 \leq R \leq \lambda^{1-\varepsilon/n}$, thus establishing \eqref{away from lambda}. The trivial inequality \eqref{trivial decoupling} will serve as the base case for this induction.




\subsection{Parabolic rescaling} \label{subsection: parabolic rescaling}  The first ingredient required in the proof of Theorem~\ref{decoupling theorem} is a standard parabolic rescaling lemma. Before stating this result, it is useful to observe the following trivial consequence of rescaling.

\begin{lemma}\label{basic rescaling lemma}
Let $\mathbf{A}=(A_1,A_2,A_3)$ and $\tilde{\mathbf{A}}=(A_1,A_2,1)$. Then
\begin{equation*}
\mathfrak{D}_{\mathbf{A}}^{\varepsilon}(\lambda; R) \lesssim_{A_3} \mathfrak{D}_{\tilde{\mathbf{A}}}^{\varepsilon}(\lambda/A_3; R/A_3).
\end{equation*}
\end{lemma}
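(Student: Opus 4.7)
The lemma is a direct rescaling of the spatial variable. Define
\[
\tilde{\phi}(w;\xi) := A_3\phi(w/A_3;\xi), \qquad \tilde{a}_1(w) := a_1(w/A_3), \qquad \tilde{a}_2 := a_2,
\]
and set $\tilde{\lambda} := \lambda/A_3$. The key identity
\[
\partial_w^{\alpha}\tilde{\phi}(w;\xi) = A_3^{1-|\alpha|}\bigl(\partial_z^{\alpha}\phi\bigr)(w/A_3;\xi)
\]
preserves first-order derivatives and contracts higher-order ones by $A_3^{-1}$. Consequently H1$_{\mathbf{A}}$), H2$_{\mathbf{A}}$), D1$_{\mathbf{A}}$) pass to $\tilde{\phi}$ with the same $A_1, A_2$; the bound in D2$_{\mathbf{A}}$) improves from $\tfrac{c_{\mathrm{par}}}{2n}A_3$ to $\tfrac{c_{\mathrm{par}}}{2n}$, matching D2$_{\tilde{\mathbf{A}}}$); and the margin scales from $1/(4A_3)$ to $1/4$, verifying M$_{\tilde{\mathbf{A}}}$). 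A direct computation shows $\tilde{\phi}^{\tilde{\lambda}} = \phi^{\lambda}$ and $\tilde{a}^{\tilde{\lambda}} = a^{\lambda}$, so $\tilde{T}^{\tilde{\lambda}}f = T^{\lambda}f$ as functions on $\R^{n+1}$.

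The one non-trivial wrinkle is that the rescaled neighborhood $\tilde{Z} := A_3 Z$ may have diameter larger than $1$, so the standard convention that type $\tilde{\mathbf{A}}$ data is supported in a unit-size neighborhood is violated, and $\tilde{T}^{\tilde{\lambda}}f$ is supported in $B(0,\lambda) = B(0,A_3\tilde{\lambda})$ instead of $B(0,\tilde{\lambda})$. To fit the definition of $\mathfrak{D}_{\tilde{\mathbf{A}}}^{\varepsilon}$, partition $\tilde{a}_1$ via a smooth partition of unity into $\lesssim A_3^{n+1}$ pieces supported in balls of radius $\leq 1/4$, and translate each piece to be centered at the origin. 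Since the type conditions hold pointwise throughout $\tilde{Z}$, they are inherited by each translated piece, which now fits the standard form and is supported in $B(0,\tilde{\lambda})$.

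For each such piece, apply the decoupling inequality defining $\mathfrak{D}_{\tilde{\mathbf{A}}}^{\varepsilon}(\tilde{\lambda};\tilde{R})$ with $\tilde{R} := R/A_3$, after covering the (translated) $B_R$ by $\lesssim A_3^{n+1}$ balls of radius $\tilde{R}$. Summing $p$-th powers using bounded overlap collapses both partitions into a single $R$-scale estimate, with the weights combining via $\sum_k w_{B_{\tilde{R},k}} \lesssim w_{B_R}$. Converting the $\tilde{R}$-decoupled norm to the $R$-decoupled norm via \eqref{trivial decoupling 0} costs a factor $A_3^{(n-1)/2p'}$, while the scalings $\tilde{R}^{\alpha(p)+\varepsilon} = A_3^{-\alpha(p)-\varepsilon}R^{\alpha(p)+\varepsilon}$ and $\tilde{R}^{2n}(\tilde{\lambda}/\tilde{R})^{-\varepsilon N/8n} = A_3^{-2n}R^{2n}(\lambda/R)^{-\varepsilon N/8n}$ absorb the remaining scaling. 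All cumulative $A_3^{O(1)}$ factors are absorbed into the implicit constant of $\lesssim_{A_3}$, yielding the stated inequality. The main technical obstacle is the partition-and-translate bookkeeping needed to conform to the standard setup of $\mathfrak{D}_{\tilde{\mathbf{A}}}^{\varepsilon}$; the essential content of the lemma is the displayed identity.
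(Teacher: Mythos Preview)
Your argument is correct and follows essentially the same approach as the paper's proof: the same rescaling $\tilde{\phi}(z;\xi)=A_3\phi(z/A_3;\xi)$, the same verification that the type conditions improve to $\tilde{\mathbf{A}}$, the same partition-and-translate fix for the enlarged support, covering $B_R$ by $R/A_3$-balls, and the same appeal to \eqref{trivial decoupling 0} to pass between scales. You supply more explicit bookkeeping than the paper does, but the substance is identical.
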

\begin{proof} Let $(\phi, a)$ be a type $\mathbf{A}$ datum. Observe that $T^{\lambda}f = \tilde{T}^{\lambda/A_3}f$ where $\tilde{T}$ is defined with respect to the phase $\tilde{\phi}(z; \xi) := A_3\phi(z/A_3; \xi)$ and amplitude $\tilde{a}(z;\xi) := a(z/A_3; \xi)$. Clearly the datum $(\tilde{\phi}, \tilde{a})$ satisfies H1$_{\tilde{\mathbf{A}}}$), H2$_{\tilde{\mathbf{A}}}$), D1$_{\tilde{\mathbf{A}}})$ and D2$_{\tilde{\mathbf{A}}})$. The margin of the new amplitude $\tilde{a}$ (with respect to the rescaled open set $A_3Z$) has been increased to size 1/4 and so M$_{\tilde{\mathbf{A}}}$) holds. There is a slight issue here in that the support of the rescaled amplitude may now lie outside the unit ball, but one may decompose the amplitude via a partition of unity and translate each piece to write the operator as a sum of $O(A_3^{n+1})$ operators each associated to type $\tilde{\mathbf{A}}$ data. Finally, covering $B(0,R)$ with a union of $R/A_3$-balls and applying the definition of $\mathfrak{D}_{\tilde{\mathbf{A}}}^{\varepsilon}(\lambda/A_3; R/A_3)$ to each of the contributions arising from these balls, the result then follows from the trivial decoupling inequality \eqref{trivial decoupling 0}.
\end{proof}

\begin{lemma}[Parabolic rescaling]\label{parabolic rescaling lemma} Let $1 \leq \rho \leq R \leq \lambda$ and suppose that $T^{\lambda}$ is defined with respect to a type $\mathbf{A}= (A_1,A_2,A_3)$ datum. If $g$ is supported on a $\rho^{-1}$-plate and $\rho$ is sufficiently large depending on $\phi$, then there exists a constant $\bar{C} = \bar{C}_{\phi} \geq 1$ such that
\begin{align}
\label{parabolic rescaling inequality}
\|T^{\lambda}g\|_{L^p(w_{B_{R}})} \lesssim_{\varepsilon, \phi, N} & \;\mathfrak{D}_{\mathbf{1}}^{\varepsilon}(\lambda/\bar{C}\rho^2; R/\bar{C}\rho^2)  (R/\rho^2)^{\alpha(p) + \varepsilon}\|T^{\lambda}g\|_{L^{p,R}_{\mathrm{dec}}(w_{B_R})} \\
\nonumber
& \qquad + R^{2n}(\lambda/R)^{-\varepsilon N/8n}\|g\|_{L^2(\hat{\R}^n)}.
\end{align}
\end{lemma}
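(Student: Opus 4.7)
The plan is to execute a parabolic rescaling that converts $T^{\lambda}g$, with $g$ supported on the $\rho^{-1}$-plate $\tau$ centred at $(\omega_{0},1) \in B^{n-1}(0,1)\times\{1\}$, into an operator of type $\mathbf{1}$ at the reduced parameters $\lambda/\bar C\rho^{2}$, $R/\bar C\rho^{2}$, and then to apply the definition of $\mathfrak{D}^{\varepsilon}_{\mathbf{1}}$ to the rescaled operator. The first ingredient is the frequency change of variables
\[
\eta_{n} := \xi_{n},\qquad \eta' := \rho(\xi'/\xi_{n} - \omega_{0}),
\]
so that $\Phi(\eta) := (\eta_{n}(\omega_{0}+\eta'/\rho),\,\eta_{n})$ sends the unit sector $B^{n-1}(0,1)\times[1/2,2]$ onto a set containing $\tau$, with Jacobian of order $\rho^{-(n-1)}$.

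The key step is to construct an invertible affine map $L = L_{\omega_{0}}\colon \R^{n+1}\to \R^{n+1}$ and a real-valued phase correction $\psi_{\omega_{0}}$ so that, for a suitable constant $\bar C = \bar C_{\phi}\geq 1$, the rescaled phase
\[
\tilde\phi(\tilde z;\eta) := \bar C^{-1}\rho^{-2}\,\phi\bigl(\bar C\rho^{2} L(\tilde z);\,\Phi(\eta)\bigr) - \psi_{\omega_{0}}(\tilde z,\eta)
\]
is of type $\mathbf{1}$ on the appropriately rescaled domain. The construction is dictated by the Taylor expansion of $\phi(z;\Phi(\eta))$ in $\eta'$ about $\eta'=0$: the linear-in-$\eta'$ term is removed by translating the transverse $x'$-variables according to a controlled linear function of $z$ (using H1$^{\prime}$) to invert $\partial_{\xi x}^{2}\phi$); the quadratic-in-$\eta'$ term carries the factor $\rho^{-2}$ and has Hessian $\partial_{\xi'\xi'}^{2}\partial_{t}\phi$, which by H2$_{\mathbf{A}}$) is a controlled perturbation of the normal form $I_{n-1,\sigma_{+}}/\xi_{n}$ and is restored to this normal form by rescaling $t$ by a factor of $\rho^{2}$; and the cubic and higher error terms are absorbed using D1$_{\mathbf{A}}$) and D2$_{\mathbf{A}}$). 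The constant $\bar C$ is chosen large enough to guarantee both M$_{\mathbf{1}}$) for the rescaled amplitude and that the perturbations from the change of variables fall below $c_{\mathrm{par}}$. This produces a factorisation
\[
T^{\lambda}g(z) \;=\; \rho^{-(n-1)}\,e^{i\psi_{\omega_{0}}(L^{-1}(z),\,\cdot\,)}\;\tilde T^{\lambda/\bar C\rho^{2}}\tilde g\bigl(L^{-1}(z)\bigr),
\]
modulo lower-order terms absorbed into the $L^{2}$ error, where $\tilde g(\eta) := \eta_{n}^{n-1}g(\Phi(\eta))$ and $\tilde T^{\lambda/\bar C\rho^{2}}$ is the oscillatory operator associated to the type $\mathbf{1}$ datum $(\tilde\phi,\tilde a)$.

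Next, I would cover $L^{-1}(B_{R})$ by $O(1)$ balls of radius $R/\bar C\rho^{2}$ and invoke $\mathfrak{D}^{\varepsilon}_{\mathbf{1}}(\lambda/\bar C\rho^{2};R/\bar C\rho^{2})$ on each, producing a bound with the decoupled norm at scale $(R/\bar C\rho^{2})^{-1/2}$ in the $\eta$-variables. Under $\Phi$, an $(R/\bar C\rho^{2})^{-1/2}$-plate in $\eta$ is mapped into a subset of $\tau$ of transverse width $\rho^{-1}(R/\rho^{2})^{-1/2} = O(R^{-1/2})$, hence is covered by $O(1)$ of the $R^{-1/2}$-plates partitioning $\Gamma_{1}$; this identifies the rescaled decoupled norm with the original $L^{p,R}_{\mathrm{dec}}(w_{B_{R}})$-norm up to constants. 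The ratio $\lambda/R$ is preserved by the rescaling so the $L^{2}$ error scales correctly, and the polynomial weight $w_{B_{R}}$ pulls back to a comparable weight at the cost of a harmless increase in $N$; accounting for the Jacobian factor in the $L^{2}$ norm of $\tilde g$ then yields \eqref{parabolic rescaling inequality}.

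The main obstacle is the quantitative verification that $\tilde\phi$ is genuinely of type $\mathbf{1}$: that $\partial_{\xi'\xi'}^{2}\partial_{t}\tilde\phi$ is a $c_{\mathrm{par}}$-perturbation of $I_{n-1,\sigma_{+}}/\eta_{n}$ and that the analogous bounds of D1$_{\mathbf{1}}$) and D2$_{\mathbf{1}}$) hold with constants $\leq c_{\mathrm{par}}$ rather than $c_{\mathrm{par}}A_{i}$. The decisive observation here is that the special normal form $I_{n-1,\sigma_{+}}$ appearing in H2$_{\mathbf{A}}$) is precisely what parabolic scaling preserves, so the factor $\rho^{-2}$ arising in the Taylor expansion is cancelled exactly by the $\rho^{2}$-rescaling of $t$, while the factors $A_{i}$ from the type $\mathbf{A}$ conditions are absorbed into the constant $\bar C_{\phi}$.
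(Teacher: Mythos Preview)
Your overall architecture matches the paper's, but there are two concrete gaps that would prevent the argument from going through as written.

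First, the spatial change of variables cannot be taken to be affine. The linear-in-$\eta'$ part of the Taylor expansion of $\phi(z;\Phi(\eta))$ is $\rho^{-1}\langle\partial_{\xi'}\phi(z;\omega_{0},1),\eta'\rangle$, and for a variable coefficient phase the map $z\mapsto \partial_{\xi'}\phi(z;\omega_{0},1)$ is not affine. To remove this term (and simultaneously normalise the zeroth-order term $\phi(z;\omega_{0},1)\eta_{n}$) the paper precomposes with the genuinely nonlinear diffeomorphism $\Upsilon_{\omega_{0}}$ defined implicitly by $\partial_{\xi'}\phi(\Upsilon_{\omega_{0}}(y,t);\omega_{0},1)=y'$ and $\phi(\Upsilon_{\omega_{0}}(y,t);\omega_{0},1)=y_{n}$, followed by the anisotropic dilation $D_{\rho}(y',y_{n},t)=(\rho y',y_{n},\rho^{2}t)$. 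Your ``controlled linear function of $z$'' would only suffice in the translation-invariant case.

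Second, and more seriously, the covering of the rescaled spatial region by balls of radius $R/\bar C\rho^{2}$ does \emph{not} use $O(1)$ balls. After the anisotropic dilation $D_{\rho}^{-1}$, the image of $B_{R}$ is an ellipsoid of dimensions roughly $R/\rho$ in the $y'$-directions, $R$ in the $y_{n}$-direction and $R/\rho^{2}$ in $t$; covering this by $R/\rho^{2}$-balls therefore requires $O(\rho^{n+1})$ balls. This is precisely why the definition of $\mathfrak{D}^{\varepsilon}_{\mathbf{A}}(\lambda;R)$ carries the factor $R^{2n}$ in front of the $L^{2}$ error: each of the $O(\rho^{n+1})$ balls contributes an error $(R/\rho^{2})^{2n}(\lambda/R)^{-\varepsilon N/8n}\|g\|_{L^{2}}$, and the factor $\rho^{-4n}$ is needed to absorb the sum. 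With an $O(1)$ count the $R^{2n}$ would be unmotivated and the error bookkeeping would not close.

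A smaller omission: the rescaled phase $\tilde\phi$ lands with $\partial^{2}_{\eta'\eta'}\partial_{t}\tilde\phi(0,0;e_{n})$ equal to the \emph{original} Hessian $\partial^{2}_{\xi'\xi'}\partial_{t}\phi$ at a specific point, which is only $c_{\mathrm{par}}A_{2}$-close to $I_{n-1,\sigma_{+}}$, not equal to it. The paper therefore applies a further linear change $\mathrm{L}$ (rotation plus scaling by the square roots of the eigenvalues) in both $y$ and $\eta$ to force $\partial^{2}_{\eta'\eta'}\partial_{t}\tilde\phi_{\mathrm{L}}(0,0;e_{n})=I_{n-1,\sigma_{+}}$ exactly, yielding a type $(1,1,\bar C)$ datum; Lemma~\ref{basic rescaling lemma} then reduces this to type $\mathbf{1}$. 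Your claim that the $A_{i}$ are simply ``absorbed into $\bar C_{\phi}$'' glosses over this step: the mechanism that kills $A_{1},A_{2}$ is the factor $\rho^{-1}$ that accompanies every $\eta'$-derivative beyond the quadratic, so that choosing $\rho$ large (depending on $\phi$, hence on $\mathbf{A}$) forces H1$_{\mathbf 1}$), D1$_{\mathbf 1}$), D2$_{\mathbf 1}$); the H2$_{\mathbf 1}$) condition is then recovered from D1$_{\mathbf 1}$), D2$_{\mathbf 1}$) via a Taylor expansion about the origin where the Hessian has been normalised.
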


\begin{remark}\label{parabolic rescaling remark} The proof of the lemma will show, more precisely, that the lower bound for $\rho$ and the implicit constant in \eqref{parabolic rescaling inequality} may be chosen so as to depend only on $\varepsilon$, $\mathbf{A}$ and the following quantities:
\begin{itemize}
\item $\displaystyle \inf_{(x, t; \xi) \in \mathrm{supp}\,a} |\det \partial_{x \xi}^2 \phi (x,t; \xi)|$. 
\item The infimum and supremum of the magnitudes of the eigenvalues of
\begin{equation}\label{matrix eigenvalues}
\partial_{\xi'\xi'}^2 \partial_t \phi(x,t; \xi)
\end{equation}
over all $(x, t; \xi) \in \mathrm{supp}\,a$. 
\end{itemize}
It is remarked that the quantities appearing in the above bullet points are non-zero by the conditions H1$^\prime$) and H2$^\prime$).
\end{remark}

Lemma~\ref{parabolic rescaling lemma} will be applied in two different ways:
\begin{enumerate}[i)]
 \item An initial application of the lemma reduces the proof of Theorem~\ref{decoupling theorem} to operators defined with respect to type $\mathbf{1}$ data. This is achieved by introducing a partition of unity of the frequency domain $\Gamma_1$ into $\rho^{-1}$-plates for some sufficiently large $\rho$, depending on $\phi$. Each of these frequency-localised pieces can be rescaled via Lemma~\ref{parabolic rescaling lemma} and then summed together to yield the desired reduction. Observe that, by the preceding remark, Lemma~\ref{parabolic rescaling lemma} is uniform for type $\mathbf{1}$ data.
 \item The second application of Lemma~\ref{parabolic rescaling lemma} will be to facilitate an induction argument which constitutes the proof of Theorem~\ref{decoupling theorem} proper. The uniformity afforded by the reduction to type $\mathbf{1}$ phases is useful in order to ensure that this induction closes.
\end{enumerate}

\begin{proof}[Proof (of Lemma~\ref{parabolic rescaling lemma})] Let $\omega \in B^{n-1}(0,1)$ be such that $(\omega,1)$ is the centre of the $\rho^{-1}$-plate upon which $g$ is supported, so that 
\begin{equation*}
\mathrm{supp}\, g \subseteq \big\{ (\xi',\xi_n) \in \hat{\R}^n : 1/2 \leq \xi_n \leq 2 \textrm{ and } |\xi'/\xi_n - \omega| \leq \rho^{-1}  \big\}.
\end{equation*}
Performing the change of variables $ (\xi', \xi_n) = (\eta_n\omega + \rho^{-1}\eta', \eta_n)$, it follows that 
\begin{equation*}
T^{\lambda}g(z) = \int_{\hat{\R}^n} e^{i \phi^{\lambda}(z;\eta_n \omega +  \rho^{-1}\eta', \eta_n)} a^\lambda(z;\eta_n \omega + \rho^{-1} \eta', \eta_n) \tilde{g}(\eta)\,\ud \eta
\end{equation*}
where $\tilde{g}(\eta) := \rho^{-(n-1)}g(\eta_n \omega + \rho^{-1} \eta', \eta_n)$ and $\mathrm{supp}\,\tilde{g} \subseteq \Xi$. 

By applying a Taylor series expansion and using the homogeneity, the phase function in the above oscillatory integral may be expressed as
\begin{equation*}
\phi(z;\omega, 1)\eta_n + \rho^{-1}\langle \partial_{\xi'} \phi(z;\omega, 1), \eta' \rangle + \rho^{-2} \int_0^1 (1-r)\langle \partial_{\xi'\xi'}^2 \phi(z;\eta_n\omega + r\rho^{-1}\eta', \eta_n)\eta', \eta'  \rangle\,\ud r.
\end{equation*}
Let $\Upsilon_{\omega}(y,t) := (\Upsilon(y, t;\omega,1), t)$ and $\Upsilon_{\omega}^{\lambda}(y,t) := \lambda\Upsilon_{\omega}(y/\lambda, t/\lambda)$ and introduce the anisotropic dilations $D_{\rho}(y', y_n, t) := (\rho y', y_n, \rho^2 t)$ and $D_{\rho^{-1}}'(y', y_n) := (\rho^{-1} y', \rho^{-2}y_n)$ on $\R^{n+1}$ and $\R^n$, respectively. Recalling \eqref{Upsilon}, it follows that 
\begin{equation*}
T^{\lambda}g \circ \Upsilon_{\omega}^{\lambda}\circ D_{\rho} = \tilde{T}^{\lambda/\rho^2}\tilde{g}
\end{equation*}
where 
\begin{equation*}
\tilde{T}^{\lambda/\rho^2}\tilde{g}(y, t) := \int_{\hat{\R}^n} e^{i \tilde{\phi}^{\lambda/\rho^2}(y,t;\eta)} \tilde{a}^{\lambda}(z;\eta) \tilde{g}(\eta)\,\ud \eta
\end{equation*}
for the phase $\tilde{\phi}(y,t;\eta)$ given by
\begin{equation}\label{parabolic 1}
\langle y, \eta \rangle +  \int_0^1 (1-r)\langle \partial_{\xi'\xi'}^2 \phi(\Upsilon_{\omega}(D_{\rho^{-1}}'y,t);\eta_n\omega + r\rho^{-1}\eta', \eta_n)\eta', \eta'  \rangle\,\ud r
\end{equation}
and the amplitude $\tilde{a}(y,t; \eta):= a(\Upsilon_{\omega}(D_{\rho^{-1}}'y; t); \eta_n \omega + \rho^{-1} \eta', \eta_n)$. In particular, by a change of spatial variables, it follows that
\begin{equation*}
\|T^{\lambda}g\|_{L^p(B_R)} \lesssim_{\phi} \rho^{(n+1)/p} \|\tilde{T}^{\lambda/\rho^2}\tilde{g}\|_{L^p( (\Upsilon_{\omega}^{\lambda} \circ D_{\rho})^{-1}(B_R))}
\end{equation*}

Fix a collection $\mathcal{B}_{R/\rho^2}$ of finitely-overlapping $R/\rho^2$-balls which cover $(\Upsilon_{\omega}^{\lambda} \circ D_{\rho})^{-1}(B_R)$ and observe that
\begin{equation*}
\|T^{\lambda}g\|_{L^p(B_R)} \lesssim_{\phi} \rho^{(n+1)/p}  \big(\sum_{B_{R/\rho^2} \in \mathcal{B}_{R/\rho^2}} \|\tilde{T}^{\lambda/\rho^2}\tilde{g}\|_{L^p(B_{R/\rho^2})}^p \big)^{1/p}.
\end{equation*}
It will be shown below that
\begin{align}
\nonumber
\|\tilde{T}^{\lambda/\rho^2}\tilde{g}\|_{L^p(B_{R/\rho^2})} \lesssim_{\varepsilon, \phi}\; & \mathfrak{D}_{\mathbf{1}}^{\varepsilon}(\lambda/\bar{C}\rho^2; R/\bar{C}\rho^2)(R/\rho^2)^{\alpha(p) + \varepsilon} \|\tilde{T}^{\lambda/\rho^2}\tilde{g}\|_{L^{p,R/\rho^2}_{\mathrm{dec}}(w_{B_{R/\rho^2}})} \\
\label{parabolic 2}
& \qquad  + (R/\rho^2)^{2n}(\lambda/R)^{-\varepsilon N/8n}\|g\|_{L^2(\hat{\R}^n)}
\end{align}
holds for each  $B_{R/\rho^2} \in \mathcal{B}_{R/\rho^2}$ and a suitable constant $\bar C \geq 1$, depending on $\phi$. Momentarily assuming this (which would follow immediately from the definitions if $(\tilde{\phi}, \tilde{a})$ were a type $\mathbf{1}$ datum), the proof of Lemma~\ref{parabolic rescaling lemma} may be completed as follows.

Since $\Upsilon_{\omega}$ is a diffeomorphism, it follows that
\begin{equation*}
\bigcup_{B_{R/\rho^2} \in \mathcal{B}_{R/\rho^2}} B_{R/\rho^2} \subseteq (\Upsilon_{\omega}^{\lambda} \circ D_{\rho})^{-1}(B_{C_{\phi}R})
\end{equation*}
where $B_{C_{\phi}R}$ is the ball concentric to $B_R$ but with radius $C_{\phi}R$ for some suitable choice of constant $C_{\phi} \geq 1$ depending on $\phi$. Thus, one may sum the $p$th power of both sides of \eqref{parabolic 2} over all the balls in $\mathcal{B}_{R/\rho^2}$ and reverse the changes of variables (both in spatial and frequency) to conclude that\footnote{Here one picks up $O(\rho^{n+1})$ copies of the error term $(R/\rho^2)^{2n}(\lambda/R)^{-N/8}\|g\|_{L^2(\hat{\R}^n)}$ from \eqref{parabolic 2}: that is, one for each ball in the collection $\mathcal{B}_{R/\rho^2}$. This is compensated for by the factor $\rho^{-4n}$ appearing in each of these errors; it is for this reason that the $R^{2n}$ factor is included in the definition of $\mathfrak{D}_{\mathbf{A}}^{\varepsilon}(\lambda; R)$ in \eqref{decoupling constant definition}.}
\begin{align*}
\|T^{\lambda}g\|_{L^p(B_R)} \lesssim_{\varepsilon, \phi, N}\; & \mathfrak{D}_{\mathbf{1}}^{\varepsilon}(\lambda/\bar{C}\rho^2; R/\bar{C}\rho^2)(R/\rho^2)^{\alpha(p) + \varepsilon} \big( \!\!\!\!\!\!\!\!\!\! \sum_{\tilde{\theta} : (R/\rho^2)^{-1/2}-\mathrm{plate}} \!\!\!\!\!\!\!\!\!\! \|T^{\lambda}g_{\theta}\|_{L^p(w_{B_{R}})}^p \big)^{1/p}\\
& \qquad + R^{2n}(\lambda/R)^{-\varepsilon N/8n}\|g\|_{L^2(\hat{\R}^n)},
\end{align*}
where $\theta$ is the image of $\tilde{\theta}$ under the map $(\eta', \eta_n) \mapsto (\rho(\eta' - \eta_n\omega), \eta_n)$. In particular, if $\omega_{\tilde{\theta}}$ denotes the centre of the $(R/\rho^2)^{-1/2}$-plate $\tilde{\theta}$, then 
\begin{equation*}
\theta = \big\{ (\xi',\xi_n) \in \hat{\R}^n : 1/2 \leq \xi_n \leq 2 \textrm{ and } |\omega + \rho^{-1} \omega_{\tilde{\theta}} - \xi'/\xi_n| < R^{-1/2} \big\},
\end{equation*}
and so the $\theta$ form a cover of the support of $g$ by $R^{-1/2}$-plates. This establishes the desired inequality \eqref{parabolic rescaling inequality} with a sharp cut-off appearing in the left-hand norm, rather than the weight function $w_{B_R}$. The strengthened result, with the weight, easily follows by pointwise dominating $w_{B_R}$ by a suitable rapidly decreasing sum of characteristic functions of $R$-balls.

\begin{figure}
\begin{center}
\tdplotsetmaincoords{75}{25}
\resizebox {1.1\textwidth} {!} {\begin{tikzpicture}[tdplot_main_coords, scale=4]

	\begin{scope}


     \fill[fill=yellow!20] (-0.35,-1.5, 0) --  (-0.35, 1.5, 0) --  (2.2, 1.5, 0) --  (2.2,-1.5, 0) -- cycle;


\draw[gray, thin, dashed] 
		({1*(19/20)},{(-1/2)*(19/20)},{(1/4)*(19/20)}) --({0},{0},{0})
    ;
\draw[gray, thin, dashed] 
		({1*(19/20)},{(1/2)*(19/20)},{(1/4)*(19/20)}) --({0},{0},{0})
   ;
\draw[gray, thin, dashed] 
		({1*(3/4)},{0},{0}) --({0},{0},{0})
    ; 


   \draw[black, thick, ->] 
(-1/4,0,0) -- (1/2,0,0)  
    ;
   \draw[black, thick, <-] 
(0,-3/4,0) -- (0,3/4,0)  
    ;
   \draw[black, thick, ->] 
(0,0,-1/6) -- (0,0,1/2)  
    ;


     \fill[fill=white] 
     ({2},{1}, {1/2})
    \foreach \t in {0.95, 0.9,...,0}
    {
        -- ({2},{\t},{(\t*\t)/2})
    }
        -- ({2},{0},{0}) -- ({1}, {0}, {0})
 \foreach \t in {0,0.05,...,0.55}
    {
        --({1},{\t},{(\t*\t)})
    }
       -- cycle;
    \foreach \a in {0.1,0.2,0.3}
    {
        \draw[gray, thin] 
		({1},{\a},{\a*\a}) --({2},{2*\a},{2*(\a*\a)});
    }

     \fill[fill=white] 
     ({2},{-0.2}, {(0.2)*(0.2)/2})
    \foreach \t in {-0.15, -0.1,...,0}
    {
        -- ({2},{\t},{(\t*\t)/2})
    }
      -- ({2},{0},{0}) -- ({1}, {0}, {0})
 \foreach \t in {-0.05,-0.1}
    {
        --({1},{\t},{(\t*\t)})
    }
       -- cycle;

     \fill[fill=white] 
     ({2},{-1}, {1/2})
    \foreach \t in {-0.95, -0.9,...,0}
    {
        -- ({2},{\t},{(\t*\t)/2})
    }
      -- ({2},{0},{0}) -- ({1}, {0}, {0})
 \foreach \t in {-0.05,-0.1,...,-0.5}
    {
        --({1},{\t},{(\t*\t)})
    }
      --({1},{-0.5},{(0.5*0.5)}) -- cycle;
 
 \foreach \a in {-0.1,-0.2,...,-0.5}
 {
    \draw[gray, thin] 
	({1},{\a},{\a*\a}) --({2},{2*\a},{2*(\a*\a)});
 }


     \fill[fill=red!15] 
     ({2},{0.6}, {(0.6)*(0.6)/2})
    \foreach \t in {0.65, 0.7,...,0.8}
    {
        -- ({2},{\t},{(\t*\t)/2})
    }
       -- ({2}, {0.8}, {(0.8)*(0.8)/2}) -- ({1}, {0.4}, {(0.4)*(0.4)})
 \foreach \t in {0.39,0.38,...,0.3}
    {
        --({1},{\t},{(\t*\t)})
    }
      --({1},{0.3},{(0.3*0.3)})  -- cycle;
    \foreach \a in {0.3,0.4}
    {
        \draw[gray, thin] 
		({1},{\a},{\a*\a}) --({2},{2*\a},{2*(\a*\a)});
    }


   \draw[black, thick] 
		({1},{-0.1},{(0.1)*(0.1)})
    \foreach \t in {-0.1,-0.15,...,-0.5}
    {
        --({1},{\t},{(\t*\t)})
    };
   \draw[black, dashed] 
		({1},{-0.1},{(0.1)*(0.1)})
    \foreach \t in {0,0.1,...,0.4}
    {
        --({1},{\t},{(\t*\t)})
    };

 \draw[black, thick] 
      ({1},{0.27},{(0.27)*(0.27)})
  \foreach \t in {0.3,0.35,...,0.5}
    {
        --({1},{\t},{(\t*\t)})
    }
  -- ({1},{1/2},{1/4}) -- ({2},{1},{1/2})
  \foreach \t in {1.0,0.95,...,-1}
    {
        --({2},{\t},{(\t*\t)/2})
    }
      --({2},{-1},{1/2}) -- (1, -1/2, 1/4)
    ;


  \draw[red, thick] 
		({21/10},{0.65},{(10/21)*(0.65)*(0.65)})
  \foreach \t in {0.65,0.7,...,1}
    {
        --({21/10},{\t},{10*(\t*\t)/21})
    }
       --  ({21/10},{1},{10/21}) ;
  \draw[red, thick, ->] 
       ({21/10},{1},{10/21})  --  ({21/10},{1.05},{(10/21)*(1.05)*(1.05)})
    ;
    \draw[red, thick] 
		({21/10},{0.65},{(10/21)*(0.65)*(0.65)})
  \foreach \t in {0.65,0.6,...,0}
    {
        --({21/10},{\t},{10*(\t*\t)/21})
    }
       --  ({21/10},{0},{0}) ;
 \draw[red, thick, <-] 
    ({21/10},{0},{0})  --  ({21/10},{0.05},{(10/21)*(0.05)*(0.05)})
    ;


\node at  ({1.5}, {(1.5)*(0.35)}, {(1.5)*(0.35)*(0.35)}) {\Large$\theta$};
\node[above,scale=1.5] at  ({5/4}, {3/5}, {3/8}) {\Large$\mathcal{C}$};

\node[below] at  ({0}, {-3/4}, {0}) {\large $\xi_1$};		
\node[below] at  ({1/2}, {0}, {0}) {\large $\xi_2$};	
\node[left] at  ({0}, {0}, {7/16}) {\large $\xi_3$};

		\end{scope}
			\end{tikzpicture}}
\caption{The simplest case of the parabolic rescaling lemma, corresponding to the phase $\phi(x, t; \xi) := x_1\xi_1 + x_2 \xi_2 + t \xi_1^2/\xi_2$. Here each plate is associated with a subset of the conic surface $\mathcal{C}$ defined by $\xi_3 = \xi_1^2/\xi_2$ for $1/2 \leq \xi_2 \leq 2$. The key observation is that there exists an affine transformation of the ambient space which essentially maps $\theta$ to the whole of $\mathcal{C}$.}

\end{center}
\end{figure}

It remains to show the validity of the inequality \eqref{parabolic 2} for each $B_{R/\rho^2} \in \mathcal{B}_{R/\rho^2}$. Let $\mathrm{L} \in \mathrm{GL}(n,\R)$ be such that $\mathrm{L}e_n=e_n$ and
\begin{equation}\label{law of inertia}
\partial_{\eta' \eta'}^2 \partial_t \tilde{\phi}_\mathrm{L} (0,0;e_n)=I_{n-1,\sigma_+}
\end{equation}
for some $0 \leq \sigma_+ \leq n-1$, where
$$
\tilde{\phi}_\mathrm{L}(y,t;\eta):=\tilde{\phi}(\mathrm{L}^{-1} y, t ; \mathrm{L}\eta).
$$
Observe that $\mathrm{L}$ is a composition of a rotation and an anisotropic dilation given by the matrix $\mathrm{diag}(\sqrt{|\mu_1|}, \dots, \sqrt{|\mu_{n-1}|}, 1)$ where the $\mu_j$ are the eigenvalues of \eqref{matrix eigenvalues} evaluated at $(0,0;e_n)$. By a linear change of both the $y$ and $\eta$ variables, it suffices to show that \eqref{parabolic 2} holds with $\tilde{T}^{\lambda/\rho^2}\tilde{g}$ replaced with $\tilde{T}_{\mathrm{L}}^{\lambda/\rho^2}\tilde{g}_{\mathrm{L}}$ where $\tilde{T}_{\mathrm{L}}^{\lambda/\rho^2}$ is defined with respect to the datum $(\tilde{\phi}_{\mathrm{L}}, \tilde{a}_{\mathrm{L}})$ for $\tilde{\phi}_\mathrm{L}$ as above, $\tilde{a}_\mathrm{L}(y,t;\eta):=\tilde{a}(\mathrm{L}^{-1}y, t ; \mathrm{L} \eta)$ and $\tilde{g}_\mathrm{L} := |\det \mathrm{L}| \cdot \tilde{g} \circ \mathrm{L}$. This would follow from the definition of $\mathfrak{D}_{\mathbf{1}}^{\varepsilon}(\lambda; R)$ and Lemma~\ref{basic rescaling lemma} provided that the new datum $(\tilde{\phi}_\mathrm{L}, \tilde{a}_\mathrm{L})$ is of type $(1,1,\bar{C})$ for some suitable choice of constant $\bar{C} \geq 1$. It is remarked that the amplitude $\tilde{a}_\mathrm{L}$ may not satisfy the required support conditions described at the beginning of $\S$\ref{subsection: properties of the phase}; however, by decomposing the operator, as in the proof of Lemma~\ref{basic rescaling lemma}, this issue may easily be resolved. On the other hand, if $\bar{C}$ is suitably chosen, it is clear that $\tilde{a}_L$ satisfies the required margin condition. 

To verify the remaining hypotheses in the definition of type $(1,1,\bar{C})$ data, first note that, by retracing the steps of the argument prior to \eqref{parabolic 1}, one deduces that
\begin{equation}\label{verifying geometric conditions 0}
\tilde{\phi}_\mathrm{L}(y,t;\eta) = \rho^2 \phi( \Upsilon_{\omega}( D_{\rho^{-1}}' \circ \mathrm{L}^{-1} y,t), t; \eta_n\omega + \rho^{-1} \mathrm{L}'\eta', \eta_n).
\end{equation}
Alternatively, using \eqref{parabolic 1} directly, $\tilde{\phi}_\mathrm{L}(y,t; \eta)$ can be expressed as
\begin{equation}\label{taylor expanded tilde B}
\langle y, \eta \rangle +  \int_0^1 (1-r)\langle \partial_{\xi'\xi'}^2 \phi(\Upsilon_{\omega}( D_{\rho^{-1}}' \circ \mathrm{L}^{-1} y,t);\eta_n\omega + r\rho^{-1} \mathrm{L}'\eta', \eta_n) \mathrm{L}' \eta', \mathrm{L}' \eta'  \rangle\,\ud r,
\end{equation}
where $\mathrm{L}'$ is the top-left $(n-1) \times (n-1)$ submatrix of $\mathrm{L}$. These two formul\ae\, are used in conjunction to yield bounds on various derivatives of $\tilde{\phi}_{\mathrm{L}}$. To this end, it is also useful to note that, by definition of $\Upsilon$ and the inverse function theorem,
\begin{equation*}
\partial_{y}\Upsilon(y,t; \omega, 1) = \partial_{\xi x}^2 \phi (\Upsilon_{\omega}(y,t); \omega, 1))^{-1}, 
\end{equation*}
so each entry $\partial_{y_j} \Upsilon^i(y,t;\omega,1)$ of the above matrix may be written as the product of $[\det(\partial_{ \xi x}^2 \phi (\Upsilon_{\omega}(y,t); \omega, 1))]^{-1}$ and a polynomial expression in $(\partial_{ \xi_l} \partial_{x_k} \phi)(\Upsilon_{\omega}(y,t); \omega, 1)$.

First consider the technical conditions on the derivatives. Differentiating the formula \eqref{verifying geometric conditions 0} and assuming $\rho$ is sufficiently large, depending on $\phi$, immediately implies that $(\tilde{\phi}_\mathrm{L}, \tilde{a}_\mathrm{L})$ satisfies conditions D1$_{\mathbf{1}}$) and D2$_{\mathbf{1}}$) for $|\beta'| \geq 2$. The remaining cases of D1$_{\mathbf{1}}$) and D2$_{\mathbf{1}}$) can then be readily deduced by differentiating \eqref{taylor expanded tilde B}.

Concerning H1$_{\mathbf{1}}$), by differentiating \eqref{taylor expanded tilde B} and using the conditions D1$_{\mathbf{A}}$) and D2$_{\mathbf{A}}$) of $(\phi, a)$, one deduces that
\begin{equation*}
\partial_{\eta y}^2 \tilde{\phi}_\mathrm{L} (y,t; \eta)= I_n + O_{\phi}(\rho^{-1}).
\end{equation*}
Thus, H1$_{\mathbf{1}}$) holds for $(\tilde{\phi}_\mathrm{L}, \tilde{a}_\mathrm{L})$ provided $\rho$ is sufficiently large depending on $\phi$. Note that the conditions D1$_{\mathbf{A}}$) and D2$_{\mathbf{A}}$) are used here so as to ensure the dependence on $\phi$ is as described in Remark~\ref{parabolic rescaling remark}.

Concerning H2$_{\mathbf{1}})$, the homogeneity of $\phi$ and \eqref{law of inertia} imply that
\begin{equation*}
\partial_{\eta' \eta'}^2 \partial_t \tilde{\phi}_\mathrm{L} (z; \eta) - \frac{1}{\eta_n} I_{n-1, \sigma_+} = \frac{1}{\eta_n} \big(\partial_{\eta' \eta'}^2 \partial_t \tilde{\phi}_\mathrm{L} (z; \eta'/\eta_n, 1) - \partial_{\eta' \eta'}^2 \partial_t \tilde{\phi}_\mathrm{L} (0; e_n)\big).
\end{equation*}
In particular, for $1 \leq i, j \leq n-1$, the $(i,j)$ entry of the above matrix equals 
\begin{equation*}
 \int_0^1 \langle \partial_{\eta'} \partial_{\eta_i \eta_j}^2 \partial_t \tilde{\phi}_\mathrm{L} (rz; r\eta'/\eta_n, 1), \eta'/\eta_n \rangle + \langle \partial_z \partial^2_{\eta_i \eta_j} \partial_t \tilde{\phi}_\mathrm{L}(rz; r\eta'/\eta_n, 1), z  \rangle \ud r .
\end{equation*}
Since it has been shown above that the datum $(\tilde{\phi}_\mathrm{L}, \tilde{a}_\mathrm{L})$ satisfies D1$_{\mathbf{1}}$) and D2$_{\mathbf{1}})$, the integrand in the above expression may now be bounded above in absolute value by $c_{\mathrm{par}}$. Thus, $(\tilde{\phi}_\mathrm{L}, \tilde{a}_\mathrm{L})$ also satisfies H2$_{\mathbf{1}}$) follows and therefore is of type $(1,1,\bar C)$, as required.
 
\end{proof}




\subsection{Approximation by extension operators} This subsection deals with an approximation lemma which allows one to use Theorem~\ref{translation-invariant decoupling theorem} to bound variable coefficient operators at small spatial scales.

Let $T^{\lambda}$ be an operator associated to a type $\mathbf{1}$ datum $(\phi, a)$. For each $\bar{z} \in \R^{n+1}$ with $\bar{z}/\lambda \in Z$ the map $\eta \mapsto (\partial_z\phi^{\lambda})(\bar{z}; \Psi^{\lambda}(\bar{z}; \eta))$ is a graph parametrisation of a hypersurface $\Sigma_{\bar{z}}$ with precisely one vanishing principal curvature at each point. In particular, recalling \eqref{Psi}, one has
\begin{equation*}
\langle z, (\partial_z\phi^{\lambda})(\bar{z}; \Psi^{\lambda}(\bar{z}; \eta)) \rangle = \langle x, \eta  \rangle + t h_{\bar{z}}(\eta) \qquad \textrm{for all $z = (x,t) \in \R^{n+1}$}
\end{equation*}
where $h_{\bar{z}}(\eta) := (\partial_t\phi^{\lambda})(\bar{z}; \Psi^{\lambda}(\bar{z}; \eta))$. Let $E_{\bar{z}}$ denote the extension operator associated to $\Sigma_{\bar{z}}$, given by
\begin{equation*}
E_{\bar{z}}g(x,t) := \int_{\hat{\R}^n} e^{ i (\langle x, \eta  \rangle + th_{\bar{z}}(\eta))} a_{\bar{z}}(\eta) g(\eta)  \,\ud \eta \qquad \textrm{for all $(x,t) \in \R^{n+1}$}
\end{equation*}
where $a_{\bar{z}}(\eta) := a_2\circ\Psi^{\lambda}(\bar{z}; \eta)|\det \partial_{\eta} \Psi^{\lambda}(\bar{z}; \eta)|$. The operator $T^\lambda$ is effectively approximated by $E_{\bar z}$ at small spatial scales. Furthermore, the conditions on the translation-invariant decoupling inequality, Theorem~\ref{translation-invariant decoupling theorem}, are satisfied by each of the functions $h_{\bar{z}}$. In particular, the type $\mathbf{1}$ condition implies the following uniform bound.

\begin{lemma}\label{uniformity curvature extension}
Let $(\phi, a)$ be a type $\mathbf{1}$ datum. Each eigenvalue $\mu$ of $\partial_{\eta' \eta'} h_{\bar z}$ satisfies $|\mu| \sim 1$ on $\mathrm{supp}\,a_{\bar{z}}$.
\end{lemma}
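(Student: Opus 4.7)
The plan is to differentiate $h_{\bar z}(\eta) = (\partial_t\phi^\lambda)(\bar z; \Psi^\lambda(\bar z;\eta))$ twice via the chain rule and to split the resulting Hessian into a main term whose $(n-1)\times(n-1)$ top-left block is prescribed by the H2$_{\mathbf{1}}$) hypothesis, plus a lower-order remainder which is small by virtue of $c_{\mathrm{par}}$. Writing $\xi := \Psi^\lambda(\bar z;\eta)$, one obtains
\begin{equation*}
\partial^2_{\eta_i\eta_j} h_{\bar z}(\eta) \;=\; \bigl[(\partial_\eta\Psi^\lambda)^{\mathrm{T}}\,\partial^2_{\xi\xi}\partial_t\phi^\lambda\,(\partial_\eta\Psi^\lambda)\bigr]_{ij} \;+\; \sum_{k=1}^{n}(\partial_{\xi_k}\partial_t\phi^\lambda)\,(\partial^2_{\eta_i\eta_j}\Psi^{\lambda})_k.
\end{equation*}
Differentiating \eqref{Psi} yields $\partial_\eta\Psi^\lambda = [\partial^2_{\xi x}\phi^\lambda(\bar z;\Psi^\lambda)]^{-1}$, and H1$_{\mathbf{1}}$) then gives $\partial_\eta\Psi^\lambda = I_n + O(c_{\mathrm{par}})$. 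Restricting to $i,j\leq n-1$, the conjugation reduces the first term to $\partial^2_{\xi'\xi'}\partial_t\phi^\lambda(\bar z;\xi)$ up to an $O(c_{\mathrm{par}})$ error (the off-diagonal entries of $\partial^2_{\xi\xi}\partial_t\phi^\lambda$ that enter through the conjugation being $O(1)$ by Euler's identity applied to H2$_{\mathbf{1}}$)), and H2$_{\mathbf{1}}$) identifies this as $(1/\xi_n)I_{n-1,\sigma_+} + O(c_{\mathrm{par}})$.

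To control the remainder, set $M(\eta) := \partial^2_{\xi x}\phi^\lambda(\bar z;\Psi^\lambda(\bar z;\eta))$ and differentiate $\partial_\eta\Psi^\lambda = M^{-1}$ to obtain $\partial^2_{\eta\eta}\Psi^\lambda = -M^{-1}(\partial_\eta M)M^{-1}$; by the chain rule this is controlled by entries of $\partial^3_{\xi\xi x}\phi$ evaluated at $(\bar z/\lambda;\xi)$. Components of this tensor with $|\beta'|\geq 2$ are $O(c_{\mathrm{par}})$ by D1$_{\mathbf{1}}$), while the remaining components (in which one or both $\xi$-derivatives fall on $\xi_n$) reduce to these via iterated application of Euler's identity $\sum_l \xi_l\partial_{\xi_l}(\partial^2_{\xi_a x_b}\phi) = 0$, valid because $\partial^2_{\xi x}\phi$ is homogeneous of degree zero in $\xi$, combined with the support bound $|\xi_l/\xi_n|\leq 1$. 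Hence $\partial^2_{\eta\eta}\Psi^\lambda = O(c_{\mathrm{par}})$, and since each $\partial_{\xi_k}\partial_t\phi^\lambda$ is homogeneous of degree zero and thus bounded on the compact frequency support, the full remainder term is also $O(c_{\mathrm{par}})$.

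Combining the two contributions yields $\partial^2_{\eta'\eta'} h_{\bar z}(\eta) = (1/\xi_n) I_{n-1,\sigma_+} + O(c_{\mathrm{par}})$ uniformly in $\bar z$. On $\mathrm{supp}\,a_{\bar z}$ one has $\xi \in \mathrm{supp}\,a_2$ and therefore $\xi_n \in [1/2,2]$; the leading matrix has eigenvalues of absolute value in $[1/2,2]$, and, since $c_{\mathrm{par}}$ is a fixed small constant, the $O(c_{\mathrm{par}})$ perturbation preserves the conclusion $|\mu|\sim 1$ for every eigenvalue of $\partial^2_{\eta'\eta'} h_{\bar z}$ on that support.

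The main obstacle I anticipate is the careful bookkeeping needed to reduce the derivatives of $\partial^3_{\xi\xi x}\phi$ with components along $\xi_n$ to the $|\beta'|\geq 2$ case handled directly by D1$_{\mathbf{1}}$); without this reduction the remainder $\sum_k (\partial_{\xi_k}\partial_t\phi^\lambda)\,(\partial^2_{\eta\eta}\Psi^{\lambda})_k$ would only be known a priori to be $O(1)$ and could swamp the main term.
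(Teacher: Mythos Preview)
The paper declines to prove this lemma, dismissing it as ``an elementary calculus exercise, the details of which are omitted.'' Your computation is exactly what that exercise amounts to, and the structure---identifying the main term via H2$_{\mathbf{1}}$) after conjugating by $\partial_\eta\Psi^\lambda = I_n + O(c_{\mathrm{par}})$, then bounding the second-derivative remainder by controlling $\partial^2_{\eta\eta}\Psi^\lambda$ through D1$_{\mathbf{1}}$) and Euler's relation for the degree-zero tensor $\partial^2_{\xi x}\phi$---is correct. The Euler reduction you flagged as the anticipated obstacle works cleanly: since $\partial_{\xi_a}\partial_{x_b}\phi$ is homogeneous of degree zero, each component of $\partial^3_{\xi\xi x}\phi$ involving a $\xi_n$-derivative is a bounded combination of the $|\beta'|\geq 2$ components handled directly by D1$_{\mathbf{1}}$).

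One point deserves care. Your remainder bound requires $|\partial_{\xi_k}\partial_t\phi| = O(1)$ \emph{uniformly over type $\mathbf{1}$ data}, not merely bounded for each fixed $\phi$, since the constants in $|\mu|\sim 1$ must be uniform for the induction in \S\ref{subsec:proof} to close. Homogeneity and compactness of the frequency support give boundedness for a fixed phase but not uniformity: the conditions H1$_{\mathbf{1}}$)--D2$_{\mathbf{1}}$) do not by themselves control first $\xi$-derivatives of $\partial_t\phi$ (for instance, phases of the form $\langle x,\xi\rangle + t(C\xi_n + |\xi'|^2/2\xi_n) + \epsilon\, x_n\,|\xi'|^2/\xi_n$ with $\epsilon\sim c_{\mathrm{par}}$ satisfy all of them while $\partial_{\xi_n}\partial_t\phi\sim C$ is arbitrary). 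What supplies the missing control is the standing normalisation $\partial_\xi\partial_t\phi(0;e_n)=0$ imposed in \S\ref{subsection: properties of the phase}. Granting it, one writes $\partial_{\xi_k}\partial_t\phi(z;\xi)$ as an increment from $(0;e_n)$ and bounds the $z$-variation by D2$_{\mathbf{1}}$) and the $\xi'$-variation by H2$_{\mathbf{1}}$) together with Euler (exactly as in your treatment of $\partial^3_{\xi\xi x}\phi$), obtaining the uniform $O(1)$ bound your argument needs.
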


The proof of this lemma is an elementary calculus exercise, the details of which are omitted. 

Concerning the approximation of $T^\lambda$ by $E_{\bar z}$, suppose that $1 \leq K \leq \lambda^{1/2}$ and $z \in B(\bar{z}, K) \subseteq B(0, 3\lambda/4)$; this containment property may be assumed in view of the margin condition M$_\mathbf{1}$). By applying the change of variables $\xi = \Psi^\lambda(\bar z; \eta)$ and a Taylor expansion of $\phi^\lambda$ around the point $\bar z$,
\begin{equation}\label{simple 1}
T^{\lambda}f(z) = \int_{\hat{\R}^n} e^{ i (\langle z-\bar{z}, (\partial_z\phi^{\lambda})(\bar{z}; \Psi^{\lambda}(\bar{z}; \eta)) \rangle+\mathcal{E}^{\lambda}_{\bar{z}}(z-\bar{z}; \eta))}  a_1^{\lambda}(z)a_{\bar{z}}(\eta)f_{\bar{z}}(\eta)\,\ud \eta 
\end{equation}
where $f_{\bar{z}} := e^{i \phi^{\lambda}(\bar{z}; \Psi^{\lambda}(\bar{z};\,\cdot\,))} f\circ \Psi^{\lambda}(\bar{z}; \,\cdot\,)$ and, by Taylor's theorem,
\begin{equation}\label{error 1}
\mathcal{E}^{\lambda}_{\bar{z}}(v; \eta) = \frac{1}{\lambda}\int_0^1 (1-r) \langle (\partial_{zz}^2\phi)((\bar{z}+rv)/\lambda; \Psi^{\lambda}(\bar{z};\eta))v, v \rangle\,\ud r.
\end{equation}
Since $K \leq \lambda^{1/2}$ and $(\phi, a)$ is type $\mathbf{1}$, so that property D2$_{\mathbf{1}}$) holds, it follows that 
\begin{equation*}
\sup_{(v;\eta) \in B(0,K) \times \mathrm{supp}\,a_{\bar{z}}}|\partial_{\xi}^{\beta} \mathcal{E}^{\lambda}_{\bar{z}}(v;\eta)| \lesssim_{N} 1
\end{equation*}
for all $\beta \in \N_0^n$ with $1 \leq |\beta| \leq 4N$.  Consequently, $\mathcal{E}^{\lambda}_{\bar{z}}(z-\bar{z}; \xi)$ does not contribute significantly to the oscillation induced by the exponential in \eqref{simple 1} and it can therefore be safely removed from the phase, at the expense of some negligible error terms.

\begin{lemma}\label{approximation lemma}
Let $T^\lambda$ be an operator associated to a type $\mathbf{1}$ datum $(\phi, a)$. Let $0 < \delta \leq 1/2$, $1 \leq K \leq \lambda^{1/2 - \delta}$ and $\bar{z}/\lambda \in Z$ so that $B(\bar z, K) \subseteq B(0, 3\lambda/4)$. Then
\begin{enumerate}[i)]
    \item \begin{equation}\label{T dominated by E}
\| T^\lambda f \|_{L^p(w_{B(\bar{z},K)})} \lesssim_{N} \| E_{\bar{z}} f_{\bar{z}} \|_{L^p(w_{B(0,K)})} + \lambda^{-\delta N/2 }  \| f\|_{L^2(\hat{\R}^n)}
\end{equation}
holds provided $N$ is sufficiently large depending on $n$, $\delta$ and $p$.
    \item Suppose that $|\bar{z}| \leq \lambda^{1- \delta'}$. There exists a family of operators $\mathbf{T}^{\lambda}$ all with phase function $\phi$ and assoicated to type $(1,1,C)$ data such that 
    \begin{equation}\label{E dominated by T}
\| E_{\bar{z}} f_{\bar{z}} \|_{L^p(w_{B(0,K)})} \lesssim_{N} \| T_*^\lambda f \|_{L^p(w_{B(\bar{z},K)})}  + \lambda^{-\min\{\delta, \delta' \} N/2 } \| f\|_{L^2(\hat{\R}^n)}
\end{equation}
holds for some $T_*^{\lambda} \in \mathbf{T}^{\lambda}$ provided $N$ is sufficiently large depending on $n$, $\delta$ and $p$. Moreover, the family $\mathbf{T}^{\lambda}$ has cardinality $O_N(1)$ and is independent of the choice of ball $B(\bar{z}, K)$.
\end{enumerate}
\end{lemma}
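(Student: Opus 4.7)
The plan is to exploit the fact that the hypothesis $K \leq \lambda^{1/2-\delta}$ sharpens the estimate on $\mathcal{E}_{\bar z}^\lambda$ recalled in the paragraph preceding the statement: inspection of \eqref{error 1} together with the condition D2$_{\mathbf{1}}$) in fact gives
\begin{equation*}
\sup_{(v;\eta) \in B(0,K) \times \supp a_{\bar{z}}} |\partial_\eta^\beta \mathcal{E}_{\bar z}^\lambda(v;\eta)| \lesssim_\beta K^2/\lambda \leq \lambda^{-2\delta}
\end{equation*}
for all $|\beta| \leq 4N$. Thus $e^{i\mathcal{E}_{\bar z}^\lambda(v;\eta)}$ is a multiplier close to $1$ with uniformly small $\eta$-derivatives, and the strategy for both (i) and (ii) is to Fourier-expand $e^{\pm i \mathcal{E}_{\bar z}^\lambda(v;\eta)}\chi(\eta)$ in $\eta$ on a fixed cube $Q$ containing the support of a smooth cutoff $\chi \in C_c^\infty(\hat{\R}^n)$ that equals $1$ on $\supp a_{\bar z}$; integration by parts combined with Fa\`a di Bruno's formula yields Fourier coefficients which, after subtracting the contribution of $\chi$ itself, decay like $\lambda^{-2\delta}(1+|k|)^{-M}$ for any $M \leq 4N$, since every $\eta$-derivative of $e^{i\mathcal{E}_{\bar z}^\lambda}-1$ carries a factor $\lambda^{-2\delta}$.

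For part (i), insert the decomposition $e^{i\mathcal{E}_{\bar z}^\lambda(v;\eta)}\chi(\eta) = \chi(\eta) + \sum_{k \in \Z^n}\epsilon_k(v) e^{i\langle k, \eta\rangle/L}$ into \eqref{simple 1}. The $\chi$-term collapses to $a_1^\lambda(v+\bar z) E_{\bar z} f_{\bar z}(v)$ because $\chi \equiv 1$ on $\supp a_{\bar z}$, while a generic Fourier mode $e^{i\langle k,\eta\rangle/L}$ inside the integrand produces $a_1^\lambda(v+\bar z) \epsilon_k(v) E_{\bar z} f_{\bar z}(v + (k/L,0))$. Taking $L^p(w_{B(\bar z, K)})$ norms, using Minkowski's inequality and the standard translate bound $w_{B(0,K)}(\,\cdot\,-y) \leq (1+|y|/K)^N w_{B(0,K)}$, and summing the resulting convergent series yields \eqref{T dominated by E}, in fact without the $L^2$ error; the stated error term is generous and may, if desired, be obtained honestly by Taylor-expanding $e^{i\mathcal{E}_{\bar z}^\lambda}$ to order $J \asymp N$ and controlling the remainder via the trivial bound $\|E_{\bar z}g\|_\infty \lesssim \|g\|_{L^2}$ on the compact-support integral.

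For part (ii), the family $\mathbf{T}^\lambda$ is constructed first: fix a finite collection $\{\tilde a_{1,j}\}_{j=1}^{J(N)} \subset C_c^\infty(Z)$ such that each $\tilde a_{1,j}$ equals $1$ on some open set and these open sets collectively cover every point that can arise as $\bar z/\lambda$ when $|\bar z| \leq \lambda^{1-\delta'}$. A single bump actually suffices when $\lambda$ is large (since then $|\bar z|/\lambda \leq \lambda^{-\delta'}$ places $\bar z/\lambda$ in an arbitrarily small neighbourhood of $0$), but the finite family is kept for uniformity; the resulting operators $T_{*,j}^\lambda$, with amplitude $\tilde a_{1,j}(z) a_2(\xi)$ and phase $\phi$, are of type $(1,1,C)$ for an absolute $C$. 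Given $\bar z$, pick $j$ so that $\tilde a_{1,j}^\lambda \equiv 1$ on $B(\bar z, K)$; then for $v \in B(0,K)$ one reads off from the analogue of \eqref{simple 1} that $T_{*,j}^\lambda f(v+\bar z) = \int e^{i\langle v, (\partial_z\phi^\lambda)(\bar z;\Psi^\lambda(\bar z;\eta))\rangle + i \mathcal{E}_{\bar z}^\lambda(v;\eta)} a_{\bar z}(\eta) f_{\bar z}(\eta)\,\ud\eta$. Substituting the Fourier expansion of $e^{-i\mathcal{E}_{\bar z}^\lambda(v;\eta)}\chi(\eta)$ into the identity
\begin{equation*}
E_{\bar z} f_{\bar z}(v) = \int e^{i\langle v, (\partial_z\phi^\lambda)(\bar z;\Psi^\lambda(\bar z;\eta))\rangle + i\mathcal{E}_{\bar z}^\lambda(v;\eta)} \cdot e^{-i\mathcal{E}_{\bar z}^\lambda(v;\eta)}\chi(\eta) a_{\bar z}(\eta) f_{\bar z}(\eta)\,\ud\eta
\end{equation*}
and running the argument of (i) in reverse then delivers \eqref{E dominated by T}.

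The main technical obstacle lies in (ii): the Fourier expansion produces integrals of the form $\int e^{i\langle v+v_k, (\partial_z\phi^\lambda)(\bar z;\Psi^\lambda(\bar z;\eta))\rangle + i\mathcal{E}_{\bar z}^\lambda(v;\eta)} a_{\bar z} f_{\bar z}\,\ud\eta$, in which the linear phase is evaluated at the shifted point $v+v_k$ (with $v_k = (k/L,0)$) but $\mathcal{E}_{\bar z}^\lambda$ is still evaluated at $v$. To identify each such integral with $T_*^\lambda f(v+v_k+\bar z)$ one must replace $\mathcal{E}_{\bar z}^\lambda(v;\eta)$ by $\mathcal{E}_{\bar z}^\lambda(v+v_k;\eta)$, incurring an error dominated by $|v_k| \cdot K/\lambda$ via the pointwise bound $|\partial_v \mathcal{E}_{\bar z}^\lambda| \lesssim K/\lambda$; and one must keep $v+v_k+\bar z$ inside the region where $\tilde a_{1,j}^\lambda \equiv 1$, a constraint ensured by $|\bar z| \leq \lambda^{1-\delta'}$ precisely when $|v_k|$ is not too large. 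These considerations force truncating the Fourier sum at a threshold $|k| \leq K_0$ depending on $N$, and the tail $|k| > K_0$ is then disposed of pointwise by Cauchy--Schwarz against the compact $\eta$-support of $a_{\bar z}$, producing the claimed error $\lambda^{-\min\{\delta,\delta'\} N/2}\|f\|_{L^2}$.
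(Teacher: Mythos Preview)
Your argument for part (i) has a gap: the Fourier–coefficient bound $|\epsilon_k(v)| \lesssim \lambda^{-2\delta}(1+|k|)^{-M}$ is valid only for $|v|\le K$, while the weighted norm $\|\,\cdot\,\|_{L^p(w_{B(\bar z,K)})}$ integrates over all of $\R^{n+1}$.  More generally, integration by parts gives usable decay in $k$ only on the region $|v|\lesssim\lambda^{1/2}$, and on the complementary region $|v|>2\lambda^{1/2}$ one has no pointwise control; this tail is precisely what produces the $\lambda^{-\delta N/2}\|f\|_{L^2}$ term.  The paper splits the weighted integral at $|v|=2\lambda^{1/2}$, uses the Fourier–series argument on the inner piece, and on the outer piece combines the trivial bound $\|T^\lambda f\|_{L^p(\tilde w)}\lesssim K^{n/p}\|f\|_{L^2}$ with the observation that $w_{B(\bar z,K)}\lesssim\lambda^{-\delta(N-n+2)}$ there (this is where $K\le\lambda^{1/2-\delta}$ enters).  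So the $L^2$ error is not ``generous'': for the weighted norm it is genuine, and your claim that \eqref{T dominated by E} holds without it is incorrect (it is true only if $w_{B(\bar z,K)}$ is replaced by $\chi_{B(\bar z,K)}$, as the paper's remark after the lemma notes).

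The difficulty in part (ii) is more serious.  You correctly isolate the mismatch $\mathcal{E}_{\bar z}^\lambda(v;\eta)\neq\mathcal{E}_{\bar z}^\lambda(v+v_k;\eta)$, but your proposed fix does not close: replacing $\mathcal{E}(v;\eta)$ by $\mathcal{E}(v+v_k;\eta)$ leaves an extra oscillatory factor $e^{i[\mathcal{E}(v;\eta)-\mathcal{E}(v+v_k;\eta)]}$ inside the $\eta$–integral, and while this factor is pointwise close to $1$ for small $|v_k|$, it still depends on $\eta$ and cannot simply be pulled outside.  One would have to Fourier–expand it again, landing back in the same situation.  The paper sidesteps this entirely by \emph{not} interpreting $e^{i\langle\ell,\eta\rangle}$ as a spatial translation of $T^\lambda$.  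Instead it undoes the change of variables $\xi=\Psi^\lambda(\bar z;\eta)$, so that (recalling $\eta=\partial_x\phi(\bar z/\lambda;\xi)$) the factor becomes $e^{i\langle\ell,\partial_x\phi(\bar z/\lambda;\xi)\rangle}$ and is absorbed as a \emph{modulation of $f$}: one obtains exactly $T^\lambda\big[e^{i\langle\ell,\partial_x\phi(\bar z/\lambda;\cdot)\rangle}f\big](\bar z+v)$ with no spatial shift and hence no $\mathcal{E}$–mismatch.  The remaining $\bar z$–dependence in the modulation is then removed by Taylor–expanding $e^{i\langle\ell,\partial_x\phi(\bar z/\lambda;\xi)\rangle}$ in $\bar z/\lambda$ to order $N$ (using $|\bar z|\le\lambda^{1-\delta'}$); this is what generates the finite, $\bar z$–independent family $\mathbf T^\lambda$, which thus consists of operators with amplitudes $u_\alpha(\xi)\tilde a(z;\xi)$ indexed by $|\ell|\le C_N$ and $|\alpha|<N$, rather than the spatial bumps $\tilde a_{1,j}$ you propose.
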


\begin{remark}
\begin{enumerate}[i)]
 \item The weights appearing in Lemma~\ref{approximation lemma} are defined with respect to the same $N \in \N$ as that appearing in the $\lambda$ exponent. This is also understood to be the same $N$ as that appearing in the definition of the D2$_{\mathbf{A}}$) condition. 
 \item If one replaces $w_{B(\bar{z},K)}$ with the characteristic function $\chi_{B(\bar{z},K)}$ on the left-hand side of \eqref{T dominated by E}, then the proof of Lemma~\ref{approximation lemma} shows that the inequality holds without the additional $\lambda^{-\delta N/2 }  \| f\|_{L^2(\hat{\R}^n)}$ term.
\end{enumerate}

\end{remark}

Several variants of this kind of approximation (or stability) lemma have previously appeared in the literature: see, for instance, \cite[Chapter VI, $\S$2]{Stein1993} or \cite{Christ1988, Tao1999}. In the context of decoupling, Lemma~\ref{approximation lemma} is closely related to certain approximation arguments used to extend decoupling estimates to wider classes of surfaces in \cite{Pramanik2007, Garrigos2010, Guo} and \cite[$\S\S$7-8]{Bourgain2015}. A variant of Lemma~\ref{approximation lemma} (which is somewhat cleaner than the above statement) can also be applied to slightly simplify the original proof of the decoupling theorem in \cite{Bourgain2015, Bourgain2017} and, in particular, obviate the need to reformulate the problem in terms of functions with certain Fourier support conditions (the details of the original `reformulation' approach are given in \cite[$\S$5]{Bourgain2017}).

\begin{proof}[Proof (of Lemma~\ref{approximation lemma})] Note that $f$ in \eqref{simple 1} may be replaced by $f \psi$ where $\psi$ is a smooth function that equals 1 on $\mathrm{supp}\,a_{\bar{z}}$ and vanishes outside its double. Moreover, recalling the definition of $a_{\bar{z}}$ and that $(\phi, a)$ is a type $\mathbf{1}$ datum, one may assume that the function $\psi$ is supported in $[0,2\pi]^n$. In view of the expression \eqref{simple 1}, by performing a Fourier series decomposition of $e^{ i \mathcal{E}_{\bar{z}}^\lambda (v,\eta)} \psi(\eta)$ in the $\eta$ variable, one may write
\begin{equation}\label{Fourier series}
e^{ i\mathcal{E}_{\bar{z}}^\lambda (v; \eta)} \psi(\eta) = \sum_{ \ell \in \Z^{n}} b_{\ell}(v) e^{ i \langle\ell , \eta \rangle}
\end{equation}
where
\begin{equation*}
b_{\ell}(v)= \int_{[0,2\pi]^n} e^{- i  \langle\ell,  \eta \rangle } e^{ i \mathcal{E}_{\bar{z}}^\lambda (v; \eta)} \psi(\eta) \ud \eta.
\end{equation*}
The formula \eqref{error 1} and property D2$_\mathbf{1}$) of the phase together imply that
\begin{equation*}
\sup_{\eta \in  [0,2\pi]^n}|\partial_{\eta}^{\beta} \mathcal{E}^{\lambda}_{\bar{z}}(v;\eta)| \lesssim_N \frac{|v|^2}{\lambda} 
\end{equation*}
for all multi-indices $\beta \in \N$ with $1 \leq |\beta| \leq N$. Therefore, by repeated application of integration-by-parts, one deduces that
\begin{equation*}
|b_{\ell}(v)| \lesssim_{N} (1 + |\ell|)^{-N} \quad \textrm{whenever $|v| \leq 2\lambda^{1/2}$}.
\end{equation*}
This, \eqref{Fourier series} and \eqref{simple 1} lead to the useful pointwise estimate
\begin{equation}\label{approximation 1}
|T^\lambda f(\bar{z}+v)| \lesssim_N \sum_{\ell \in \Z^{n}}  (1 + |\ell|)^{-N}|E_{\bar{z}} (f_{\bar{z}} e^{i\langle \ell, \,\cdot\,\rangle})(v)|,
\end{equation}
valid for $|v| \leq 2\lambda^{1/2}$. Writing 
\begin{equation*}
\|T^{\lambda}f\|_{L^p(w_{B(\bar{z}, K)})} \leq \|(T^{\lambda}f)\chi_{B(\bar{z},2\lambda^{1/2})}\|_{L^p(w_{B(\bar{z}, K)})} + \|(T^{\lambda}f)\chi_{\R^n\setminus B(\bar{z},2\lambda^{1/2})}\|_{L^p(w_{B(\bar{z}, K)})},
\end{equation*}
it follows from \eqref{approximation 1} that  
\begin{equation}\label{approximation 2}
\|(T^{\lambda}f)\chi_{B(\bar{z},2\lambda^{1/2})}\|_{L^p(w_{B(\bar z, K)})} \lesssim_{N} \sum_{\ell \in \Z^{n}}  (1 + |\ell|)^{-N} \|E_{\bar{z}} (f_{\bar{z}} e^{i\langle \ell, \,\cdot\,\rangle})\|_{L^p(w_{B(0, K)})}.
\end{equation}
On the other hand, it is claimed that 
\begin{equation}\label{approximation 3}
\|(T^{\lambda}f)\chi_{\R^n\setminus B(\bar{z},2\lambda^{1/2})}\|_{L^p(w_{B(\bar{z}, K)})} \lesssim \lambda^{n/2p - \delta(N-n+2) }  \|f\|_{L^2(\hat{\R}^n)}
\end{equation}
and therefore this term can be treated as an error. Indeed, if $|v|>2 \lambda^{1/2}$ and $K \leq \lambda^{1/2-\delta}$, then
\begin{equation*}
(1 + K^{-1} |v|)^{-(N-n+2)} \leq (1 + 2\lambda^{1/2}K^{-1})^{-(N-n+2)} \leq \lambda^{-\delta(N-n+2)}.
\end{equation*}
Combining this observation with the trivial estimate
\begin{equation*}
\| T^{\lambda} f \|_{L^p(\tilde{w}_{B(\bar{z},K)})} \lesssim K^{n/p}  \| f \|_{L^2(\hat{\R}^n)}
\end{equation*}
where $\tilde{w}_{B(0,K)}:=(1+K^{-1}|\,\cdot\,|)^{-(n+2)}$,  one readily deduces \eqref{approximation 3}.

Observe that the operator $E_{\bar{z}}$ enjoys the translation-invariance property 
\begin{equation}\label{approximation 4}
E_{\bar{z}}[e^{i \langle \ell, \,\cdot\,\rangle} g](x,t) = E_{\bar{z}}g(x+\ell,t) \qquad \textrm{for all $(x,t) \in \R^{n+1}$ and all $\ell \in \R^n$;}
\end{equation}
it is for this reason that the graph parametrisation was introduced at the outset of the argument. The identity \eqref{approximation 4} together with \eqref{approximation 2} and \eqref{approximation 3} imply that 
\begin{equation}\label{altogether}
\| T^\lambda f  \|_{L^p(w_{B(\bar{z},K)})} \lesssim_N \sum_{\ell \in \Z^{n}} (1 + |\ell|)^{-N}  \| E_{\bar{z}} f_{\bar{z}}  \|_{L^p(w_{B((\ell, 0),K)})} + \lambda^{-\delta N/2 }  \| f\|_{L^2(\hat{\R}^n)},
\end{equation}
provided $N$ is chosen to be suitably large. One may readily verify that
\begin{equation}\label{dominate weight}
\sum_{\ell \in \Z^{n}} (1+|\ell|)^{-N} w_{B((\ell, 0),K)} \lesssim w_{B(0,K)}
\end{equation}
and combining this with \eqref{altogether} immediately yields the desired estimate \eqref{T dominated by E}.

The proof of \eqref{E dominated by T} is similar to that of \eqref{T dominated by E}, although a slight complication arises since, in contrast with $E_{\bar{z}}$, the variable coefficient operator $T^\lambda$ does not necessarily satisfy the translation-invariance property described in \eqref{approximation 4}. 

One may write
\begin{equation*}
E_{\bar{z}} f_{\bar{z}} (v) =  \int_{\hat{\R}^n} e^{i \phi^{\lambda}(\bar{z}+v; \Psi^{\lambda}(\bar{z},\eta))}   e^{- i \mathcal{E}^{\lambda}_{\bar{z}}(v; \eta)} a_{\bar{z}}(\eta)  f\circ \Psi^{\lambda}(\bar{z}; \eta)\,\ud \eta
\end{equation*}
and, by forming the Fourier series expansion of $e^{- i \mathcal{E}^{\lambda}_{\bar{z}}(v; \eta)}\psi(\eta)$ in $\eta$ and undoing the change of variables $\xi= \Psi^\lambda (\bar z ; \eta)$, thereby deduce that
\begin{equation*}
|E_{\bar{z}} f_{\bar{z}}(v)|  \lesssim_N \sum_{\ell \in \Z^{n}} (1 + |\ell|)^{-4N} |T^\lambda [e^{ i\langle  \ell , (\partial_z\phi^{\lambda})(\bar{z}\,\cdot\,)\rangle } f](\bar{z} + v)|
\end{equation*}
whenever $|v| \leq 2\lambda^{1/2}$. This pointwise bound is understood to hold modulo the choice of spatial cut-off $a_1$ appearing in the definition of $T^{\lambda}$. Taking $L^p(w_{B(\bar{z},K)})$ norms in $z$ and reasoning as in the proof of \eqref{T dominated by E}, one obtains
\begin{equation*}
\| E_{\bar{z}} f_{\bar{z}} \|_{L^p(w_{B(0,K)})} \lesssim_N  \sum_{ \ell \in \Z^{n}} (1 + |\ell|)^{-4N}  \| (T^{\lambda} \tilde{f}_{\ell})\chi_{B(\bar{z}, 2\lambda^{1/2})}  \|_{L^p(w_{B(\bar{z},K)})} + \lambda^{- \delta N/2}  \| f \|_{L^2(\hat{\R}^n)},
\end{equation*}
where $\tilde{f}_{\ell} := e^{ i\langle  \ell , (\partial_z\phi^{\lambda})(\bar{z};\,\cdot\,)\rangle } f$. Note that the cut-off function $\chi_{B(\bar{z}, 2\lambda^{1/2})}$ can be dominated by a smooth amplitude $\tilde{a}_1^{\lambda}$ where $\tilde{a}_1$ is equal to 1 on $\mathrm{supp}\,a_1$ and has half the margin. The above sum is split into a sum over $\ell$ satisfying $|\ell| > C_N$ and a sum over the remaining $\ell$ where $C_N$ is a constant depending on $N$, chosen large enough for the present purpose. To control sum over large $\ell$, apply \eqref{T dominated by E} and argue as in \eqref{dominate weight} to conclude that
\begin{align*}
\sum_{\substack{ \ell \in \Z^{n} \\[1pt] |\ell| > C_N}} (1 + |\ell|)^{-4N}  \| T^{\lambda} \tilde{f}_{\ell} \|_{L^p(w_{B(\bar{z},K)})} &\lesssim_N \sum_{\substack{ \ell \in \Z^{n} \\[1pt] |\ell| > C_N}}  (1 + |\ell|)^{-2N} \| E_{\bar{z}} f_{\bar{z}}   \|_{L^p(w_{B((\ell, 0),K)})} \\
& \lesssim C_N^{-N} \|E_{\bar{z}} f_{\bar{z}}\|_{L^p(w_{B(0,K)})}.
\end{align*}
Therefore, if $C_N$ is chosen to be sufficiently large depending on $N$, the above term can be absorbed into the left-hand side of the inequality and one obtains 
\begin{equation*}
\|E_{\bar{z}} f_{\bar{z}}\|_{L^p(w_{B(0,K)})} \lesssim_N \sum_{\substack{\ell \in \Z^n \\ |\ell| \leq C_N}}\| T^\lambda \tilde{f}_\ell  \|_{L^p(w_{B(\bar{z},K)})} + \lambda^{-\delta N/2 }  \| f\|_{L^2(\hat{\R}^n)}.
\end{equation*}

Each $T^\lambda \tilde{f}_\ell$ can be thought of as an operator $T^{\lambda}_\ell$ where the latter has phase $\phi$ and amplitude function
\begin{equation*}
    \tilde{a}_{\ell}(z;\xi) := \tilde{a}_1(z;\xi)e^{ i\langle  \ell , (\partial_z\phi^{\lambda})(\bar{z};\xi)\rangle } .
\end{equation*}
Unfortunately, these amplitudes depend on the choice of ball $B(\bar{z},K)$ and therefore are unsuitable for the present purpose. To remove this undesirable dependence, one may take a Taylor series expansion to write
\begin{equation}\label{long Taylor expansion}
    e^{ i\langle  \ell , (\partial_z\phi^{\lambda})(\bar{z};\xi)\rangle } = \sum_{|\alpha| \leq N-1} u_{\alpha}(\omega) \Big(\frac{\bar{z}}{\lambda} \Big)^{\alpha} + O\big((|\bar{z}|/\lambda)^{N}\big)
\end{equation}
where each $u_{\alpha} \in C^{\infty}(\R^n)$ satisfies $|\partial_{\xi}^{\beta} u_{\alpha}(\xi)| \lesssim_N 1$ for all $|\beta| \leq N$. Note that the $u_{\alpha}$ do not depend on the choice of $\bar{z}$. Furthermore, since $|\bar{z}| \leq \lambda^{1 - \delta'}$, it follows that the error in \eqref{long Taylor expansion} is $O(\lambda^{- \delta' N})$ and the part of the operator arising from such terms can be bounded by $\lambda^{- \min \{\delta, \delta' \} N/2 }  \| f\|_{L^2(\hat{\R}^n)}$. The family of operators $\mathbf{T}^{\lambda}$ is now given by the family of amplitudes
\begin{equation*}
    u_{\alpha}(\omega)\tilde{a}_{\ell}(z;\xi), \qquad |\ell| \leq C_n, |\alpha| \leq N-1.
\end{equation*}
Since $|\bar{z}|/\lambda \leq 1$, one concludes that 
\begin{equation*}
\|E_{\bar{z}} f_{\bar{z}}\|_{L^p(w_{B(0,K)})} \lesssim_N \sum_{T_*^{\lambda} \in \mathbf{T}^{\lambda}}\| T_*^\lambda \tilde{f}_\ell  \|_{L^p(w_{B(\bar{z},K)})} + \lambda^{-\min \{\delta, \delta' \} N/2 }  \| f\|_{L^2(\hat{\R}^n)}
\end{equation*}
and the desired inequality now holds for some choice of $T_*^{\lambda} \in \mathbf{T}^{\lambda}$ by pigeonholing.

\end{proof}




\subsection{Proof of the variable coefficient decoupling estimates}\label{subsec:proof}

By the discussion in $\S\S$\ref{subsection: properties of the phase}-\ref{subsection: parabolic rescaling}, to prove Theorem~\ref{decoupling theorem} for the fixed parameters $2 \varepsilon$, $M$ and $p$ it suffices to show 
\begin{equation*}
\mathfrak{D}^{\varepsilon}_{\mathbf{1}}(\lambda; R) \lesssim_{\varepsilon} 1 \qquad \textrm{for all $1 \leq R \leq \lambda^{1-\varepsilon/n}$.}
\end{equation*}
The trivial estimate \eqref{trivial decoupling} implies the above inequality if $R$ is small (that is, $R \lesssim_{\varepsilon} 1$), and the proof proceeds by induction on $R$, using this observation as the base case. In particular, one may assume by way of induction hypothesis that the following holds.

\begin{hypothesis} There is a constant $\bar{C}_{\varepsilon} \geq 1$, depending only on the admissible parameters $n$, $\varepsilon$, $M$ and $p$, such that
\begin{equation*}
\mathfrak{D}_{\mathbf{1}}^{\varepsilon}(\lambda';R') \leq \bar{C}_{\varepsilon} 
\end{equation*}
holds for all $1 \leq R' \leq R/2$ and all $\lambda'$ satisfying $R' \leq (\lambda')^{1 - \varepsilon/n}$.
\end{hypothesis}

Let $\mathcal{B}_K$ be a finitely-overlapping cover of $B_R$ by balls of radius $K$ for some $2 \leq K \leq \lambda^{1/4}$. One may assume that any centre $\bar{z}$ of a ball in this cover satisfies $|\bar{z}| \leq \lambda^{1- \varepsilon/n}$. The estimate \eqref{T dominated by E} from Lemma~\ref{approximation lemma} with $\delta = 1/4$ implies that 
\begin{equation*}
\|T^{\lambda}f\|_{L^p(B_R)} \lesssim \big(\sum_{B(\bar{z},K) \in \mathcal{B}_K} \|E_{\bar{z}} f_{\bar{z}}\|_{L^p(w_{B(0,K)})}^p \big)^{1/p} + R^{n+1}(\lambda/R)^{-N/8}\|f\|_{L^2(\hat{\R}^n)}.
\end{equation*}
Applying the theorem of Bourgain--Demeter \cite{Bourgain2015, Bourgain2017a} (that is, Theorem~\ref{translation-invariant decoupling theorem}) with exponent $\varepsilon/2$ (and recalling Lemma~\ref{uniformity curvature extension}), one deduces that the inequality
\begin{equation*}
\|E_{\bar{z}} f_{\bar{z}}\|_{L^p(w_{B(0,K)})} \lesssim_{\varepsilon} K^{\alpha(p) + \varepsilon/2}\|E_{\bar{z}} f_{\bar{z}}\|_{L^{p, K}_{\mathrm{dec}}(w_{B(0,K)})}
\end{equation*}
holds for each of the extension operators in the previous display. Combining these observations with an application of \eqref{E dominated by T} from Lemma~\ref{approximation lemma} with $\delta'=\varepsilon/n$, and summing over $\mathcal{B}_K$,
\begin{equation*}
\|T^{\lambda}f\|_{L^p(B_R)} \lesssim_{\varepsilon} K^{\alpha(p) + \varepsilon/2} \big(\!\!\!\!\sum_{\sigma : K^{-1/2}-\mathrm{plate}} \!\!\!\!\|T^{\lambda}f_{\sigma}\|_{L^p(w_{B_R})}^p\big)^{1/p} +  R^{n+1} (\lambda/R)^{-\varepsilon N/8n} \|f\|_{L^2(\hat{\R}^n)}.
\end{equation*}
The operator on the right involves a slightly different amplitude function compared with that on the left but, as in the statement of Theorem \ref{decoupling theorem}, this is suppressed in the notation.

Note that, since $K \geq 2$, $\bar{C} \geq 1$ and $R \leq \lambda^{1 - \varepsilon/n}$, trivially $R/\bar{C}K \leq (\lambda/\bar{C}K)^{1-\varepsilon/n}$ and $R/\bar{C}K \leq R/2$. Consequently, the assumptions of the radial induction hypothesis are valid for  the parameters $R' := R/\bar{C}K$ and $\lambda' := \lambda/\bar{C}K$. Thus, by combining the radial induction hypothesis with \eqref{parabolic rescaling inequality} from the parabolic rescaling lemma, one concludes that
\begin{equation*}
\|T^{\lambda}f\|_{L^p(B_R)} \leq C_{\varepsilon} \bar{C}_{\varepsilon} K^{-\varepsilon/2} R^{\alpha(p)+\varepsilon} \|T^{\lambda}f\|_{L^{p, R}_{\mathrm{dec}}(w_{B_{R}})} + R^{2n} (\lambda/R)^{-\varepsilon N/8n} \|f\|_{L^2(\hat{\R}^n)}.
\end{equation*}
Choosing $K$ sufficiently large (depending only on $\varepsilon$, $n$, $M$ and $p$) so that $C_{\varepsilon} K^{-\varepsilon/2} \leq 1$, the induction closes and the desired result follows.




\section{Proof of the local smoothing estimate}\label{section: proof of local smoothing}

In this section the relationships between the theorems stated in the introduction are established and, in particular, it is shown that
\begin{equation*}
\textrm{Theorem~\ref{decoupling theorem}} \Rightarrow \textrm{Theorem~\ref{FIO local smoothing theorem}}  \Rightarrow \textrm{Theorem~\ref{local smoothing theorem}.} 
\end{equation*}
Given the formula for the solution $u$ from \eqref{solution formula}, the latter implication is almost immediate. The former implication follows from a straight-forward adaption of an argument due to Wolff \cite{Wolff2000}, which treats an analogous problem for the euclidean wave equation. Nevertheless, proofs of both of the implications are included for completeness.

To begin, the definition of the cinematic curvature condition, as introduced in \cite{Mockenhaupt1993}, is recalled. As in the statement of Theorem~\ref{FIO local smoothing theorem}, let $Y$ and $Z$ be precompact smooth manifolds of dimensions $n$ and $n+1$, respectively. Let $\mathcal{C} \subseteq T^*Z\setminus 0 \times T^*Y \setminus 0$ be a choice of canonical relation; here $T^*M\setminus 0$ denotes the tangent bundle of a $C^{\infty}$ manifold $M$ with the 0 section removed. Thus, 
\begin{equation*}
\mathcal{C} = \{(x,t,\xi,\tau, y, \eta) : (x,t,\xi,\tau, y, -\eta) \in \Lambda\} 
\end{equation*}
for some conic Lagrangian submanifold $\Lambda \subseteq T^*Z\setminus 0 \times T^*Y \setminus 0$; see \cite{Hormander1971} or \cite{Duistermaat2011, Sogge2017} for further details. Certain conditions are imposed on $\mathcal{C}$, defined in terms of the projections 
\begin{equation*}
\begin{tikzcd}
& \mathcal{C} \arrow[ld, swap, "\Pi_{T^*_{_{}}Y}"]  \arrow[d, "\Pi_{Z}"] \arrow[rd, "\Pi_{T^*_{z_0}Z}"] &   \\
T^*Y\setminus 0 & Z & T^*_{z_0}Z \setminus 0
\end{tikzcd}.
\end{equation*}
First there is the basic non-degeneracy hypothesis that the projections $\Pi_{T^*Y}$ and $\Pi_{Z}$ are submersions: 
\begin{equation}\label{nondegeneracy}
\mathrm{rank}\, \ud \Pi_{T^*Y} \equiv  2n \quad \textrm{and} \quad \mathrm{rank}\, \ud \Pi_{Z} \equiv n+1.
\end{equation}
This condition implies that for each $z_0 \in Z$ the image $\Gamma_{z_0} := \Pi_{T^*_{z_0}Z}(\mathcal{C})$ of $\mathcal{C}$ under the projection onto the fibre $T^*_{z_0}Z$ is a $C^{\infty}$ immersed hypersurface. Note that $\Gamma_{z_0}$ is conic and therefore has everywhere vanishing Gaussian curvature. In addition to the non-degeneracy hypothesis \eqref{nondegeneracy}, the following curvature condition is also assumed:
\begin{equation}\label{cone condition}
\parbox[c][4em][c]{0.8\textwidth}{\centering For all $z_0 \in Z$, the cone $\Gamma_{z_0}$ has $n-1$ non-vanishing principal curvatures at every point.}
\end{equation}
If both \eqref{nondegeneracy} and \eqref{cone condition} hold, then $\mathcal{C}$ is said to satisfy the \emph{cinematic curvature condition} \cite{Mockenhaupt1993}. 

\begin{remark}\label{cinematic curvature remark} Using local coordinates, \eqref{nondegeneracy} and \eqref{cone condition} may be expressed in terms of the conditions H1) and H2) introduced in $\S$\ref{subsection: variable coefficient decoupling}. Indeed, near any point 
\begin{equation*}
(x_0,t_0,\xi_0,\tau_0,y_0,\eta_0) \in \mathcal{C},
\end{equation*}
the condition \eqref{nondegeneracy} implies that there exists a phase function $\phi(z;\eta)$ satisfying H1) such that $\mathcal{C}$ is given locally by
\begin{equation*}
\{ (z,\partial_z \phi(z;\eta), \partial_\eta \phi(z;\eta), \eta ) : \eta \in \R^n\backslash \{ 0\} \:\: \text{in a conic neighbourhood of} \:\: \eta_0\}.
\end{equation*}
Furthermore, \eqref{cone condition} implies that the function $\phi$ satisfies H2). Further details may be found in \cite[Chapter 8]{Sogge2017}.
\end{remark}

Recall from the introduction that the solution to the Cauchy problem \eqref{wave equation} can be written as $u = \mathcal{F}_0f_0 + \mathcal{F}_1f_1$ where each $\mathcal{F}_j \in I^{j -1/4}(M \times \R, M;\mathcal{C})$ for some canonical relation $\mathcal{C}$ satisfying the cinematic curvature condition. Fix a choice of coordinate atlas $\{(\Omega_{\nu}, \kappa_{\nu})\}_{\nu}$ on $M$ and a partition of unity $\{\psi_{\nu}\}_{\nu}$ subordinate to the cover $\{\Omega_{\nu}\}_{\nu}$ of $M$. A choice of Bessel potential norm $\|\,\cdot\,\|_{L^p_s(M)}$ is then defined by
\begin{equation*}
\|f\|_{L^p_s(M)} := \sum_{\nu} \|f_{\nu}\|_{L^p_s(\R^n)}
\end{equation*}
where $f_{\nu} := (\psi_{\nu}f)\circ \kappa^{-1}$ and $L^p_s(\R^n)$ denotes the standard Bessel potential space in $\R^{n}$. Thus, expressing everything in local coordinates and applying the composition theorem for Fourier integral operators (see, for instance, \cite[Theorem 6.2.2]{Sogge2017}), it is clear that Theorem~\ref{local smoothing theorem} is an immediate consequence of Theorem~\ref{FIO local smoothing theorem}.

It remains to show that Theorem~\ref{FIO local smoothing theorem} follows from the decoupling inequality established in Theorem~\ref{decoupling theorem}. Working in local coordinates (and recalling Remark~\ref{cinematic curvature remark} and the discussion in $\S$\ref{subsection: variable coefficient decoupling}), it suffices to prove an estimate for operators of the form
\begin{equation}\label{local smoothing 1}
\mathcal{F}f(x,t) := \int_{\hat{\R}^n} e^{i \phi(x,t; \xi)} b(x,t;\xi)(1 + |\xi|^2)^{\mu/2} \hat{f}(\xi)\,\ud \xi
\end{equation}
where $b$ is a symbol of order 0 with compact support in the $(x,t)$ variables and $\phi$ is a smooth homogenous phase function satisfying H1) and H2) (at least on the support of $b$). Recall that $b$ is a symbol of order 0 if $b \in C^{\infty}(\R^{n+1} \times \R^n)$ and satisfies
\begin{equation*}
|\partial_z^{\nu} \partial_{\xi}^{\gamma} b(z;\xi)| \lesssim_{\gamma, \nu} (1 + |\xi|)^{-|\gamma|} \quad \textrm{for all multi-indices $(\gamma, \nu) \in \N_0^{n+1} \times \N_0^n$.}
\end{equation*}
In particular, Theorem~\ref{FIO local smoothing theorem} is a direct consequence of the following proposition. 
\begin{proposition}\label{FIO estimate}
If $\bar p_n \leq p < \infty$ and $\mathcal{F}$ is defined as in \eqref{local smoothing 1} with $\mu< -\alpha(p)= -\bar{s}_p + 1/p$, then
\begin{equation*}
\| \mathcal{F} f \|_{L^p(\R^{n+1})} \lesssim \| f \|_{L^p(\R^n)}.
\end{equation*}
\end{proposition}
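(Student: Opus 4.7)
\emph{Plan.} I would follow the strategy of Wolff~\cite{Wolff2000}, replacing the cone decoupling with Theorem~\ref{decoupling theorem}. A standard Littlewood--Paley decomposition $f = \sum_k f_k$ with $\hat{f}_k$ localised to $|\xi| \sim 2^k =: \lambda_k$ is the first step. Since $\mathcal{F}$ has a homogeneous canonical relation, each $\mathcal{F} f_k$ has spacetime Fourier support at scale $\lambda_k$, so a square function / Minkowski argument (valid since $p \geq 2$) combined with $\sup_k \|f_k\|_p \lesssim \|f\|_p$ reduces matters to the per-scale estimate
\begin{equation*}
\|\mathcal{F} f_k\|_{L^p(\R^{n+1})} \lesssim_\varepsilon \lambda_k^{\mu + \alpha(p) + \varepsilon} \|f_k\|_{L^p(\R^n)}
\end{equation*}
for any $\varepsilon > 0$; the hypothesis $\mu < -\alpha(p)$ then permits $\varepsilon$ so small that the resulting geometric series in $k$ converges.

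\emph{Per-scale estimate via decoupling.} Fixing $\lambda = \lambda_k$, the isotropic rescaling $(x,t;\xi) \mapsto (\lambda x, \lambda t; \xi/\lambda)$ converts $\mathcal{F}$ acting on frequency-$\lambda$ data into a $\lambda^{n+\mu}$-multiple of the operator $T^\lambda$ of Theorem~\ref{decoupling theorem} applied to $F(\eta) := \hat{f}_k(\lambda \eta)$, yielding the Jacobian identity $\|\mathcal{F} f_k\|_p = \lambda^{n+\mu-(n+1)/p}\|T^\lambda F\|_p$. Applying Theorem~\ref{decoupling theorem} at $R = \lambda$ and undoing the rescaling then gives
\begin{equation*}
\|\mathcal{F} f_k\|_{L^p(\R^{n+1})} \lesssim_\varepsilon \lambda^{\alpha(p)+\varepsilon} \Big(\sum_{\hat{\theta}} \|\mathcal{F} f_{k,\hat{\theta}}\|_{L^p(\R^{n+1})}^p\Big)^{1/p},
\end{equation*}
where $\hat{\theta}$ ranges over angular plates of aperture $\lambda^{-1/2}$ in the annulus $|\xi| \sim \lambda$ and $f_{k,\hat{\theta}}$ denotes the frequency restriction of $f_k$ to $\hat{\theta}$ (the $L^2$-error in Theorem~\ref{decoupling theorem} is easily absorbed). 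Combining this with the plate-wise bound $\|\mathcal{F} f_{k,\hat{\theta}}\|_{L^p(\R^{n+1})} \lesssim \lambda^\mu \|f_{k,\hat{\theta}}\|_{L^p(\R^n)}$ to be established below, the pointwise inequality $\sum_{\hat{\theta}} |f_{k,\hat{\theta}}|^p \leq (\sum_{\hat{\theta}} |f_{k,\hat{\theta}}|^2)^{p/2}$ (valid since $p \geq 2$), and the Rubio de Francia / C\'ordoba--Fefferman angular square function estimate, which is bounded on $L^p(\R^n)$ uniformly in $\lambda$ for $p \geq 2$, one deduces the per-scale estimate.

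\emph{Main obstacle.} The subtle step is the plate-wise $L^p$ bound just invoked. In the translation-invariant case it reflects the fact that the frequency-plate-localised propagator is essentially a translation in the plate's central direction plus a Schr\"odinger-type remainder with bounded Fourier support, and is hence $L^p$-bounded with constant $O(1)$. In the present variable coefficient setting one must employ a parabolic rescaling in the spirit of Lemma~\ref{parabolic rescaling lemma} to reduce the plate-restricted operator to a unit-scale Fourier integral operator for which the standard Seeger--Sogge--Stein $L^p$ bound applies. The homogeneity of $\phi$ and the cinematic curvature conditions H1)--H2) are precisely what allow the rescaled phase to lie in a class for which the unit-scale bound holds uniformly in $\lambda$ and in the choice of $\hat{\theta}$; tracking these reductions carefully, and verifying that the Rubio de Francia bound for the angular plate decomposition is truly uniform in $\lambda$, is the technically involved point.
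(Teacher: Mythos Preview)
Your overall architecture --- radial Littlewood--Paley decomposition, then the variable-coefficient decoupling Theorem~\ref{decoupling theorem} at each dyadic scale $\lambda$, then a ``recoupling'' step to control the $\ell^p$-sum over plates --- is exactly the route the paper takes. The difference is entirely in how the recoupling step is executed.

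You propose to prove a \emph{single-plate} bound $\|\mathcal{F}^\lambda_\theta f\|_{L^p} \lesssim \lambda^\mu \|f_\theta\|_{L^p}$ by parabolic rescaling to a unit-scale FIO, and then to reassemble the pieces via an angular square-function estimate $\|(\sum_\theta |f_{k,\theta}|^2)^{1/2}\|_{L^p}\lesssim\|f_k\|_{L^p}$. The paper instead proves the $\ell^p$-sum bound
\[
\Big(\sum_\theta \|\mathcal{F}^\lambda_\theta f\|_{L^p}^p\Big)^{1/p} \lesssim \lambda^\mu \|f\|_{L^p}
\]
\emph{directly}, by interpolating between $p=2$ (H\"ormander's $L^2$ theorem for each fixed $t$, together with Plancherel orthogonality of the plates) and $p=\infty$ (a uniform $L^1$ bound on the plate-localised kernels $K^\lambda_\theta$, obtained by stationary phase exploiting the homogeneity and angular localisation). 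This is Lemma~\ref{recoupling lemma}, which the paper attributes to \cite{Seeger1991} and \cite[Chapter IX]{Stein1993}. The payoff is that neither parabolic rescaling nor any input-side square-function theorem is needed; the kernel estimate at $p=\infty$ is the classical one. Your route also closes --- the angular square function for a smooth partition of an annulus into $\lambda^{-1/2}$-sectors is indeed bounded on $L^p$ for $p\ge 2$ uniformly in $\lambda$, and in fact the cleanest proof of that is the very same $p=2$/$p=\infty$ interpolation --- but labelling it ``Rubio de Francia / C\'ordoba--Fefferman'' is imprecise, and the parabolic-rescaling argument you sketch for the single-plate bound is substantially heavier machinery than the direct stationary-phase kernel computation. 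A minor point you omit: the paper first reduces to product-type symbols $b(x,t;\xi)=b_1(x,t)b_2(\xi)$ by a Fourier expansion in $z$, which is needed before the rescaling that converts $\mathcal{F}^\lambda$ into the $T^\lambda$ of Theorem~\ref{decoupling theorem}.
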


\begin{proof} By applying a rotation and a suitable partition of unity, one may assume that $b$ is supported in $B^n(0,\epsilon_0) \times B^1(1,\epsilon_0) \times \Gamma$ for a suitably small constant $0 < \epsilon_0 \leq 1$ where
\begin{equation*}
\Gamma := \{ \xi \in \hat{\R}^n : |\xi_j| \leq |\xi_n| \textrm{ for }  1 \leq j \leq n-1 \}.
\end{equation*}
Further, as the symbol $b$ has compact $(x,t)$-support of diameter $O(1)$, one may assume without loss of generality that it is of product-type: that is, $b(x,t;\xi) = b_1(x,t)b_2(\xi)$. The latter reduction follows by taking Fourier transforms in a similar manner to that used in the proof of Lemma~\ref{approximation lemma}; the argument, which is standard, is postponed until the end of the proof.  

Fix $\beta \in C^{\infty}_c(\R)$ with $\mathrm{supp}\,\beta \in [1/2,2]$ and such that $\sum_{\lambda \,\mathrm{dyadic}} \beta(r/\lambda) = 1$ for $r \neq 0$. Let $\mathcal{F}^{\lambda} := \mathcal{F} \circ \beta(\sqrt{-\Delta_x}/\lambda)$, so that $\mathcal{F}^{\lambda}f$ is given by introducing a $\beta(|\xi|/\lambda)$ factor into the symbol in \eqref{local smoothing 1},\footnote{In general, $m(i^{-1}\partial_x)$ denotes the Fourier multiplier operator (defined for $f$ belonging to a suitable \emph{a priori} class)
\begin{equation*}
m(i^{-1}\partial_x)f(x) := \int_{\hat{\R}^n} e^{i \langle x, \xi \rangle} m(\xi)  \hat{f}(\xi)\,\ud \xi
\end{equation*}
for any $m \in L^{\infty}(\hat{\R}^n)$. The operator $m(\sqrt{-\Delta_x})$ is then defined in the natural manner via the identity $-\Delta_x = i^{-1}\partial_x \cdot i^{-1}\partial_x$.} and decompose $\mathcal{F}f$ as
\begin{equation*}
\mathcal{F}f =: \mathcal{F}\,^{\lesssim 1}f + \sum_{\lambda \in \N :\, \mathrm{dyadic}} \mathcal{F}^{\lambda}f.
\end{equation*}
It follows that $\mathcal{F}\,^{\lesssim 1}$ is a pseudodifferential operator of order 0 and therefore bounded on $L^p$ for all $1 < p < \infty$. Thus, letting $\varepsilon := -(\mu + \alpha(p))/2 > 0$, the problem is further reduced to showing that
\begin{equation*}
\|\mathcal{F}^{\lambda}f\|_{L^p(\R^{n+1})} \lesssim \lambda^{\alpha(p) + \mu + \varepsilon}\|f\|_{L^p(\R^n)}
\end{equation*}
for all $\lambda \geq 1$.

By various rescaling arguments and Theorem~\ref{decoupling theorem}, it follows that 
\begin{equation*}
\|\mathcal{F}^{\lambda}f\|_{L^p(\R^{n+1})} \lesssim_{s,p} \lambda^{\alpha(p) + \varepsilon} \Big(\sum_{\theta : \lambda^{-1/2}-\mathrm{plate}} \|\mathcal{F}^{\lambda}_{\theta}f\|_{L^p(\R^{n+1})}^p \Big)^{1/p}.
\end{equation*}
where $\mathcal{F}^{\lambda}_{\theta} := \mathcal{F}^{\lambda} \circ a_{\theta}(i^{-1}\partial_x)$ for $a_{\theta}$ a choice of smooth angular cut-off to $\theta$. Thus, to conclude the proof of Proposition~\ref{FIO estimate} (and therefore that of Theorems~\ref{FIO local smoothing theorem} and~\ref{local smoothing theorem}), it suffices to establish the following lemma.

\begin{lemma}\label{recoupling lemma} For $\mathcal{F}^{\lambda}_{\theta}$ as defined above and $2 \leq p \leq \infty$ one has
\begin{equation*}
\Big(\sum_{\theta : \lambda^{-1/2}-\mathrm{plate}} \|\mathcal{F}^{\lambda}_{\theta}f\|_{L^p(\R^{n+1})}^p \Big)^{1/p} \lesssim \lambda^{\mu} \|f\|_{L^p(\R^n)}.
\end{equation*}
\end{lemma}

This inequality essentially appears in \cite{Seeger1991} (see also \cite[Chapter IX]{Stein1993}); a sketch of the proof is included for completeness.

\begin{proof}[Proof (of Lemma~\ref{recoupling lemma})] By interpolation (via H\"older's inequality) it suffices to establish the cases $p = 2$ and $p=\infty$. 

To prove the $p=2$ bound, one may use H\"ormander's theorem (see, for instance, \cite[Chapter IX $\S$1.1]{Stein1993}) for fixed $t$, followed by Plancherel's theorem and the almost orthogonality of the plates $\theta$.

To prove the $p = \infty$ bound, it suffices to show that 
\begin{equation*}
\sup_{(x,t)\in \R^{n+1}}\|K^{\lambda}_{\theta}(x,t;\,\cdot\,)\|_{L^1(\R^n)} \lesssim \lambda^{\mu}
\end{equation*}
where $K^{\lambda}_{\theta}$ is the kernel of the operator $\mathcal{F}^{\lambda}_{\theta}$. This follows from a standard stationary phase argument, which exploits heavily the homogeneity of the phase and the angular localisation; see, for instance, \cite[Chapter IX $\S\S$4.5-4.6]{Stein1993} for further details. 
\end{proof}

It remains to justify the initial reduction to symbols of product-type. As mentioned earlier, the argument is standard and appears, for instance, in the proof of the $L^2$ boundedness for pseudodifferential operators of order 0 (see \cite[Chapter VI, $\S$2]{Stein1993}). 

As $b$ is a symbol of order 0 with compact $(x,t)$-support, $(n+2)$-fold integration-by-parts shows that 
\begin{equation}\label{decay FT symbol}
|\partial_{\xi}^{\gamma}\hat{b}(\zeta;\xi)| \lesssim_{\gamma} (1 + |\zeta|)^{-(n+2)}(1+|\xi|)^{-|\gamma|} \qquad \textrm{for all multi-indices $\gamma \in \N_0^n$,}
\end{equation}
where $\hat{b}$ denotes the Fourier transform of $b$ in the $z = (x,t)$ variable. By means of the Fourier transform one may write 
\begin{equation*}
\mathcal{F} f (x,t) = \int_{\hat{\R}^{n+1}} e^{i  \langle z, \zeta \rangle } (1+ |\zeta|)^{-(n+2)}  \int_{\hat{\R}^n} e^{i \phi(x,t;\xi)} \frac{b_{\zeta}(x,t;\xi)}{(1+|\xi|^2)^{-\mu/2}}\widehat{f}(\xi) \ud\xi  \ud\zeta,
\end{equation*}
where $b_{\zeta}(x,t;\xi) := \psi(x,t)\hat{b}(\zeta; \xi) (1+ |\zeta|)^{n+2}$ for $\psi$ a smooth cut-off equal to 1 in the $z$-support of $b$ and vanishing outside its double. The functions $b_{\zeta}$ are all of product-type and, by \eqref{decay FT symbol}, are symbols of order 0 uniformly in $\zeta$. Taking $L^p$-norms and applying Minkowski's integral inequality, it now suffices to show the $L^p$ boundedness of $\mathcal{F}$ under the product hypothesis.
\end{proof}




\section{Counterexamples for local smoothing estimates for certain Fourier integral operators}\label{section: sharp examples}

To conclude the paper the proof of Proposition~\ref{sharpness proposition} is presented. As originally observed by the third author in \cite{Sogge1991} and elaborated further in, for instance, \cite{Mattila2015, Mockenhaupt1993, Sogge2017, Tao1999}, it is known that local smoothing estimates for Fourier integral operators imply favourable $L^p$ estimates for a natural class of oscillatory integral operators. Indeed, this is the basis of the well-known formal implication that the local smoothing conjecture for the (euclidean) wave equation implies the Bochner--Riesz conjecture (see \cite{Sogge1991} or \cite{Sogge2017}). In this section a general form of this implication is combined with a counterexample of Bourgain \cite{Bourgain1991, Bourgain1995} to show that Theorem~\ref{FIO local smoothing theorem} is sharp when $n \geq 3$ is odd.




\subsection{Local smoothing for Fourier integrals and non-homogeneous oscillatory integrals}

Let $\Omega \subseteq \R^n$ be open and suppose that $\Phi \colon \Omega \times \Omega \to \R$ is smooth and satisfies  
\begin{equation}\label{non-singular Phi}
\partial_{y}\Phi(x,y) \neq 0 \qquad \textrm{for all $(x,y) \in \Omega \times \Omega$}
\end{equation}
and, moreover, that the Monge--Ampere matrix associated to $\Phi$ is everywhere non-singular:
\begin{equation}\label{MA}
\det \left(\begin{array}{cc}
0                         &    \partial_y\Phi(x,y)            \\[4pt]
\partial_x\Phi(x,y) &     \partial^2_{xy}\Phi(x,y)
\end{array}\right)\neq 0 
\qquad \textrm{for all $(x,y) \in \Omega \times \Omega$.}
\end{equation}
By \eqref{non-singular Phi}, for each $(x,t) \in \Omega \times (-1,1)$ the level set
\begin{equation}\label{rotcurv}
S_{x,t} := \big\{y\in \Omega:\Phi(x,y)=t \big\}
\end{equation}
is a smooth hypersurface. The condition \eqref{MA} implies that the smooth family of surfaces in \eqref{rotcurv} satisfies the \emph{rotational curvature condition} of Phong and Stein \cite{Phong1986} (see also \cite[Chapter XI]{Stein1993}). 

The above phase function can be used to construct two natural oscillatory integral operators. To describe these objects, first fix a pair of smooth cut-off functions $a\in C^\infty_c(\Omega\times \Omega)$ and $\rho\in C^\infty_c((-1,1))$. 

\begin{enumerate}[i)]
\item For each fixed $t \in \R$ the distribution
\begin{equation}\label{FIO kernel}
K(x,t;y) := \rho(t)\,  a(x,y) \,  \delta_0(t-\Phi(x,y))
\end{equation}
is the kernel of a conormal Fourier integral operator on $\R^n \times \R^n$ of order $-(n-1)/2$. In particular, $K$ can be written as
\begin{equation*}
K(x,t;y)  = \int_{\hat{\R}} e^{i \tau(t - \Phi(x,y))}\rho(t)  a(x,y) \,  \ud \tau,
\end{equation*}
where the right-hand side expression is understood to converge in the sense of oscillatory integrals. From this formula, one can easily deduce (using, for instance, \cite[Theorem 0.5.1]{Sogge2017}) that the canonical relation is given by
\begin{equation}\label{canonical averaging}
\mathcal{C}=\{(x,t, - \tau \partial_x \Phi(x,y), \tau, y, \tau\partial_y \Phi(x,y) ) : \:  \Phi(x,y)=t  )\}.
\end{equation}
It is remarked that the condition \eqref{MA} ensures that each of these Fourier integrals is non-degenerate in the sense that the canonical relation is a canonical graph. 

It will be useful to consider the operator
 \begin{equation}\label{F}
\mathcal{F}f(x,t) := \int_{\R^n} K(x,t;y) f(y) \ud y,
 \end{equation}
which is understood to map functions on $\R^n$ to functions on $\R^{n+1}$ by taking averages over the variable hypersurfaces $S_{x,t}$.
\item One may also consider the non-homogeneous oscillatory integral operator
 \begin{equation}\label{NH} 
 S_{\Phi}^{\lambda} f(x) :=\int_{\R^n} e^{i\lambda \Phi(x,y)} a(x,y)  f(y) \ud y,
 \end{equation}
 where the amplitude $a\in C^\infty_c(\Omega \times \Omega)$ is as in \eqref{FIO kernel} and $\lambda \geq 1$.
\end{enumerate}

Assume, in addition to the condition \eqref{MA}, that
 \begin{equation}\label{del}
 \rho(t)\delta_0(t- \Phi(x,y))=\delta_0(t-\Phi(x,y)) \quad \textrm{for all $(x,y) \in \mathrm{supp}\, a$ and $t \in \R$.}
 \end{equation}
Note that this holds if, for instance, $\Phi(0,0)=0$ and $\rho(t)=1$ for all $t$ in a neighbourhood of 0 provided that $a$ vanishes outside of a small neighbourhood of the origin in $\R^n\times\R^n$. Under these conditions $L^p$ bounds for the operator \eqref{NH} can be related to Sobolev estimates for \eqref{F}.

\begin{proposition}\label{Fourier vrs oscillatory proposition} Under the conditions \eqref{MA} and \eqref{del}, if $\gamma>0$ is fixed and $\lambda \geq 1$, then
\begin{equation}\label{key}
\| S_{\Phi}^{\lambda} \|_{L^p(\R^n)\to L^p(\R^n)} \lesssim \lambda^{-\gamma} \bigl\| \, (I-\Delta_x)^{\gamma/2} \circ {\mathcal F} \, \bigr\|_{L^p(\R^n)\to L^p(\R^{n+1})}.
\end{equation}
\end{proposition}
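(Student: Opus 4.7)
The plan is to extract $S_{\Phi}^{\lambda} f$ as a single $t$-Fourier mode of $\mathcal{F}f$ and then exploit that $S_{\Phi}^{\lambda} f$ is essentially $x$-frequency-localised at scale $\lambda$. Pick $\beta \in C_c^\infty(\R)$ equal to $1$ on $\mathrm{supp}\,\rho$. From the distributional kernel \eqref{FIO kernel} together with hypothesis \eqref{del} (which forces $\rho(\Phi(x,y)) = 1$, hence $\beta(\Phi(x,y)) = 1$, on $\mathrm{supp}\,a$), a Fubini calculation collapses the $\delta$-function and yields the extraction identity
\begin{equation*}
S_{\Phi}^{\lambda} f(x) \;=\; \int_{\R} e^{i\lambda t}\,\beta(t)\,\mathcal{F}f(x,t)\,dt.
\end{equation*}
Commuting $(I-\Delta_x)^{\gamma/2}$ past the $t$-integral and applying Minkowski and H\"older ($\beta$ has compact support) gives
\begin{equation*}
\|(I-\Delta_x)^{\gamma/2} S_{\Phi}^{\lambda} f\|_{L^p(\R^n)} \;\lesssim\; \|(I-\Delta_x)^{\gamma/2}\mathcal{F}f\|_{L^p(\R^{n+1})}.
\end{equation*}

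Next I introduce a frequency localiser at scale $\lambda$. The Monge--Amp\`ere hypothesis \eqref{MA}, expanded by cofactors along the first column, forces $\partial_x\Phi$ to be non-vanishing on $\mathrm{supp}\,a$; hence $K := \{\partial_x\Phi(x,y) : (x,y) \in \mathrm{supp}\,a\}$ is a compact subset of $\R^n\setminus\{0\}$. Pick $\chi \in C_c^\infty(\R^n\setminus\{0\})$ equal to $1$ on a neighbourhood of $K$ and set $P_\lambda := \chi(\lambda^{-1} D_x)$. The symbol $\lambda^\gamma \chi(\xi/\lambda)(1+|\xi|^2)^{-\gamma/2}$ satisfies Mikhlin bounds uniformly in $\lambda \geq 1$, so $P_\lambda (I-\Delta_x)^{-\gamma/2}$ is bounded on $L^p(\R^n)$ with norm $O(\lambda^{-\gamma})$. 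Factoring $P_\lambda = [P_\lambda(I-\Delta_x)^{-\gamma/2}]\circ (I-\Delta_x)^{\gamma/2}$ and combining with the preceding display yields
\begin{equation*}
\|P_\lambda S_{\Phi}^{\lambda} f\|_{L^p(\R^n)} \;\lesssim\; \lambda^{-\gamma}\,\|(I-\Delta_x)^{\gamma/2}\mathcal{F}\|_{L^p\to L^p}\,\|f\|_{L^p(\R^n)}.
\end{equation*}

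The main technical obstacle is the complementary bound $\|(I-P_\lambda) S_{\Phi}^{\lambda}\|_{L^p\to L^p} \lesssim_N \lambda^{-N}$ for every $N\in\N$. This is non-stationary phase: the kernel of $(I-P_\lambda) S_{\Phi}^{\lambda}$ is, up to constants, $\iint (1-\chi(\xi/\lambda))\,e^{i\xi\cdot(x-z) + i\lambda \Phi(z,y)}\,a(z,y)\,d\xi\,dz$, and the $z$-gradient of the total phase equals $\lambda\partial_z\Phi(z,y) - \xi$, which on $\mathrm{supp}\,a \cap \mathrm{supp}(1-\chi(\cdot/\lambda))$ is bounded below in magnitude by $c(\lambda + |\xi|)$, since $\xi/\lambda$ lies outside a neighbourhood of $K$. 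Repeated integration by parts in $z$, followed by integration in $\xi$ and a Schur-test argument (using that the kernel is compactly supported in $(x,y)$), produces the required decay. Combining the two estimates gives $\|S_{\Phi}^{\lambda}\|_{L^p\to L^p} \leq C_1\lambda^{-\gamma}\|(I-\Delta_x)^{\gamma/2}\mathcal{F}\|_{L^p\to L^p} + C_N\lambda^{-N}$; choosing $N > \gamma$ and permitting the implicit constant to depend on the fixed data $(\Phi,a,\rho,\gamma)$ absorbs the error (trivially if the main term is zero or infinite, and otherwise via $\lambda^{-N} \leq \lambda^{-\gamma}$ for $\lambda\geq 1$) and yields \eqref{key}.
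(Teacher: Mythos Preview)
Your proof is correct and follows essentially the same approach as the paper: extract $S_\Phi^\lambda$ as a $t$-Fourier mode of $\mathcal{F}$, then use that $S_\Phi^\lambda f$ is $x$-frequency localised at scale $\sim\lambda$ (non-stationary phase for the complementary piece) together with the $\lambda^{-\gamma}$ gain from $(I-\Delta_x)^{-\gamma/2}$ on high frequencies. One minor inaccuracy: the kernel of $(I-P_\lambda)S_\Phi^\lambda$ is not compactly supported in $x$ (only in $y$), but it is rapidly decreasing outside a compact set, which suffices for the Schur test; the paper uses the slightly simpler radial cutoff $\beta(\sqrt{-\Delta_x}/c_0\lambda)$ in place of your $\chi(D_x/\lambda)$ and glosses over the same issue.
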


\begin{proof} Let $\beta\in C_c^{\infty}(\R)$ satisfy $\beta(r) = 1$ for $|r| \leq 1$ and $\beta(r)=0$ for $|r| \geq 2$. The condition \eqref{MA} implies that $\partial_x \Phi(x,y)\ne 0$ for all $(x,y) \in \mathrm{supp}\,a$ and a simple integration-by-parts argument therefore shows that for some small constant $c_0>0$ the estimate
\begin{equation*}
\bigl\| \, \beta\bigl(\sqrt{-\Delta_x}/c_0\lambda\bigr)  \circ S_{\Phi}^{\lambda} \, \bigr\|_{L^p(\R^n)\to L^p(\R^n)}=O_N(\lambda^{-N})
\end{equation*}
holds for all $N \in \N$. Furthermore, since $\gamma>0$, it also follows that
\begin{equation*}
\bigl\|(1-\beta(\sqrt{-\Delta_x}/c_0\lambda)) \circ (I-\Delta_x)^{-\gamma/2} \bigr\|_{L^p(\R^n)\to L^p(\R^n)}=O(\lambda^{-\gamma}).
\end{equation*}
Combining these observations, 
\begin{equation}\label{Fourier vrs oscillatory proposition 1} 
\|S_{\Phi}^{\lambda}\|_{L^p(\R^n)\to L^p(\R^n)}\lesssim \lambda^{-\gamma} \bigl\|\, (I-\Delta_x)^{\gamma/2}\circ S_{\Phi}^{\lambda}\, \bigr\|_{L^p(\R^n)\to L^p(\R^n)} + O_N(\lambda^{-N}).
\end{equation}
On the other hand, the definition of $K$ and the condition \eqref{del} imply that
\begin{equation*}
\int e^{i\lambda t}\, K(x,t;y) \,\ud t = e^{i\lambda \Phi(x,y)}a(x,y).
\end{equation*}
One may therefore write the operator $S_{\Phi}^{\lambda}$ in terms of $K$ and apply H\"older's inequality together with the estimate \eqref{Fourier vrs oscillatory proposition 1} to deduce the desired result. 
\end{proof}




\subsection{Sharpness of the range of exponents $p\geq \bar{p}_n $ for optimal local smoothing bounds for odd $n$}

To show that the bounds obtained in Theorem~\ref{FIO local smoothing theorem} are sharp in odd dimensions, in this section certain phase functions $\Phi$ are investigated which, in addition to \eqref{MA}, satisfy a variant of the Carleson--Sj\"olin condition from \cite{Carleson1972}.

Note that \eqref{MA} ensures that at each point the rank of the mixed Hessian of $\Phi$ is at least $n-1$.  Assume that
\begin{equation}\label{cs1}
\mathrm{rank}\, \partial_{xy}^2 \Phi(x,y) = n-1 \quad \textrm{for all $(x,y) \in \mathrm{supp} \, a$.}
\end{equation}
It then follows that, provided $\Omega$ is sufficiently small, for any fixed $x_0$ in the $x$-support of $a$ the map
\begin{equation*}
y\to \partial_x \Phi(x_0,y), \qquad y\in \Omega
\end{equation*}
parametrises a hypersurface $ \Sigma_{x_0} \subset \R^n$. Suppose, in addition to \eqref{cs1}, the phase also satisfies the following curvature condition:
\begin{equation}\label{curv}
\parbox[c][4em][c]{0.8\textwidth}{\centering For each $x_0\in \Omega$ the surface $\Sigma_{x_0}$ has $n-1$ non-vanishing principal curvatures at every point.}
\end{equation}
In this case, the phase function $\Phi$ is said to satisfy the \emph{$n\times n$ Carleson--Sj\"olin condition} (see \cite{Sogge2017}). This definition should be compared with the similar conditions
H1) and H2) for the homogeneous oscillatory integrals described in $\S$\ref{subsection: variable coefficient decoupling} (note, for instance, that \eqref{curv} is equivalent to the condition that, for a suitably defined Gauss map $G_{\Phi}$, the $y$-Hessian of $\langle \partial_x\Phi(x_0,y),G_{\Phi}(x_0,y_0)\rangle$ has rank $n-1$ at $y = y_0$ for every $(x_0,y_0) \in \Omega$). 

If \eqref{cs1} and \eqref{curv} are valid, then it is claimed that the Fourier integral operators ${\mathcal F}$ in \eqref{F} satisfy the cinematic curvature condition appearing in the hypotheses of Theorem~\ref{FIO local smoothing theorem}.  If ${\mathcal C}\subset T^*\R^n \,\setminus\, 0 \times  T^*\R^{n+1} \, \setminus \, 0$ is the canonical relation for ${\mathcal F}$, then recall that the non-degeneracy condition \eqref{nondegeneracy} is that $\textrm{rank}\,\ud\Pi_{T^*\R^n}\equiv 2n$ and $\textrm{rank}\,\ud\Pi_{\R^{n+1}}\equiv n+1$. This holds as an immediate consequence of \eqref{MA} since, as was observed earlier, \eqref{MA} implies that ${\mathcal C}$ is a local canonical graph. It remains to verify the cone condition \eqref{cone condition}. It immediately follows from the expression \eqref{canonical averaging} that for the Fourier integral operators in \eqref{F} the cones $\Gamma_{x_0,t_0}$ are given by
\begin{equation*}
\Gamma_{x_0,t_0}=\bigl\{ \tau(-\partial_x\Phi(x_0,y), 1): y\in \Omega,  \tau \in \R \bigr\}.
\end{equation*}
Consequently, the cone condition holds if \eqref{cs1} and \eqref{curv} are satisfied. This verifies the claim.

Recall from the discussion following the statement of Proposition~\ref{sharpness proposition} that for each fixed $t$ the composition
\begin{equation*}
(I-\Delta_x)^{\gamma/2} \circ \bigl({\mathcal F}h\bigr)(\, \cdot \, ,t)
\end{equation*}
is a Fourier integral operator of order $-(n-1)/2+\gamma$. Thus, a special case of the local smoothing problem is to show that for a given exponent $2n/(n-1) \leq p < \infty$ one has
\begin{equation}\label{ls}
\bigl\| \, (I-\Delta_x)^{\gamma/2} \circ {\mathcal F}\, \bigr\|_{L^p(\R^n)\to L^p(\R^{n+1})}=O(1) \quad \textrm{for all $0<\gamma<n/p$}.
\end{equation}
Note that, unlike the operators in \eqref{F}, the Fourier integrals in \eqref{ls} do not have kernels with compact $x$-support; however, they are bounded and rapidly decreasing outside of any neighbourhood of the $x$-support of $a$.

Adapting a counterexample of Bourgain \cite{Bourgain1991, Bourgain1995}, one may construct a phase $\Phi$ so that \eqref{ls} cannot hold for $p < \bar{p}_n$ if $n\ge 3$ is odd. This establishes Proposition~\ref{sharpness proposition} and thereby shows that Theorem~\ref{FIO local smoothing theorem} is optimal in the odd-dimensional case. The details are given presently. It is remarked that, strictly speaking, here a slight simplification of Bourgain's construction is used, which is due to Stein \cite[Chapter IX, $\S$6.5]{Stein1993} (see also \cite[pp. 67-69]{Sogge2017} for further details). 

\begin{proof}[Proof (of Proposition~\ref{sharpness proposition})] Consider the matrix-valued function $A \colon \R \to \mathrm{Mat}(2, \R)$ defined by
\begin{equation*}
A(s):=
\begin{pmatrix}
1 &s
\\
s &s^2
\end{pmatrix} \qquad \textrm{for all $s \in \R$.}
\end{equation*}
Let $n\geq 3$ be odd and $\bm{A} \colon \R \to \mathrm{Mat}(n-1, \R)$ be given by
\begin{equation*}
\bm{A}(s) := \underbrace{A(s) \oplus \dots \oplus A(s)}_{\textrm{$ \frac{n-1}{2} $-fold}}
\end{equation*}
so that $\bm{A}(s)$ is an $(n-1)\times(n-1)$ block-diagonal matrix. Using these matrices, define a phase function $\phi$ on $\R^n\times \R^{n-1}$ by
\begin{equation}\label{odd}
\phi(x,y'): = \langle x', y' \rangle +\frac{1}{2} \langle \bm{A}(x_n)y', y'\rangle
\end{equation}
for all $(x,y') = (x',x_n,y') \in \R^n \times \R^{n-1}$. Given an amplitude function $b \in C^{\infty}_c(\R^n \times \R^{n-1})$ define the oscillatory integral operator
\begin{equation*}
S_{\phi}^{\lambda} f(x) := \int_{\R^{n-1}} e^{i\lambda \phi(x,y')} \, b(x,y') f(y') \, \ud y'.
\end{equation*}
A stationary phase argument (see, for instance, \cite[pp. 68-69]{Sogge2017}) then yields
\begin{equation}\label{bad p}
\lambda^{-(n-1)/4 -(n-1)/2p} \lesssim \|S_{\phi}^{\lambda}\|_{L^p(\R^{n-1})\to L^p(\R^n)}, \quad
\textrm{if $\lambda \geq 1$ and $p\ge 2$},
\end{equation}
provided that $b(0,0)\ne 0$.

If $\phi$ is as in \eqref{odd} and
\begin{equation}\label{bad Phi}
\Phi(x,y) := \phi(x,y')+x_n+y_n,
\end{equation}
then clearly \eqref{MA} is valid when $x=y=0$.  Since
\begin{equation*}
y \to \partial_x \Phi(0,y)=(y', \sum_{j=0}^{(n-3)/2} y_{2j+1}y_{2j+2})+e_n
\end{equation*}
parametrises a hyperbolic paraboloid with $(n-1)/2$ positive principal curvatures and $(n-1)/2$ negative principal curvatures, one concludes that for small $x$ the Carleson--Sj\"olin conditions \eqref{cs1} and \eqref{curv} must hold, provided the support of $b$ lies in a suitably small ball about the origin. 

Suppose $\beta$ is as in the proof of Proposition~\ref{Fourier vrs oscillatory proposition}, so that $\beta \in C^{\infty}_c(\R)$ satisfies $\beta(r) = 1$ whenever $|r| \leq 1$ and $\beta(r) = 0$ whenever $|r| \geq 2$. Assume $b \in C^{\infty}_c(\R^n \times \R^{n-1})$ satisfies  $b(0,0)\ne 0$ and is supported in a small neighbourhood of the origin. Take $a$ in \eqref{NH} to be equal to
\begin{equation*}
a(x,y)=b(x,y')\beta(y_n/ c_0)
\end{equation*}
for some suitable choice of small constant $0 < c_0 < 1/2$. Provided the size of the support of $b$ and $c_0$ are suitably chosen, \eqref{del} holds. Furthermore, taking $F(y) := \beta(y_n)e^{-i\lambda y_n}f(y')$  in \eqref{NH}, one readily observes that
\begin{equation*}
|S_{\phi}^{\lambda} f(x)| \sim |S_{\Phi}^{\lambda} F(x)| \, \, \text{and}
\quad  \, \, \|F\|_{L^p(\R^n)} \sim \|f\|_{L^p(\R^{n-1})} 
\end{equation*}
and, consequently, 
\begin{equation*}
\|S_{\phi}^{\lambda} \|_{L^p(\R^{n-1}) \to L^p(\R^n)} \lesssim \|S_{\Phi}^{\lambda}\|_{L^p(\R^n)\to L^p(\R^n)}.
\end{equation*}
Combining this with \eqref{bad p} and \eqref{key}, for $\gamma>0$ and $\lambda \geq 1$ one concludes that
\begin{equation*}
\lambda^{\gamma -(n-1)/4-(n-1)/2p} \lesssim \bigl\| \, (I-\Delta_x)^{\gamma/2} \circ {\mathcal F} \, \bigr\|_{L^p(\R^n)\to L^p(\R^{n+1})}
\end{equation*}
where ${\mathcal F}$ is as in \eqref{F}.  Since
\begin{equation*}
\frac{n}{p} -\frac{n-1}{4}-\frac{n-1}{2p}>0 \quad \mathrm{if} \quad p< \bar{p}_n,
\end{equation*}
it follows that \eqref{ls} cannot hold for any Lebesgue exponent satisfying $p<\bar{p}_n$.
\end{proof}

For even dimensions $n\ge 4$ one may modify the argument given in the proof of Proposition~\ref{sharpness proposition} to give a necessary condition for the local smoothing problem for the general class of Fourier integral operators under consideration. Indeed, in the even dimensional case one simply defines 
 \begin{equation*}
\bm{A}(s) := \underbrace{A(s)\oplus \dots \oplus A(s)}_{\textrm{$\frac{n-2}{2}$-fold}} \oplus (1+s),
\end{equation*}
where $(1+s)$ is a $1 \times 1$ matrix with entry $1+s$, so that once again $\bm{A}(s)$ is an $(n-1)\times(n-1)$ block-diagonal matrix. Taking the phase function $\phi$ as in \eqref{odd}, it follows that the resulting oscillatory integral operators satisfy
\begin{equation*}
\lambda^{-n/4-(n-2)/2p}\lesssim \|S_{\phi}^{\lambda}\|_{L^\infty(\R^{n-1})\to L^p(\R^n)}.
\end{equation*}
See, for instance, \cite[p. 69]{Sogge2017} for further details. Arguing \emph{mutatis mutandis}, for even $n\ge 4$ and $\mathcal{F}$ defined as in the proof of Proposition~\ref{sharpness proposition} (with respect to the new choice of phase $\phi$) the estimate \eqref{ls} fails for $p<2(n+2)/n$.




\subsection{Some open problems} The cones $\Gamma_{x_0,t_0}\subset T^*_{x_0,t_0}\R^{n+1}$ associated to the phase in \eqref{bad Phi} have principal curvatures of opposite sign (in fact, in the examples considered above the difference between the number of positive and the number of negative principal curvatures is minimal).  It would be interesting to see if any improvement is possible in the range of $p$ for which there is optimal local smoothing if the $\Gamma_{x_0,t_0}$ always have $n-1$ positive principal curvatures. The model case for this is the class of Fourier integrals arising in the context of Theorem~\ref{local smoothing theorem}: that is, from solutions of wave equations given by a Laplace--Beltrami operator on some Riemannian manifold $(M,g)$. In this case $\Phi(x,y)$ is given by the associated Riemannian distance function $d_g(x,y)$ minus a constant.  By Proposition~\ref{Fourier vrs oscillatory proposition} and the counterexamples of Minicozzi and the third author \cite{Minicozzi1997} (see also \cite{Sogge}), there exist metrics for which optimal local smoothing is not possible when $p < \bar{p}_{n, +}$ where
\begin{equation*}
\bar{p}_{n, +} := \left\{
\begin{array}{ll}
\tfrac{2(3n+1)}{3n-3} & \textrm{if $n$ is odd}, \\[4pt]
\tfrac{2(3n+2)}{3n-2} & \textrm{if $n$ is even.}
\end{array} \right. 
\end{equation*}

On the other hand, if $\Phi(x,y) := d_g(x,y)$, then recent results of Guth, Iliopoulou and the second author \cite{Guth} yield the optimal bounds for $p \geq \bar{p}_{n, +}$ for the oscillatory operators in \eqref{NH}; this suggests that one should be able to obtain optimal local smoothing bounds for $p \geq \bar{p}_{n, +}$ under the above convexity assumptions. In Figure~\ref{exponents table} the conjectured numerology for sharp local smoothing estimates for Fourier integral operators is tabulated, according to the parity of the dimension and various curvature assumptions. As mentioned in the introduction, for the euclidean wave equation sharp local smoothing estimates are conjectured to hold for the wider range $2n/(n-1)\leq p<\infty$.

\begin{figure}
\begin{center}
\begin{TAB}(c,1cm,2cm)[5pt]{|c|c|c|}{|c|c|c|}
 & $n$ odd & $n$ even    \\
\begin{tabular}[x]{@{}c@{}}$n-1$ non-vanishing \\curvatures\end{tabular} & $\displaystyle \frac{2(n+1)}{n-1}$ & $\displaystyle \frac{2(n+2)}{n}$  \\
\begin{tabular}[x]{@{}c@{}}$n-1$ positive \\curvatures\end{tabular}  & $\displaystyle \frac{2(3n+1)}{3n-3}$ & $\displaystyle \frac{2(3n+2)}{3n-2}$ \\
\end{TAB}
\caption{Conjectured endpoint values for the exponent $p$ for the sharp local smoothing estimates in Theorem~\ref{FIO local smoothing theorem} under various hypothesis on $\mathcal{F} \in I^{\mu-1/4}$. Theorem~\ref{FIO local smoothing theorem} establishes the odd dimensional case under the hypothesis of $n-1$ non-vanishing principal curvatures.}
\label{exponents table}
\end{center}
\end{figure}

Finally, it is remarked that the conjectured numerology in Figure~\ref{exponents table} coincides with the sharp bounds to a problem posed by H\"ormander \cite{Hormander1973} for oscillatory integral operators of the type $T^\lambda$ under non-homogeneous versions of the conditions H1) and H2) (and a corresponding positive-definite version of H2)); see \cite{Guth} for the details of this problem and a full historical account. In particular, the argument presented earlier in this section shows that Theorem~\ref{local smoothing theorem} implies a theorem of Stein \cite{Stein1986} in this context. For the remaining cases, the results of Bourgain \cite{Bourgain1991, Bourgain1995}, Wisewell \cite{Wisewell2005}, Bourgain--Guth \cite{Bourgain2011} and Guth, Iliopoulou and the second author \cite{Guth} suggest the $p \geq 2(n+2)/n$ numerology in the general even dimensional case and reinforce the conjectured $p \geq \bar{p}_{n, +}$ numerology in the convex case.




\bibliography{Reference}
\bibliographystyle{amsplain}

\end{document}